\documentclass[11pt,reqno]{amsart}

\usepackage{amsfonts}
\usepackage{amsmath}
\usepackage{amssymb}
\usepackage{amsthm}
\numberwithin{equation}{section}

\usepackage{anysize}
\marginsize{2cm}{2cm}{2cm}{2cm}

\usepackage{xcolor}

\usepackage{titlesec}

\titleformat{\section}
  {\normalfont\normalsize\scshape\color{black}\centering}
  {\thesection}
  {1em}
  {}

\titleformat{\subsection}[runin]
  {\normalfont\normalsize\bfseries}
  {\thesubsection}
  {1em}
  {}
  [.]
  
\usepackage[skip=3pt, indent=10pt]{parskip}

\makeatletter
\renewenvironment{proof}[1][\proofname]{\par
  \pushQED{\qed}%
  \normalfont \topsep6\p@\@plus6\p@\relax
  \trivlist
  \item[\hskip\labelsep
        \bfseries\itshape #1\@addpunct{.}]\ignorespaces
}{%
  \popQED\endtrivlist\@endpefalse
}
\makeatother

\usepackage{xcolor}

\usepackage{hyperref}
\usepackage[numbers,sort,square]{natbib}

\usepackage{mathrsfs}

\usepackage{mathtools}

\usepackage{verbatim}

\newtheorem{corollary}{Corollary}[section]

\newtheorem{lemma}{Lemma}[section]
\newtheorem{theorem}{Theorem}[section]
\newtheorem{proposition}{Proposition}[section]
\newtheorem{remark}{Remark}[section]

\newcommand{\NN}{{\mathbb N}}
\newcommand{\TT}{{\mathbb T}}
\newcommand{\ZZ}{{\mathbb Z}}
\newcommand{\RR}{{\mathbb R}}

\title[The Bramson correction on homogeneous trees]
{Logarithmic Bramson correction for Fisher--KPP equations on homogeneous trees}

\date{\today}

\author[E. Alvarez]{Edgardo Alvarez\textsuperscript{1}}
\address{Universidad del Norte, Departamento de Matem\'aticas y Estad\'istica, Barranquilla, Colombia.}
{\email{ealvareze@uninorte.edu.co}}

\author[M. Murillo]{Marina Murillo-Arcila\textsuperscript{2}}
\address{Departamento de Matem\'aticas, Facultad de Ciencias, Universidad de C\'adiz E-11519, Puerto Real, Spain.}
\thanks{\textsuperscript{2}Partially funded by PID2022-139449NB-I00, MCIN/AEI/10.13039/501100011033/FEDER,
UE}
{\email{marina.murillo@uca.es}}

\author[R. Ponce]{Rodrigo Ponce\textsuperscript{3}}
\address{Instituto de Matem\'aticas, Universidad de Talca, Casilla 747, Talca, Chile. }
\thanks{\textsuperscript{3}Partially funded by Chilean research grant FONDECYT 1251151}
{\email{rponce@inst-mat.utalca.cl}}

\author[J.C. Pozo]{Juan C. Pozo\textsuperscript{4}}
\address{Instituto de Ciencias de la Ingeniería, Universidad de O'Higgins, Rancagua, Chile.}
\thanks{\textsuperscript{4}Partially funded by Chilean research grant FONDECYT 1251151}
{\email{juan.pozo@uoh.cl}}

\begin{document}

\begin{abstract}
In this paper, we study the Fisher--KPP equation
\[
\partial_t u
=
\alpha\Delta_{\TT_{q+1}}u+\beta u(1-u),
\]
on the homogeneous \((q+1)\)-regular tree \(\TT_{q+1}\), with \(q\geq2\),
for nontrivial, compactly supported radial initial data. We first obtain an
explicit representation of the fundamental solution of the corresponding
linearized problem, together with sharp two-sided pointwise estimates. These
estimates identify the propagation threshold
\[
\beta>\alpha(\sqrt q-1)^2
\]
and determine the associated critical speed \(c_\ast\) and decay rate
\(\lambda_\ast\). In this regime, we prove that, for every fixed
\(\theta\in(0,1)\), the outermost \(\theta\)-level set satisfies
\[
\kappa_\theta(t)
=
c_\ast t-\frac{3}{2\lambda_\ast}\log t+O(1),
\qquad t\to\infty.
\]
Therefore, while the branching geometry affects the threshold for propagation
and the values of the critical parameters, the classical Bramson factor
\(3/2\) persists. Our results therefore extend classical propagation and
logarithmic-delay phenomena, previously known for one-dimensional continuous
and discrete Fisher--KPP models, to homogeneous trees.
\end{abstract}


\keywords{Fisher--KPP equation, homogeneous trees, Bramson correction,
traveling fronts, discrete heat kernel, reaction--diffusion on graphs}

\subjclass[2020]{Primary 35K57; Secondary 35B40, 35C07}
\maketitle


\section{Introduction}

\subsection*{Motivation} The Fisher--KPP equation is one of the basic models for propagation into an
unstable state.  In its classical one-dimensional form,
\[
\partial_tu=\alpha\partial_{xx}u+f(u),
\]
a Fisher--KPP nonlinearity satisfies \(f(0)=f(1)=0\), \(f'(0)>0\), and
\(0<f(s)\leq f'(0)s\) for \(s\in(0,1)\).  It is well known, see, e.g., \cite{Aronson-Weinberger-1978-Advances} that for sufficiently localized nontrivial initial data, the invasion speed, denoted by $c_\ast$, is obtained from the
linearization at \(u=0\).  Particularly, in the logistic case
\(f(u)=\beta u(1-u)\), such a speed is \(c_\ast=2\sqrt{\alpha\beta}\).

The spreading speed \(c_\ast\) provides the leading-order description of
the invasion front, but it does not determine its position with complete
accuracy. A finer asymptotic description reveals a universal second-order
effect. In the continuous Fisher--KPP equation, Bramson \cite{Bramson-1978} showed that the
front is delayed logarithmically with respect to the trajectory \(c_\ast t\).
More precisely, in the usual notation, the position of a fixed level set
has the form
\[
\kappa_\theta(t)=c_\ast t-\frac{3}{2\lambda_\ast}\log t+O(1),
\qquad t\to\infty,
\]
where \(\lambda_\ast\) is the critical exponential decay rate. Hence, the first-order law \(c_\ast t\) captures the
propagation speed, whereas the Bramson correction identifies the front
location up to a bounded error. The factor \(3/2\) is a signature of
pulled-front selection and has subsequently appeared in several local and
nonlocal settings, see, for instance, \cite{Bouin-Henderson-Ryzhik-2020-Poicare,Alfaro-Giletti-Xiao-2025-MathAnn}.

Fisher--KPP equations and, more generally, monostable reaction--diffusion
equations in discrete media have been extensively studied over the last
decades, particularly with regard to the existence and stability of
traveling fronts, spreading speeds, and propagation phenomena on lattices
and graphs; see, for instance, \cite{Zinner-Harris-Hudson-1993-JDE, Chen-Guo-2003-MathAnn, Hoffman-Holzer-2019,Guo-Wu-2008-Osaka,Fang-Li-Lou-Wang-2026-JDE} and the references therein.

In such a context, the discrete version of Bramson's logarithmic correction for the Fisher--KPP
equation on \(\ZZ\) was recently established by Besse, Faye, Roquejoffre,
and Zhang \cite{Besse-Faye-Roquejoffre-Zhang-2023-TAMS}. Their analysis
makes clear that the logarithmic correction is not a consequence of a
formal continuum approximation: it is encoded in refined asymptotics of
the discrete Green function and in a cancellation mechanism in the critical
leading edge. This naturally raises the question of what remains of the
Bramson mechanism on discrete spaces whose large-scale geometry is no longer
one-dimensional.

Homogeneous trees provide a natural framework to address the question above.  Let
\(\TT_{q+1}\) denote the homogeneous tree in which every vertex has degree
\(q+1\).  When \(q=1\), the tree \(\TT_2\) corresponds to the lattice \(\ZZ\); for \(q\geq2\),
the volume of balls grows exponentially.  Consequently, migration can
transport mass through an exponentially increasing number of vertices and
may overcome local population growth. It is worthwhile to mention that the resulting competition between
reaction and geometry has no analogue on \(\ZZ\).

Reaction--diffusion equations on infinite trees have also been studied in
several works from both theoretical and applied perspectives; see, for
instance,
\cite{Hupkes-Jukic-Stehlik-Sigler-2023,
Besse-Faye-2021-BML,
Hoffman-Holzer-2019}
and the references therein. We highlight the work of Hoffman and Holzer \cite{Hoffman-Holzer-2019}, who investigated
Fisher--KPP dynamics on homogeneous trees through a dispersion-relation
approach. Among other results, they identified a threshold separating the
propagation and extinction regimes.

The main purpose of this paper is to deepen our understanding of the
long-time behavior of Fisher--KPP dynamics on infinite homogeneous trees,
with particular emphasis on the interplay between the branching geometry
and front propagation, and on the precise asymptotic location of the
resulting invasion fronts.

Our first contribution concerns the linearized equation. By exploiting the
radial symmetry of the tree, we derive an explicit representation of its
fundamental solution in terms of modified Bessel functions, together with
sharp global pointwise estimates. These estimates recover the propagation
threshold from the behavior of the linear kernel and, in the propagative
regime, lead naturally to the critical spreading speed \(c_\ast\) and the
associated exponential decay rate \(\lambda_\ast\).

For the nonlinear equation, our main result establishes that, for every
fixed level \(\theta\in(0,1)\), the position of the front satisfies
\[
\kappa_\theta(t)
=
c_\ast t-\frac{3}{2\lambda_\ast}\log t+O(1),
\qquad t\to\infty.
\]
Thus, the linearized problem determines not only the leading-order
propagation speed, but also the critical exponential scale that enters the
logarithmic correction.

Although radial symmetry reduces the dynamics on the tree to a
one-dimensional system, the resulting problem is not the standard
Fisher--KPP equation on \(\ZZ\). The branching structure gives rise to an
asymmetric radial dynamics, while the root satisfies a different evolution
equation. A substantial part of the analysis is therefore devoted to
understanding how these geometric features affect the propagation
mechanism and to showing that the logarithmic correction is ultimately
governed by the dynamics in the leading edge of the propagating front.

\section*{Main Results}\label{Section:Main:Results}

Our first result concerns the fundamental solution of the linearized
Fisher--KPP equation around the unstable equilibrium \(u=0\),
\begin{equation}\label{Eq:linearized-Fisher-KPP-tree}
\partial_t u
=
\alpha\Delta_{\TT_{q+1}}u+\beta u,
\end{equation}
where \(\alpha,\beta>0\) and \(\Delta_{\TT_{q+1}}\) denotes the unnormalized graph
Laplacian.

For later use, we distinguish the fundamental solution centered at the
root from the general fundamental solution. Let $p_t(x,y)$ denote the heat
kernel generated by $\alpha\Delta_{\TT_{q+1}}$, and define
\[
\mathcal G(t,x,y):=e^{\beta t}p_t(x,y),
\qquad t>0,\quad x,y\in\TT_{q+1}.
\]
By homogeneity of the tree, $\mathcal G(t,x,y)$ depends only on $d(x,y)$.
The solution considered below, corresponding to the initial datum $\delta_o$,
is therefore $u(t,x)=\mathcal G(t,x,o)$.

\begin{theorem}\label{Thm:two-sided-homogeneous}
Let \(q\geq2\) and \(\alpha,\beta>0\). Let \(u\) be the fundamental
solution of \eqref{Eq:linearized-Fisher-KPP-tree}. Then there exist
constants \(C_1,C_2>0\) such that
\[
C_1
\frac{|x|+1}
{\left(1+|x|^2+4q\alpha^2t^2\right)^{3/4}}
\exp\left\{
t\Phi\left(\frac{|x|}{t}\right)
\right\}
\leq u(t,x)
\leq
C_2
\frac{|x|+1}
{\left(1+|x|^2+4q\alpha^2t^2\right)^{3/4}}
\exp\left\{
t\Phi\left(\frac{|x|}{t}\right)
\right\},
\]
for every \(t>0\) and \(x\in\TT_{q+1}\), where
\[
|x|:=d(o,x),
\]
$o$ denotes the root of the tree
and
\[
\Phi(c)
:=
\beta-\alpha(q+1)
+
\sqrt{c^2+4q\alpha^2}
+
c\log\left(
\frac{2\alpha}
{c+\sqrt{c^2+4q\alpha^2}}
\right),
\qquad c\geq0.
\]
\end{theorem}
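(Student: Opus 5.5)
The plan is to reduce to the radial problem, solve it exactly in terms of modified Bessel functions, and then apply uniform Bessel asymptotics. Since \(u(t,\cdot)\) is radial, write \(u(t,x)=v(t,n)\) with \(n=|x|\). The unnormalized Laplacian acts on radial functions as
\[
\Delta f(0)=(q+1)\bigl(f(1)-f(0)\bigr),\qquad \Delta f(n)=qf(n+1)+f(n-1)-(q+1)f(n)\quad (n\ge1).
\]
The conjugation \(v(t,n)=e^{(\beta-\alpha(q+1))t}q^{-n/2}w(t,n)\) turns the equation for \(n\ge1\) into
\[
\partial_t w=\alpha\sqrt q\,\bigl(w(n+1)+w(n-1)\bigr),
\]
which on \(\ZZ\) has kernel \(I_m(s)\) with \(s:=2\sqrt q\,\alpha t\). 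The root equation is a boundary condition at \(n=0\). I would handle it by the method of images: look for \(w\) as a combination \(\sum_{k\ge0}a_kI_{n+2k}(s)\) and fix the \(a_k\) from the root condition. The coefficients should come out geometric in \(q^{-1}\), giving a formula of the form
\[
u(t,x)=e^{(\beta-\alpha(q+1))t}q^{-n/2}\Bigl[I_n(s)-(q-1)\sum_{k\ge1}q^{-k}I_{n+2k}(s)\Bigr],
\]
with the normalization checked at \(t=0\) (where \(u=\delta_o\)). I expect the exact coefficients may need adjusting. The only structural feature needed is that the bracket equals \(I_n-I_{n+2}\) plus a tail whose terms carry an extra factor \(q^{-k}\).

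The second step is the uniform Debye-type two-sided bound, valid for all \(\nu\ge0\) and \(s>0\) with absolute constants:
\[
I_\nu(s)\asymp \frac{1}{(1+\nu^2+s^2)^{1/4}}\exp\Bigl\{\sqrt{\nu^2+s^2}-\nu\,\mathrm{arcsinh}(\nu/s)\Bigr\}.
\]
Alongside it I would use the ratio estimate \(I_{\nu+1}(s)/I_\nu(s)\asymp r\) with \(r:=s/(\nu+\sqrt{\nu^2+s^2})\); more precisely \(I_{\nu+1}/I_\nu\le r\), which is the classical Amos-type bound. A direct computation shows that \(q^{-n/2}\exp\{\sqrt{n^2+s^2}-n\,\mathrm{arcsinh}(n/s)\}\) equals \(\exp\{t\Phi(n/t)+\alpha(q+1)t-\beta t\}\) up to the prefactor, because \(q^{-n/2}s^n=(2\alpha t)^n\). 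This produces exactly the exponential factor \(e^{t\Phi(|x|/t)}\) in the statement.

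For the algebraic prefactor, set \(R:=\sqrt{n^2+s^2}\). The ratio bound gives \(I_{n+2k}\le r^{2k}I_n\), so the tail is at most \((q-1)\sum_{k\ge1}q^{-k}r^{2k}I_n\). The bracket is therefore comparable to \(I_n\cdot\bigl(1-r^2\bigr)\), up to constants depending on \(q\). For the lower bound I would use \(1-\tfrac{q-1}{q}r^2/(1-r^2/q)\ge c_q(1-r^2)\), which holds since \(r<1\). Finally,
\[
1-r^2=\frac{2n(n+R)}{(n+R)^2}=\frac{2n}{n+R}\asymp \frac{n}{R}.
\]
Combined with the factor \((1+R^2)^{-1/4}\) from the Bessel bound, this gives \((n+1)/(1+R^2)^{3/4}\), which is exactly the claimed prefactor since \(s^2=4q\alpha^2t^2\).

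The case \(n=0\) needs separate care, because there \(1-r^2\) degenerates. For it I would use the identity \(I_0-I_2=(2/s)I_1\) and the \(q^{-k}\) weights, which give the bracket \(\asymp I_1(s)/s+(1-q^{-1})\cdot\)(positive). This yields \((1+s)^{-3/2}e^{s}\) for large \(s\) and is bounded near \(s=0\). The main obstacle I expect is precisely this uniformity. First, the Bessel bounds must hold with absolute constants across all regimes: \(s\to0\), \(n\gg s\), and \(n\ll s\). Second, the lower bound on the bracket must not lose a factor when \(r\to1\), which is the regime \(n\ll s\) including \(n=0\). There the cancellation in \(I_n-I_{n+2}\) is essential, and I would exploit the exact recurrence \(I_{n}-I_{n+2}=\tfrac{2(n+1)}{s}I_{n+1}\) rather than the crude ratio bound.
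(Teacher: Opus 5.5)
Your explicit formula needs no adjustment. The root equation is equivalent to the ghost condition $w(t,-1)=q^{-1}w(t,1)$, and your coefficients satisfy it exactly. By $I_{m-1}-I_{m+1}=\frac{2m}{s}I_m$, your bracket equals
\[
\sum_{j\ge0}q^{-j}\bigl(I_{n+2j}(s)-I_{n+2j+2}(s)\bigr)
=\frac{2}{s}\sum_{j\ge0}q^{-j}(n+2j+1)\,I_{n+2j+1}(s),
\]
which is exactly the series the paper obtains via the Laplace transform. So the image method is a legitimate and more elementary derivation, once you invoke uniqueness of bounded solutions. Your lower bound for $n\ge1$, bracket $\ge I_n(1-r^2)/(1-r^2/q)$, is also correct.

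The gap is the \emph{upper} bound. The estimate $I_{n+2k}\le r^{2k}I_n$ bounds the tail from above, so it bounds the bracket only from below. The only upper bound you actually have is bracket $\le I_n$. This misses the claimed bound by the factor $\sqrt{1+n^2+s^2}/(n+1)$, which is unbounded when $n\ll s$: at $n=0$ it gives $e^{s}s^{-1/2}$ instead of $e^{s}s^{-3/2}$. So ``the bracket is therefore comparable to $I_n(1-r^2)$'' does not follow.

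A ratio estimate $I_{\nu+1}/I_\nu\asymp r$ that holds only up to multiplicative constants cannot rescue it. The bracket is a difference of two quantities that agree to relative order $1-r^2$, so you would need ratio bounds with relative error $O\bigl((\nu+1)/\sqrt{\nu^2+s^2}\bigr)$. You have the roles reversed: the cancellation is indispensable for the upper bound, while for the lower bound it is needed only at $n=0$. Your $n=0$ discussion inherits the same hole. The leftover positive term there is $q^{-1}$ times the bracket at $n=2$, and its upper bound is precisely the unresolved case.

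The repair is to work with the telescoped series from the start, since all its terms are positive.
\begin{itemize}
\item Keeping only the $j=0$ term gives bracket $\ge\frac{2(n+1)}{s}I_{n+1}(s)$ for every $n\ge0$, including $n=0$.
\item Monotonicity $I_{m+1}(s)\le I_m(s)$, together with $\sum_{j\ge0}q^{-j}(n+2j+1)\le\frac{q(q+1)}{(q-1)^2}(n+1)$, gives bracket $\le\frac{2q(q+1)}{(q-1)^2}\frac{n+1}{s}I_{n+1}(s)$.
\item The two-sided Amos-type bound $I_{n+1}(s)/I_n(s)\asymp s/\sqrt{1+n^2+s^2}$ then turns $\frac{n+1}{s}I_{n+1}$ into $\frac{n+1}{\sqrt{1+n^2+s^2}}I_n$, up to absolute constants.
\end{itemize}
The uniform Bessel bound then finishes exactly as you describe. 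This is the paper's argument (Lemma~\ref{Lemma:Bessel-series-comparison} and Corollary~\ref{Cor:Bessel-nplusone-estimate}).
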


\begin{remark}
Theorem~\ref{Thm:two-sided-homogeneous} shows that the rate function
\(\Phi\) governs the large-time behavior of the linearized solution along
ballistic trajectories \(d(o,x)\sim ct\). In particular, its sign
determines the transition between exponential growth and decay.

Of particular interest is the regime
\[
\beta>\alpha(\sqrt q-1)^2,
\]
in which the function \(\Phi\) has a unique positive zero \(c_\ast\) such that
\[
\Phi(c)>0 \quad \text{for } 0<c<c_\ast,
\qquad \text{and}\qquad 
\Phi(c)<0 \quad \text{for } c>c_\ast.
\]
Thus, \(c_\ast\) naturally emerges from the linearized problem as the
critical spreading speed.
\end{remark}

Although \(c_\ast\) is characterized as the unique positive zero of the
rate function \(\Phi\), it also admits the following equivalent characterization
through the dispersion relation. This second point of view is particularly
useful for the nonlinear analysis and for the derivation of the Bramson
logarithmic correction.

\begin{proposition}\label{prop:critical-speed}
Let \(q\geq2\) and \(\alpha,\beta>0\), and assume that
\[
\beta>\alpha(\sqrt q-1)^2.
\]
Then 
\[
c_\ast
=
\alpha\left(
e^{\lambda_\ast}-qe^{-\lambda_\ast}
\right),
\]
where \(\lambda_\ast\in(\log(\sqrt q),\infty)\) is the unique solution of
\[
\beta-\alpha(q+1)
+
\alpha\left(
e^{\lambda_\ast}+qe^{-\lambda_\ast}
\right)
=
\alpha\lambda_\ast
\left(
e^{\lambda_\ast}-qe^{-\lambda_\ast}
\right).
\] 
Equivalently,
\[
c_\ast
=
\inf_{\lambda>0}
\frac{
\beta-\alpha(q+1)
+
\alpha\left(
e^\lambda+qe^{-\lambda}
\right)
}{\lambda},
\]
and the infimum is attained uniquely at \(\lambda=\lambda_\ast\).
\end{proposition}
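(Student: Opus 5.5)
The proof rests on a Legendre-type duality between \(\Phi\) and the dispersion function
\[
D(\lambda):=\beta-\alpha(q+1)+\alpha\left(e^\lambda+qe^{-\lambda}\right),
\]
which is the exponential growth rate of \(e^{-\lambda|x|}\)-type solutions of \eqref{Eq:linearized-Fisher-KPP-tree}.

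We first compute \(\sup_{\lambda}\{D(\lambda)-c\lambda\}\) for fixed \(c\ge0\). Since \(D\) is strictly convex, the supremum over \(\lambda\in\RR\) is \(+\infty\), so the useful object is rather \(\inf_\lambda (D(\lambda)-c\lambda)\). The stationarity condition \(\alpha(e^\lambda-qe^{-\lambda})=c\) gives \(e^{\lambda}=\bigl(c+\sqrt{c^2+4q\alpha^2}\bigr)/(2\alpha)\) and \(qe^{-\lambda}=\bigl(\sqrt{c^2+4q\alpha^2}-c\bigr)/(2\alpha)\). Hence \(\alpha(e^\lambda+qe^{-\lambda})=\sqrt{c^2+4q\alpha^2}\), and
\[
\min_{\lambda\in\RR}\bigl(D(\lambda)-c\lambda\bigr)=\Phi(c).
\]
Call this minimizer \(\lambda(c)\). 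The map \(c\mapsto\lambda(c)\) is a strictly increasing bijection from \([0,\infty)\) onto \([\log\sqrt q,\infty)\), since \(\lambda(0)=\log\sqrt q\). By the envelope theorem, \(\Phi'(c)=-\lambda(c)<0\) for \(c\ge0\). This can also be checked directly: differentiating \(\Phi\), the square-root terms cancel and leave \(\log\bigl(2\alpha/(c+\sqrt{c^2+4q\alpha^2})\bigr)\).

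Next we locate the zero of \(\Phi\). We have \(\Phi(0)=\beta-\alpha(q+1)+2\alpha\sqrt q=\beta-\alpha(\sqrt q-1)^2>0\) under the hypothesis, and \(\Phi(c)\to-\infty\) because \(\Phi'(c)=-\lambda(c)\to-\infty\). Strict monotonicity then gives a unique zero \(c_\ast>0\), which is the \(c_\ast\) of the Remark. Set \(\lambda_\ast:=\lambda(c_\ast)\). Then \(\lambda_\ast>\log\sqrt q\), and \(c_\ast=\alpha(e^{\lambda_\ast}-qe^{-\lambda_\ast})=D'(\lambda_\ast)\). Moreover \(\Phi(c_\ast)=0\) reads \(D(\lambda_\ast)=c_\ast\lambda_\ast=\lambda_\ast D'(\lambda_\ast)\), which is exactly the stated transcendental equation.

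Uniqueness of \(\lambda_\ast\) in \((\log\sqrt q,\infty)\) comes from the auxiliary function \(g(\lambda):=\lambda D'(\lambda)-D(\lambda)\). Its derivative is \(g'(\lambda)=\lambda D''(\lambda)>0\) for \(\lambda>0\), so \(g\) is strictly increasing there. Also \(g(\log\sqrt q)=-D(\log\sqrt q)=-\Phi(0)<0\), since \(D'(\log\sqrt q)=0\). Finally \(g\to+\infty\), because \(\lambda D'(\lambda)\) grows like \(\lambda\alpha e^\lambda\) while \(D(\lambda)\) grows only like \(\alpha e^\lambda\). So \(g\) has exactly one zero on \((\log\sqrt q,\infty)\). (Note that \(g\) may also vanish on \((0,\log\sqrt q)\) when \(\beta<\alpha(q+1)\); the restriction of the interval in the statement is what rules such a zero out.)

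For the variational formula, let \(h(\lambda)=D(\lambda)/\lambda\) on \((0,\infty)\). Then \(h'(\lambda)=g(\lambda)/\lambda^2\). The main obstacle is the sign of \(g\) on \((0,\log\sqrt q]\), where uniqueness of a zero is not immediate. Here we use that \(g\) is increasing on \((0,\infty)\) and \(g(\log\sqrt q)<0\), so \(g<0\) on the whole interval \((0,\lambda_\ast)\) and \(g>0\) on \((\lambda_\ast,\infty)\). Any zero of \(g\) in \((0,\log\sqrt q)\) would contradict monotonicity together with \(g(\log\sqrt q)<0\), so this settles the question. Consequently \(h\) decreases strictly and then increases strictly, its infimum over \(\lambda>0\) is attained only at \(\lambda_\ast\), and
\[
h(\lambda_\ast)=\frac{D(\lambda_\ast)}{\lambda_\ast}=D'(\lambda_\ast)=c_\ast.
\]
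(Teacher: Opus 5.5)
Your proof is correct and is essentially the paper's argument. Your $g$ is the paper's $-F$, and your identity $\min_{\lambda}\bigl(D(\lambda)-c\lambda\bigr)=\Phi(c)$, evaluated at $c=D'(\lambda)$, is exactly the paper's substitution $F(\lambda)=\Phi(D'(\lambda))$; you simply run it from $c_\ast$ to $\lambda_\ast$ rather than the other way around.

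One correction: your parenthetical claim that $g$ may vanish on $(0,\log\sqrt q)$ when $\beta<\alpha(q+1)$ is false. Indeed $g$ is strictly increasing on $(0,\infty)$ with $g(0^+)=-\beta<0$, so, as your own final paragraph shows, there is no zero there. The restriction to $(\log\sqrt q,\infty)$ in the statement is not what rules one out, and the sign of $g$ on $(0,\log\sqrt q]$ is not really an obstacle.
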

\begin{remark}
Proposition~\ref{prop:critical-speed} also holds for \(q=1\), corresponding
to the homogeneous tree \(\TT_2\simeq\ZZ\). In this case the condition
\[
\beta>\alpha(\sqrt q-1)^2
\]
reduces to \(\beta>0\), and the same characterization holds with
\(\lambda_\ast\in(0,\infty)\) and \(q=1\) in the formulas above. Hence,
the proposition recovers the usual critical-speed characterization for
the Fisher--KPP equation on \(\ZZ\).
\end{remark}

We are now in a position to formulate our main result concerning the
nonlinear Fisher--KPP equation
\begin{equation}\label{Eq:FKPP:on:Trees}
\partial_t u
=
\alpha\Delta_{\TT_{q+1}}u
+
\beta u(1-u),
\qquad
\alpha,\beta>0.
\end{equation}

For each \(\theta\in(0,1)\), and for sufficiently large \(t\), we denote by
\[
\kappa_\theta(t)
:=
\sup\left\{
d(o,x):
x\in\TT_{q+1},\
u(t,x)\geq\theta
\right\},
\]
the position of the corresponding level set.

\begin{theorem}\label{Thm:Bramson}
Let \(q\geq2\) and \(\alpha,\beta>0\), and assume that
\[
\beta>\alpha(\sqrt q-1)^2.
\]
Let \(u\) be the solution of \eqref{Eq:FKPP:on:Trees} corresponding to a
nontrivial, compactly supported radial initial datum \(u_0\) satisfying
\[
0\leq u_0\leq1.
\]
Then, for every \(\theta\in(0,1)\),
\[
\kappa_\theta(t)
=
c_\ast t
-
\frac{3}{2\lambda_\ast}\log t
+
O(1),
\qquad
t\to\infty,
\]
where \(c_\ast\) and \(\lambda_\ast\) are the critical quantities
introduced in Proposition \ref{prop:critical-speed}.
\end{theorem}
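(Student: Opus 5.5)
The plan is to reduce the problem to a one-dimensional radial lattice equation and then adapt the strategy of Besse--Faye--Roquejoffre--Zhang for \(\ZZ\), with the tree geometry entering only through an exponential conjugation. For radial data, writing \(u(t,x)=U(t,n)\) with \(n=|x|\), the equation becomes
\[
\partial_t U(t,n)=\alpha\big(qU(t,n+1)+U(t,n-1)-(q+1)U(t,n)\big)+\beta U(1-U),\qquad n\ge1,
\]
together with \(\partial_t U(t,0)=\alpha(q+1)(U(t,1)-U(t,0))+\beta U(1-U)\) at the root. Since \(0\le u_0\le1\) is compactly supported and \(\beta>\alpha(\sqrt q-1)^2\), the comparison principle on the tree gives \(0\le U\le1\). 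The linear bound \(U\le \mathcal G * u_0\) from Theorem~\ref{Thm:two-sided-homogeneous} then gives spreading at speed at most \(c_\ast\). Spreading with speed at least \(c_\ast-\varepsilon\) (in fact invasion: \(U\to1\) uniformly on \(\{n\le (c_\ast-\varepsilon)t\}\)) follows from the growth \(\Phi(c)>0\) for \(c<c_\ast\) via standard compactly supported subsolutions. Hence it suffices to locate the front up to \(O(1)\) in the leading edge.

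\textbf{Step 1 (moving frame and conjugation).} Set \(m=n-c_\ast t\) and \(U=e^{-\lambda_\ast m}V\). By the dispersion relation of Proposition~\ref{prop:critical-speed}, the linear part for \(V\) becomes
\[
\partial_tV=\alpha qe^{-\lambda_\ast}V(m+1)+\alpha e^{\lambda_\ast}V(m-1)-\big(\alpha qe^{-\lambda_\ast}+\alpha e^{\lambda_\ast}\big)V+c_\ast\partial_mV,
\]
and the criticality condition makes the drift cancel at first order. The result is a centered random walk generator with zero exponential growth and diffusivity \(\sigma^2=\alpha(qe^{-\lambda_\ast}+e^{\lambda_\ast})\), up to the discrete/continuous-shift mismatch, which one handles as in the \(\ZZ\) case. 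The asymmetry of the tree is thus absorbed. The nonlinearity \(-\beta U^2\) turns into \(-\beta e^{-\lambda_\ast m}V^2\), which is negligible far to the right.

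\textbf{Step 2 (Dirichlet problem in the leading edge).} On the domain \(m\ge -\frac{3}{2\lambda_\ast}\log t + O(1)\), or more conveniently \(m\ge X(t)\) with a moving boundary, I would study the linear \(V\)-equation with Dirichlet condition. The Dirichlet heat kernel of a centered lattice walk on a half-line decays like \(m\,m'\,t^{-3/2}\). This \(t^{-3/2}\) decay, versus \(t^{-1/2}\) without a boundary, is the origin of the \(\frac32\): indeed \(\max U\sim1\) at \(m\approx-\frac{3}{2\lambda_\ast}\log t\). The precise factor \(\frac{(|x|+1)}{(1+|x|^2+\cdots)^{3/4}}\) in Theorem~\ref{Thm:two-sided-homogeneous} is the analogous radial effect and can be used for the upper bound directly. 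The two bounds are then built as follows.

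\textbf{Upper bound.} I would use the explicit kernel \(\mathcal G\) together with a supersolution on \(\{m\ge X(t)\}\) with \(X(t)=-\frac{3}{2\lambda_\ast}\log t+C\). It is built from \(e^{-\lambda_\ast m}(m-X(t))t^{-3/2}\) times a constant, corrected by a small term absorbing the lattice errors and the \(\dot X=O(1/t)\) drift. Since \(U\le1\) to the left, this gives \(U(t,n)\le Ce^{-\lambda_\ast(m-X)}\) for \(m\ge X\), hence \(\kappa_\theta\le c_\ast t-\frac{3}{2\lambda_\ast}\log t+C\).

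\textbf{Lower bound.} This is the main obstacle, and here I would follow Besse--Faye--Roquejoffre--Zhang. The first step is to show that the Dirichlet solution of the linearized \(V\)-problem, started from mass supplied by the bulk, satisfies \(V(t,m)\ge c\,(m-X)t^{-3/2}\cdot t^{3/2}\) on the diffusive window \(m-X\sim\sqrt t\). The cancellation mechanism that balances the \(t^{-3/2}\) loss against the logarithmic shift requires sharp (not just two-sided) Green-function asymptotics of the conjugated walk, including a first-order correction term. One then builds a subsolution \(V_-\) equal to this linear Dirichlet solution minus a quadratic correction that controls \(\beta e^{-\lambda_\ast m}V^2\). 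The correction is small because \(e^{-\lambda_\ast m}\) is small for \(m-X\ge A\). Gluing \(V_-\) to the invaded region, where \(U\ge1-\varepsilon\), yields \(U\ge\theta\) at \(m=X(t)-C\).

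\textbf{Root and finite-\(n\) effects.} The root equation and the small-\(n\) region differ from a translation-invariant walk. They sit at \(m\approx-c_\ast t\), deep in the invaded zone, so they only affect the solution through the condition \(U\to1\) there. Since comparison is done on the full tree, this is harmless.
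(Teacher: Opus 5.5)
Your radial reduction and critical conjugation match the paper (your $V$ is its $v_n$, and $\sigma^2=s_\ast$). However, the two places where the factor $\frac32$ is actually produced do not work as proposed.

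\textbf{Upper bound.} The prefactor in Theorem~\ref{Thm:two-sided-homogeneous} is not a Bramson factor. For $|x|=c_\ast t+y$ with $|y|\lesssim\sqrt t$ it is $\asymp t^{-1/2}$, and $t\Phi(|x|/t)=-\lambda_\ast y+O(1)$ by Lemma~\ref{lem:Phi-expansion-critical}. So the linear kernel only gives $\kappa_\theta(t)\le c_\ast t-\frac{1}{2\lambda_\ast}\log t+O(1)$; the $t^{-3/2}$ in that formula appears only for $|x|=o(t)$, not along the front.

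Your explicit barrier is not a supersolution either. In the normalized variable $V_n=t^{3/2}v_n$ of \eqref{eq:V-eqn-new} it is $\overline V_n=K\bigl(z_n(t)-z_0\bigr)$. Since $\mathscr L_\ast$ maps the sequence $(n)$ to the constant $b_\ast-a_\ast=-c_\ast$,
\[
\partial_t\overline V_n-\mathscr L_\ast\overline V_n-\frac{3}{2t}\overline V_n
=\frac{3K}{2t}\Bigl(\frac{1}{\lambda_\ast}-\bigl(z_n(t)-z_0\bigr)\Bigr),
\]
which is negative for $z_n-z_0>1/\lambda_\ast$. The absorption $\beta e^{-\lambda_\ast z_n}\overline V_n^2$ cannot compensate once $z_n\ge 2\lambda_\ast^{-1}\log t$, say. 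The obstruction is the $\frac{3}{2t}V$ term, i.e.\ the $t^{-3/2}$ decay you impose on a profile with no Gaussian cutoff; it is not lattice errors or $\dot X$. You need a self-similar profile at scale $\sqrt t$. The paper uses $K_+[\xi_+w+p^{+}+\Xi]$, built on the odd-data dipole $w$ of $\mathscr L_\ast$, on the half-line $y_n\ge-t^\delta$, where $u\le1$ supplies the boundary datum $v\le e^{-\lambda_\ast t^\delta}$ (Lemma~\ref{lem:upper-barrier-new}).

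\textbf{Reaching every level $\theta$.} Even granting your leading-edge lower bound, it only gives $u\ge c_A$ near $z_n(t)=A$ for some small $c_A>0$, hence $\kappa_\theta(t)\ge m(t)-C$ only for $\theta<c_A$. For $\theta$ near $1$ you must control $u$ where $z=O(1)$. There the absorption $\beta e^{-\lambda_\ast z}V^2$ is of order one, so $u$ is governed by the full nonlinear front equation, not by a linear profile plus a small correction.

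``Gluing to the invaded region'' does not bridge this. The invasion you have, on $n\le(c_\ast-\varepsilon)t$, sits a distance of order $\varepsilon t$ behind $m(t)$, with no information in between.

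The missing ingredient is the critical traveling front:
\begin{itemize}
\item Lemma~\ref{lem:critical-front-new} gives $U_\ast$ with tail $U_\ast(z)\sim ze^{-\lambda_\ast z}$, coming from the double zero of $D(\lambda)-c_\ast\lambda$ at $\lambda_\ast$. This has exactly the leading-edge shape $V\asymp z$.
\item The functions $\Psi_n^B=e^{\lambda_\ast z_n}U_\ast(z_n+B)$ solve \eqref{eq:V-eqn-new} up to the signed residual $\frac{3}{2\lambda_\ast t}e^{\lambda_\ast z_n}U_\ast'(z_n+B)\le0$.
\item Compare in the strip $|z_n(t)|\le t^{\eta_0}$ (Proposition~\ref{prop:matching-new}). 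The right boundary data come from Proposition~\ref{prop:leading-edge-new}, which is where the dipole lower barrier on $y_n\ge t^\delta$ is used, since there $e^{-\lambda_\ast y}$ is superpolynomially small. On the left, $V_n\le e^{\lambda_\ast z_n}$ suffices.
\end{itemize}
This yields $U_\ast(z+B_+)-e^{-\lambda_\ast z}\varepsilon(t)\le u\le U_\ast(z+B_-)+e^{-\lambda_\ast z}\varepsilon(t)$, from which both bounds follow for every $\theta\in(0,1)$. Without this matching, or a substitute such as eventual radial monotonicity plus a local invasion lemma on the tree, your argument does not reach the stated $O(1)$ localization for all $\theta$.
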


\begin{remark}
It is worth noting that Theorem~\ref{Thm:Bramson} does not address the case
\[
\beta\leq\alpha(\sqrt q-1)^2.
\]

In this regime, propagation does not occur. By the comparison principle, the
nonlinear solution is bounded above by the solution of the corresponding
linearized equation, which converges uniformly to zero as
\(t\to\infty\). Therefore, the analysis of the front position is relevant
only in the propagative regime
\[
\beta>\alpha(\sqrt q-1)^2.
\]
\end{remark}

\subsection*{Organization of the paper}
The paper is organized as follows. Section~\ref{Section:FKPP:on:T:q+1}
introduces the Fisher--KPP equation on the homogeneous tree
\(\TT_{q+1}\), the graph Laplacian, and its radial formulation.

Section~\ref{sec:linear-dynamics} is devoted to the linearized problem.
We derive an explicit representation of the fundamental solution in
terms of modified Bessel functions, establish sharp global pointwise
estimates, identify the propagation threshold, and characterize the
critical pair \((c_\ast,\lambda_\ast)\).

Section~\ref{sec:nonlinear-dynamics} treats the nonlinear problem.
We first discuss extinction below the threshold and then analyze the
critical leading edge through an exponential conjugation and a tilted
Green function. We derive the dipole scale, transfer it to the
nonlinear solution by moving-domain comparison arguments, analyze the
critical traveling front, and finally obtain the Bramson logarithmic
correction.

The appendices contain the auxiliary Bessel-function estimates, the
well-posedness and comparison results for the nonlinear problem, and
the moving-domain comparison principles and barrier constructions used
in the leading-edge analysis.

\section{Fisher--KPP equation on $\mathbb{T}_{q+1}$}{\label{Section:FKPP:on:T:q+1}}

Before beginning our analysis, we explain how a Fisher--KPP equation on a
homogeneous tree arises naturally from a population model based on random
walks. The model combines migration, reproduction, and local competition.

Let \(u(t,x)\) denote the population density at the vertex \(x\) at
time \(t\). We assume that each individual moves along every incident edge
at rate \(\alpha>0\). Since every vertex has \(q+1\) neighbors, the total
jump rate is \(\alpha(q+1)\), and conditioned on a jump each neighboring
vertex is chosen with probability \(1/(q+1)\). Therefore, over a short time
interval \(h>0\),
\[
u(t+h,x)
=
u(t,x)
+
\alpha h\sum_{y\sim x}\bigl(u(t,y)-u(t,x)\bigr)
+
o(h).
\]
Dividing by \(h\) and letting \(h\to0\) gives the migration equation
\[
\partial_tu(t,x)
=
\alpha\sum_{y\sim x}\bigl(u(t,y)-u(t,x)\bigr).
\]

This motivates the unnormalized graph Laplacian
\[
\Delta_{\TT_{q+1}}f(x)
:=
\sum_{y\sim x}\bigl(f(y)-f(x)\bigr),
\]
where \(y\sim x\) means that \(d(x,y)=1\). With this convention,
\(\Delta_{\TT_{q+1}}\) is the infinitesimal generator of the
continuous-time symmetric random walk and is nonpositive on
\[
\ell^2\left(\mathbb{T}_{q+1}\right)
:=
\left\{
f:\mathbb{T}_{q+1}\to\mathbb{C}
\;: 
\sum_{x\in\mathbb{T}_{q+1}} |f(x)|^2<\infty
\right\}\, \mbox{equipped with}\,\, \|f\|_{\ell^2(\mathbb{T}_{q+1})}
:=
\left(
\sum_{x\in\mathbb{T}_{q+1}} |f(x)|^2
\right)^{1/2}.
\]

At each vertex we also assume logistic growth with intrinsic rate
\(\beta>0\), normalized carrying capacity \(1\), and reaction term
\(\beta u(1-u)\). Combining migration and local growth yields
\begin{equation}
\label{eq:Fisher-KPP-tree}
\partial_t u(t,x)
=
\alpha\Delta_{\TT_{q+1}}u(t,x)
+
\beta u(t,x)\bigl(1-u(t,x)\bigr),
\qquad
\alpha,\beta>0.
\end{equation}
Thus \eqref{eq:Fisher-KPP-tree} is the natural analogue on
\(\TT_{q+1}\) of the classical Fisher--KPP equation on the real line.
The essential geometric difference is that the number of vertices at
distance \(n\) from a fixed origin grows exponentially with \(n\). This
volume growth competes with reaction and migration and is responsible for
the propagation--extinction dichotomy discussed below.

\begin{remark}[Choice of Laplacian convention]
We adopt the unnormalized graph Laplacian in
\eqref{eq:Fisher-KPP-tree} for two main reasons. First, this convention
agrees with that used by Hoffman and Holzer
\cite{Hoffman-Holzer-2019}. Second, it is the natural generator associated
with the migration mechanism described above, in which an individual
moves along each incident edge at rate $\alpha$. Moreover, when $q=1$,
the homogeneous tree $\TT_2$ is naturally identified with $\mathbb Z$,
and our convention reduces to the standard discrete Laplacian
\[
\Delta_{\TT_2}f(n)
=
f(n-1)-2f(n)+f(n+1),
\]
as used, for instance, in
\cite{Besse-Faye-Roquejoffre-Zhang-2023-TAMS}.

A different convention, frequently adopted in the literature on random
walks and analysis on graphs (see, e.g.,
\cite{Cowling-Meda-Setti-2000-TAMS}), is to use the nonnegative {\em normalized}
Laplacian
\[
\mathcal L_{\TT_{q+1}}f(x)
:=
f(x)
-
\frac{1}{q+1}
\sum_{y\sim x}f(y).
\]
Since $\TT_{q+1}$ is $(q+1)$-regular, the normalized and unnormalized
Laplacians differ only by a constant factor:
\[
-\Delta_{\TT_{q+1}}
=
(q+1)\mathcal L_{\TT_{q+1}}.
\]
Therefore, the Fisher--KPP equation
\[
\partial_tu
=
\alpha\Delta_{\TT_{q+1}}u
+
\beta u(1-u),
\]
may equivalently be written in the normalized convention as
\[
\partial_tu
+
\alpha_{\mathrm{norm}}
\mathcal L_{\TT_{q+1}}u
=
\beta u(1-u),
\qquad
\alpha_{\mathrm{norm}}=(q+1)\alpha.
\]

Thus, on a homogeneous tree, the choice between these two conventions is
essentially a matter of normalization. Results formulated using one
Laplacian can be translated directly into the other through the
substitution
\[
\alpha_{\mathrm{norm}}=(q+1)\alpha.
\]
Throughout this paper, we use the unnormalized convention in order to
remain consistent with the random-walk interpretation above and with the
standard one-dimensional discrete Fisher--KPP equation when $q=1$.
\end{remark}

\section{Linearized Dynamics: Fundamental Solution, Pointwise Estimates, and Critical Parameters}
\label{sec:linear-dynamics}

We recall that throughout this section we consider the linearized equation
\begin{equation}\label{Eq:linearized-Fisher-KPP-tree:2}
\begin{cases}
\partial_t u(t,x)
=
\alpha \Delta_{\TT_{q+1}}u(t,x)
+\beta u(t,x),
& t>0,\quad x\in\TT_{q+1},
\\[1mm]
u(0,x)=\delta_o(x),
& x\in\TT_{q+1}.
\end{cases}
\end{equation}

In order to prove Theorem~\ref{Thm:two-sided-homogeneous}, we first derive
an explicit representation of the fundamental solution of
\eqref{Eq:linearized-Fisher-KPP-tree:2}. This representation is of independent
interest and will serve as the starting point for the sharp pointwise
estimates used throughout the subsequent analysis.

\begin{proposition}
\label{prop:fundamental-solution-homogeneous}
Let \(q\geq2\) and \(\alpha,\beta>0\). The solution of
\eqref{Eq:linearized-Fisher-KPP-tree} depends only on the distance and is given by
\begin{equation}
\label{eq:fundamental-solution-homogeneous-positive}
u(t,x)
=
\frac{
q^{-(n+1)/2}e^{(\beta-\alpha(q+1))t}
}{
\alpha t
}
\sum_{j=0}^{\infty}
q^{-j}(n+2j+1)
I_{n+2j+1}
\left(2\alpha\sqrt q\,t\right),
\end{equation}
where $n=d(o,x)$ and  \(I_m\) denotes the modified Bessel function of the first kind.
\end{proposition}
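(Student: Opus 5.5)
The plan is to reduce to the radial system, conjugate away the branching asymmetry, and then verify directly that the proposed series solves the resulting system. The last step rests on the Bessel recurrence
\[
I_{m-1}(z)-I_{m+1}(z)=\tfrac{2m}{z}\,I_m(z).
\]

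\textbf{Radial reduction.} Since $\Delta_{\TT_{q+1}}$ is a bounded operator on $\ell^\infty(\TT_{q+1})$, the problem \eqref{Eq:linearized-Fisher-KPP-tree:2} is a linear ODE in a Banach space. It therefore has a unique bounded solution on compact time intervals. Every automorphism of the tree fixing $o$ preserves $\delta_o$ and commutes with $\Delta_{\TT_{q+1}}$, so by uniqueness $u(t,x)=u_n(t)$ with $n=|x|$. Counting neighbours (one at distance $n-1$ and $q$ at distance $n+1$ when $n\ge1$, and $q+1$ at distance $1$ when $n=0$) gives the system
\[
u_n'=\alpha(u_{n-1}+qu_{n+1})+(\beta-\alpha(q+1))u_n\quad(n\ge1),
\]
\[
u_0'=\alpha(q+1)u_1+(\beta-\alpha(q+1))u_0,
\]
with initial data $u_n(0)=\delta_{n0}$.

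\textbf{Conjugation.} Substitute $u_n=q^{-n/2}e^{(\beta-\alpha(q+1))t}v_n$ and write $a:=\alpha\sqrt q$. The system becomes
\[
v_n'=a(v_{n-1}+v_{n+1})\quad(n\ge1),
\qquad
v_0'=\frac{a(q+1)}{q}\,v_1,
\qquad
v_n(0)=\delta_{n0}.
\]
Multiplying the claimed formula by $q^{n/2}e^{-(\beta-\alpha(q+1))t}$ shows the target is
\[
v_n=\frac1{at}\sum_{j\ge0}q^{-j}(n+2j+1)I_{n+2j+1}(2at).
\]
Set $g_m(t):=I_m(2at)$, with $I_{-m}=I_m$. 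The recurrence above, at $z=2at$ and index $m=n+2j+1$, turns the target into the telescoped form
\[
v_n=\sum_{j\ge0}q^{-j}\bigl(g_{n+2j}-g_{n+2j+2}\bigr).
\]
Convergence, and termwise differentiation locally uniformly in $t$, follow from $I_m(z)\le (z/2)^m e^{z}/m!$.

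\textbf{Verification.} Each $g_m$ satisfies $g_m'=a(g_{m-1}+g_{m+1})$, by the identity $2I_m'=I_{m-1}+I_{m+1}$. Any absolutely convergent combination of shifts $\sum_k c_k g_{n+k}$ with coefficients independent of $n$ therefore satisfies the interior equation for all $n\ge1$; for $n\ge1$ all indices are $\ge1$, so no reflection intervenes. The initial condition holds because $g_m(0)=\delta_{m0}$, so $v_n(0)=\delta_{n0}-\delta_{n+2,0}\cdot(\dots)=\delta_{n0}$.

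\textbf{Main obstacle: the root equation.} What remains, and what I expect to be the main obstacle, is the root condition. Here the reflection $g_{-1}=g_1$ enters. I would compute both sides and compare coefficients of each $g_{2k+1}$:
\[
v_0'=a\sum_j q^{-j}\bigl(g_{2j-1}+g_{2j+1}-g_{2j+1}-g_{2j+3}\bigr)
=a\sum_j q^{-j}\bigl(g_{2j-1}-g_{2j+3}\bigr),
\]
\[
v_1=\sum_j q^{-j}\bigl(g_{2j+1}-g_{2j+3}\bigr).
\]
On the left, using $g_{-1}=g_1$, the coefficient of $g_1$ is $a(1+q^{-1})$ and the coefficient of $g_{2k+1}$ for $k\ge1$ is $a(q^{-k-1}-q^{-k+1})$. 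On the right, the coefficients of $\tfrac{a(q+1)}{q}v_1$ are $\tfrac{a(q+1)}{q}$ for $g_1$ and $\tfrac{a(q+1)}{q}\bigl(q^{-k}-q^{-k+1}\bigr)$ for $g_{2k+1}$. Both match, since $\tfrac{q+1}{q}(q^{-k}-q^{-k+1})=q^{-k-1}-q^{-k+1}$.

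With the root equation verified, uniqueness from the radial reduction identifies $u$ with the stated formula \eqref{eq:fundamental-solution-homogeneous-positive}.
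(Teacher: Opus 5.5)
Your proof is correct, but it runs in the opposite direction from the paper's.

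\textbf{The paper's route.} The paper derives the formula. It Laplace-transforms the radial system in time. It uses the a priori bound $0\le h_n\le e^{\beta t}$ (Lemma~\ref{Lemma:radial-linear-bound}) to select the decaying root $r(\lambda)$ of the bulk recurrence, and uses the root equation to obtain $H_n=\alpha^{-1}r^{n+1}/(1-r^2)$. It then expands $1/(1-r^2)$ geometrically and inverts term by term via the transform of $I_m(at)/t$ (Lemma~\ref{Lemma:Laplace-Bessel-over-t}). This explains where the weights $q^{-j}$ and the factor $(n+2j+1)/t$ come from.

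\textbf{Your route.} You take the formula as given and verify it. The conjugation $u_n=q^{-n/2}e^{(\beta-\alpha(q+1))t}v_n$ symmetrizes the bulk operator. The recurrence $mI_m=\frac z2(I_{m-1}-I_{m+1})$ telescopes the series into translates $g_m$, which removes the $1/t$ singularity and makes the initial condition a one-line check. After that, only $2I_m'=I_{m-1}+I_{m+1}$ and a coefficient comparison at the root remain. I checked that comparison, including the reflection $g_{-1}=g_1$, and it is right.

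\textbf{Trade-offs.} Your argument is more elementary: it needs no Laplace inversion, no Tonelli step, and no integral identity. It also yields the image-type form $v_n=g_n-(q-1)\sum_{j\ge1}q^{-j}g_{n+2j}$ as a by-product. However, it can only confirm the formula, not discover it.

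\textbf{One point to make explicit.} You should say clearly in which class you invoke uniqueness. Coordinatewise solutions of this infinite system are not unique without a growth condition. For example, take $h_0$ smooth and flat at $t=0$ and solve for $h_1,h_2,\dots$ recursively; this gives a nontrivial solution with zero data. So before identifying your candidate with $u$, note that it is bounded in $n$ uniformly for $t$ in compact sets. This is immediate from $0\le I_m(z)\le I_0(z)\le e^z$ and $\sum_j q^{-j}<\infty$, which give $|v_n(t)|\le \frac{2q}{q-1}e^{2at}$ and hence $u(t,\cdot)\in\ell^\infty(\TT_{q+1})$. A Gronwall argument then gives uniqueness in that class. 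The same bound also justifies locally uniform termwise differentiation more directly than the factorial estimate.
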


\begin{proof}
Since \(\Delta_{\TT_{q+1}}\) is invariant under graph automorphisms and
the initial datum \(\delta_o\) is invariant under every automorphism
fixing \(o\), uniqueness implies that the fundamental solution is invariant
under the stabilizer of \(o\). This stabilizer acts transitively on every
geodesic sphere
\[
S_n(o):=\{x\in\TT_{q+1}:d(o,x)=n\}.
\]
Therefore, \(u(t,x)\) depends only on \(n=d(o,x)\), and we may define
\[
h_n(t):=u(t,x),
\qquad
d(o,x)=n,
\qquad
n\in\NN_0.
\]

For \(n\geq1\), every vertex at distance \(n\) from \(o\) has one
neighbor at distance \(n-1\) and \(q\) neighbors at distance \(n+1\).
Consequently,
\[
\partial_t h_n(t)
=
\alpha h_{n-1}(t)
+\alpha qh_{n+1}(t)
+\bigl(\beta-\alpha(q+1)\bigr)h_n(t).
\]
At the root, all \(q+1\) neighboring vertices belong to the first
geodesic sphere, and hence
\[
\partial_t h_0(t)
=
\alpha(q+1)h_1(t)
+\bigl(\beta-\alpha(q+1)\bigr)h_0(t).
\]
Thus, the sequence \(\{h_n\}_{n\in\NN_0}\) satisfies the infinite system
\begin{equation}\label{Radial:System}
\begin{cases}
\partial_t h_0(t)
=
\alpha(q+1)h_1(t)
+\bigl(\beta-\alpha(q+1)\bigr)h_0(t),
&t>0,
\\[1mm]
\partial_t h_n(t)
=
\alpha h_{n-1}(t)
+\alpha qh_{n+1}(t)
+\bigl(\beta-\alpha(q+1)\bigr)h_n(t),
&t>0,\quad n\geq1,
\\[1mm]
h_n(0)=\delta_{n0},
&n\in\NN_0.
\end{cases}
\end{equation}

For \(\lambda>\beta\), define
\[
H_n(\lambda)
:=
\int_0^\infty e^{-\lambda t}h_n(t)\,dt.
\]
By Lemma~\ref{Lemma:radial-linear-bound} below,
\[
0\leq h_n(t)\leq e^{\beta t},
\qquad
t\geq0,\quad n\in\NN_0.
\]
Therefore, \(H_n(\lambda)\) is well defined for every \(\lambda>\beta\),
and
\[
0\leq H_n(\lambda)
\leq
\frac{1}{\lambda-\beta}.
\]
Taking the Laplace transform of the equation at the root, we obtain
\[
\lambda H_0(\lambda)-1
=
\alpha(q+1)H_1(\lambda)
+\bigl(\beta-\alpha(q+1)\bigr)H_0(\lambda),
\]
and therefore
\begin{equation}
\label{eq:Laplace-root-unnormalized}
\bigl(\lambda-\beta+\alpha(q+1)\bigr)H_0(\lambda)
-\alpha(q+1)H_1(\lambda)
=1.
\end{equation}
For \(n\geq1\), we similarly obtain
\begin{equation}
\label{eq:Laplace-bulk-unnormalized}
qH_{n+1}(\lambda)
-\frac{\lambda-\beta+\alpha(q+1)}{\alpha}H_n(\lambda)
+H_{n-1}(\lambda)
=0.
\end{equation}

By Lemma~\ref{Lemma:Solution:Recurrence},
\[
H_n(\lambda)=H_0(\lambda)r(\lambda)^n,
\]
where
\[
r(\lambda)
=
\frac{
\lambda-\beta+\alpha(q+1)
-
\sqrt{\bigl(\lambda-\beta+\alpha(q+1)\bigr)^2
-4\alpha^2q}
}
{2\alpha q}.
\]
In particular,
\[
H_1(\lambda)=r(\lambda)H_0(\lambda).
\]

Substituting this identity into
\eqref{eq:Laplace-root-unnormalized}, we obtain
\[
\left[
\lambda-\beta+\alpha(q+1)
-\alpha(q+1)r(\lambda)
\right]H_0(\lambda)
=1.
\]
Since \(r(\lambda)\) satisfies the characteristic equation, we have that
\[
\lambda-\beta+\alpha(q+1)
=
\alpha\left(
qr(\lambda)+\frac{1}{r(\lambda)}
\right).
\]
Consequently,
\[
\begin{aligned}
\lambda-\beta+\alpha(q+1)
-\alpha(q+1)r(\lambda)
&=
\alpha\left(
qr(\lambda)+\frac{1}{r(\lambda)}
-(q+1)r(\lambda)
\right)=
\alpha\frac{1-r(\lambda)^2}{r(\lambda)}.
\end{aligned}
\]
It follows that
\[
H_0(\lambda)
=
\frac{1}{\alpha}
\frac{r(\lambda)}{1-r(\lambda)^2},
\]
and hence
\begin{equation}
\label{eq:Hn-geometric-unnormalized}
H_n(\lambda)
=
\frac{1}{\alpha}
\frac{r(\lambda)^{n+1}}{1-r(\lambda)^2}.
\end{equation}
Since \(0<r(\lambda)<1\), we finally obtain
\[
H_n(\lambda)
=
\frac{1}{\alpha}
\sum_{j=0}^\infty
r(\lambda)^{n+2j+1}.
\]
We now apply Lemma~\ref{Lemma:Laplace-Bessel-over-t}. Set
\[
s=\lambda-\beta+\alpha(q+1),
\qquad
a=2\alpha\sqrt q.
\]
Since \(\lambda>\beta\),
\[
s
>
\alpha(q+1)
\geq
2\alpha\sqrt q
=
a,
\]
so that the assumptions of
Lemma~\ref{Lemma:Laplace-Bessel-over-t} are satisfied.
Moreover, by a direct computation we have that
\[
r(\lambda)
=
\frac{1}{\sqrt q}
\frac{s-\sqrt{s^2-a^2}}{a}.
\]
Hence, for every \(m\geq1\),
\[
\begin{aligned}
r(\lambda)^m
&=
m q^{-m/2}
\int_0^\infty
\frac{I_m(2\alpha\sqrt q\,t)}{t}
e^{-(\lambda-\beta+\alpha(q+1))t}\,dt
\\
&=
\int_0^\infty e^{-\lambda t}
\left[
\frac{m}{t}q^{-m/2}
e^{(\beta-\alpha(q+1))t}
I_m(2\alpha\sqrt q\,t)
\right]dt.
\end{aligned}
\]
Applying this identity to \eqref{eq:Hn-geometric-unnormalized} with
\(m=n+2j+1\), and using Tonelli's theorem, yields
\[
H_n(\lambda)
=
\int_0^\infty e^{-\lambda t}
\Biggl[
\frac{e^{(\beta-\alpha(q+1))t}}{\alpha t}
\sum_{j=0}^\infty
(n+2j+1)q^{-(n+2j+1)/2}
I_{n+2j+1}\left(2\alpha\sqrt q\,t\right)
\Biggr]dt.
\]
By uniqueness of the Laplace transform, for every \(n\in\NN_0\) and \(t>0\),
\begin{align*}
h_n(t)
&=
\frac{e^{(\beta-\alpha(q+1))t}}{\alpha t}
\sum_{j=0}^{\infty}
(n+2j+1)q^{-(n+2j+1)/2}
I_{n+2j+1}\left(2\alpha\sqrt q\,t\right)\\
&=
\frac{q^{-(n+1)/2}e^{(\beta-\alpha(q+1))t}}{\alpha t}
\sum_{j=0}^{\infty}
q^{-j}(n+2j+1)
I_{n+2j+1}\left(2\alpha\sqrt q\,t\right).
\end{align*}
Since $u(t,x)=h_n(t)$ whenever $n=d(o,x)$, this proves
\eqref{eq:fundamental-solution-homogeneous-positive}. 
\end{proof}

\begin{remark} As we have mentioned when \(q=1\), the homogeneous tree \(\TT_{q+1}=\TT_2\) is isomorphic to the one-dimensional lattice \(\mathbb Z\). In this case, the linearized equation reads
\[
\partial_tu(t,k)
=
\alpha\bigl(
u(t,k-1)+u(t,k+1)-2u(t,k)
\bigr)
+
\beta u(t,k).
\]
It is well known that (cf. \cite{Bateman-1943-BAMS}) its fundamental solution is the classical discrete heat kernel
\[
u(t,k)
=
e^{(\beta-2\alpha)t}I_{|k|}(2\alpha t).
\]
Such an expression is recovered from \eqref{eq:fundamental-solution-homogeneous-positive}. Indeed, setting \(q=1\), we have that 
\[
u(t,k)
=
\frac{e^{(\beta-2\alpha)t}}{\alpha t}
\sum_{j=0}^{\infty}
(|k|+2j+1)
I_{|k|+2j+1}(2\alpha t).
\]
To simplify the series, we use the recurrence relation
\[
mI_m(z)
=
\frac{z}{2}
\bigl(I_{m-1}(z)-I_{m+1}(z)\bigr),
\qquad m\geq1.
\]
Therefore, the series is telescopic:
\begin{align*}
\sum_{j=0}^{\infty}
(|k|+2j+1)I_{|k|+2j+1}(z)
&=
\frac{z}{2}
\sum_{j=0}^{\infty}
\left(
I_{|k|+2j}(z)-I_{|k|+2j+2}(z)
\right)=
\frac{z}{2}I_{|k|}(z),
\end{align*}
where we have used $\displaystyle\lim_{m\to\infty}I_m(z)=0$ for every fixed \(z>0\). Taking \(z=2\alpha t\), it follows that
\[
\sum_{j=0}^{\infty}
(n+2j+1)I_{n+2j+1}(2\alpha t)
=
\alpha t\,I_n(2\alpha t),
\]
which in turn gives
\[
u(t,k)
=
e^{(\beta-2\alpha)t}I_n(2\alpha t)
=
e^{(\beta-2\alpha)t}I_{|k|}(2\alpha t).
\]
\end{remark}

\subsection*{Proof of Theorem~\ref{Thm:two-sided-homogeneous}} Throughout the proof, we write
\[
n=|x|=d(o,x),
\qquad
\rho_\alpha:=2\alpha\sqrt q.
\]
By Lemma~\ref{Lemma:Bessel-series-comparison},
\[
(n+1)I_{n+1}(\rho_\alpha t)
\leq
\sum_{j=0}^{\infty}
q^{-j}(n+2j+1)I_{n+2j+1}(\rho_\alpha t)
\leq
\frac{q(q+1)}{(q-1)^2}(n+1)I_{n+1}(\rho_\alpha t).
\]
Hence, if
\[
\mathcal K_n(t)
:=
q^{-(n+1)/2}
e^{(\beta-\alpha(q+1))t}
(n+1)\frac{I_{n+1}(\rho_\alpha t)}{t},
\]
then Proposition~\ref{prop:fundamental-solution-homogeneous} gives
\begin{equation}\label{eq:proof-two-sided-reduction}
\frac{1}{\alpha}\mathcal K_n(t)
\leq
u(t,x)
\leq
\frac{q(q+1)}{\alpha(q-1)^2}\mathcal K_n(t).
\end{equation}

By Corollary~\ref{Cor:Bessel-nplusone-estimate},
\[
\frac{\rho_\alpha c_B}{2+\sqrt2}
\frac{e^{\Theta(n,t)}}
{\left(1+n^2+\rho_\alpha^2t^2\right)^{3/4}}
\leq
\frac{1}{t}I_{n+1}(\rho_\alpha t)
\leq
2\rho_\alpha C_B
\frac{e^{\Theta(n,t)}}
{\left(1+n^2+\rho_\alpha^2t^2\right)^{3/4}},
\]
where
\[
\Theta(n,t)
:=
\sqrt{n^2+\rho_\alpha^2t^2}
+
n\log\left(
\frac{\rho_\alpha t}
{n+\sqrt{n^2+\rho_\alpha^2t^2}}
\right).
\]
Using
\[
q^{-(n+1)/2}
=
q^{-1/2}e^{-\frac n2\log q},
\qquad
\frac{\rho_\alpha}{\alpha\sqrt q}=2,
\]
we obtain
\[
\frac{2c_B}{2+\sqrt2}
\frac{n+1}{\left(1+n^2+\rho_\alpha^2t^2\right)^{3/4}}
e^{\Psi(n,t)}
\leq
u(t,x)
\leq
\frac{4q(q+1)C_B}{(q-1)^2}
\frac{n+1}{\left(1+n^2+\rho_\alpha^2t^2\right)^{3/4}}
e^{\Psi(n,t)},
\]
with
\[
\Psi(n,t)
:=
(\beta-\alpha(q+1))t+\Theta(n,t)-\frac n2\log q.
\]

Now set \(c=n/t\). Since \(\rho_\alpha=2\alpha\sqrt q\),
\[
\Psi(n,t)
=
t\Phi(c),
\]
where
\[
\Phi(c)
:=
\beta-\alpha(q+1)
+
\sqrt{c^2+4q\alpha^2}
+
c\log\left(
\frac{2\alpha}
{c+\sqrt{c^2+4q\alpha^2}}
\right).
\]
Since \(\rho_\alpha^2=4q\alpha^2\), it follows that
\[
C_1
\frac{n+1}
{\left(1+n^2+4q\alpha^2t^2\right)^{3/4}}
e^{\,t\Phi(n/t)}
\leq
u(t,x)
\leq
C_2
\frac{n+1}
{\left(1+n^2+4q\alpha^2t^2\right)^{3/4}}
e^{\,t\Phi(n/t)},
\]
where
\[
C_1=\frac{2c_B}{2+\sqrt2},
\qquad
C_2=\frac{4q(q+1)}{(q-1)^2}C_B.
\]
This proves the result. \hfill\(\blacksquare\)

\begin{corollary}[General linear kernel]
\label{cor:general-linear-kernel}
Let $\mathcal G(t,x,y)$ be the fundamental solution of the
linearized equation. There exist constants $C_1,C_2>0$, depending only on
$\alpha$, $\beta$, and $q$, such that the two-sided estimate in
Theorem~\ref{Thm:two-sided-homogeneous} holds with $|x|$ replaced by
$d(x,y)$, uniformly for $t>0$ and $x,y\in\TT_{q+1}$.
\end{corollary}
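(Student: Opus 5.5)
The argument reduces the general kernel to the root-centered kernel of Theorem~\ref{Thm:two-sided-homogeneous} by using the homogeneity of the tree. Fix $y\in\TT_{q+1}$. Since the automorphism group of $\TT_{q+1}$ acts transitively on vertices, we choose an automorphism $\sigma$ with $\sigma(y)=o$. The operator $\Delta_{\TT_{q+1}}$ commutes with composition by $\sigma$, because $\sigma$ preserves adjacency. Hence the function
\[
v(t,x):=\mathcal G(t,\sigma^{-1}(x),y)
\]
solves the linearized equation \eqref{Eq:linearized-Fisher-KPP-tree:2} with initial datum $\delta_y\circ\sigma^{-1}=\delta_o$. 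We will also check that $v$ stays in the class where uniqueness holds, namely the bound $0\le v\le e^{\beta t}$ of Lemma~\ref{Lemma:radial-linear-bound}, or $\ell^2$-boundedness.

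By uniqueness, $v(t,x)=u(t,x)$, where $u$ is the root-centered fundamental solution. Therefore
\[
\mathcal G(t,x,y)=u(t,\sigma(x)).
\]
Since $\sigma$ is an isometry with $\sigma(y)=o$, we have $|\sigma(x)|=d(o,\sigma(x))=d(\sigma(y),\sigma(x))=d(x,y)$.

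We now apply Theorem~\ref{Thm:two-sided-homogeneous} at the point $\sigma(x)$. This gives the two-sided bound with $|x|$ replaced by $d(x,y)$. The constants are exactly the $C_1,C_2$ of that theorem, which depend only on $\alpha,\beta,q$ (through $c_B,C_B$). This yields uniformity in $x,y$ and $t>0$.

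The only point requiring care is the uniqueness step. It must be invoked in a class containing both $u$ and its transport by an automorphism. Since automorphisms preserve sup-norms and $\ell^2$-norms, the same uniqueness statement used in Proposition~\ref{prop:fundamental-solution-homogeneous} applies verbatim. Alternatively, one can avoid uniqueness by writing $\mathcal G(t,x,y)=e^{\beta t}(e^{t\alpha\Delta}\delta_y)(x)$ as a semigroup on $\ell^2$. The unitary $U_\sigma f=f\circ\sigma^{-1}$ commutes with $\Delta_{\TT_{q+1}}$, hence with $e^{t\alpha\Delta}$, and $U_\sigma\delta_y=\delta_{\sigma(y)}$. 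This gives the same identity directly.
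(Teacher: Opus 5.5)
Your proposal is correct and matches the paper's proof. Both transport the kernel by an automorphism sending $y$ to $o$, use the invariance of $\Delta_{\TT_{q+1}}$ to get $\mathcal G(t,x,y)=\mathcal G(t,\sigma(x),o)$, and then apply Theorem~\ref{Thm:two-sided-homogeneous} with $d(\sigma(x),o)=d(x,y)$. Your extra care over the uniqueness class, or the alternative $\ell^2$-semigroup commutation argument, makes explicit a step the paper leaves implicit.
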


\begin{proof}
Choose an automorphism $\varphi$ of $\TT_{q+1}$ such that
$\varphi(y)=o$. The invariance of the graph Laplacian gives
\[
\mathcal G(t,x,y)=\mathcal G(t,\varphi(x),o),
\qquad d(\varphi(x),o)=d(x,y).
\]
The conclusion follows from Theorem~\ref{Thm:two-sided-homogeneous}.
\end{proof}

\begin{remark}
The function \(\Phi\) has two different behaviors depending on the
parameters \(\alpha\), \(\beta\), and \(q\). Indeed, a direct computation
gives
\[
\Phi'(c)
=
\log\left(
\frac{2\alpha}
{c+\sqrt{c^2+4q\alpha^2}}
\right),
\qquad c\geq0.
\]
Since \(q\geq2\),
\[
c+\sqrt{c^2+4q\alpha^2}>2\alpha,
\qquad c\geq0,
\]
and hence
\[
\Phi'(c)<0,
\qquad c\geq0.
\]
Thus, \(\Phi\) is strictly decreasing on \([0,\infty)\). Moreover,
\[
\Phi(0)
=
\beta-\alpha(\sqrt q-1)^2.
\]
Therefore, if
\[
\beta\leq\alpha(\sqrt q-1)^2,
\]
then
\[
\Phi(c)<0,
\qquad c>0.
\]

On the other hand, if
\[
\beta>\alpha(\sqrt q-1)^2,
\]
then \(\Phi(0)>0\). Since \(\Phi\) is strictly decreasing and
\[
\lim_{c\to\infty}\Phi(c)=-\infty,
\]
there exists a unique \(c_\ast>0\) such that
\[
\Phi(c_\ast)=0,
\]
with
\[
\Phi(c)>0 \quad \text{for } 0\leq c<c_\ast,
\qquad
\Phi(c)<0 \quad \text{for } c>c_\ast.
\]

Thus,
\[
\beta=\alpha(\sqrt q-1)^2,
\]
is the threshold separating decay along every positive ballistic
trajectory \(d(o,x)\sim ct\) from the regime in which a positive
critical propagation speed \(c_\ast\) emerges.
\end{remark}

\begin{proof}[Proof of Proposition~\ref{prop:critical-speed}]
Define
\[
D(\lambda)
:=
\beta-\alpha(q+1)
+
\alpha\left(
e^\lambda+qe^{-\lambda}
\right),
\qquad \lambda>0,
\]
and set
\[
c(\lambda):=\frac{D(\lambda)}{\lambda},
\qquad \lambda>0.
\]
Moreover, define
\[
F(\lambda)
:=
D(\lambda)-\lambda D'(\lambda),
\qquad \lambda>0.
\]
A direct computation gives
\[
F'(\lambda)
=
-\lambda\alpha
\left(
e^\lambda+qe^{-\lambda}
\right)
<0,
\]
so that \(F\) is strictly decreasing on \((0,\infty)\). Moreover,
\[
D'(\log(\sqrt q))=0,
\]
and therefore
\[
F(\log(\sqrt q))
=
D(\log(\sqrt q))
=
\beta-\alpha(q+1)+2\alpha\sqrt q
=
\beta-\alpha(\sqrt q-1)^2
>0.
\]
On the other hand,
\[
F(\lambda)
=
\beta-\alpha(q+1)
+
\alpha(1-\lambda)e^\lambda
+
\alpha q(1+\lambda)e^{-\lambda},
\]
and hence
\[
F(\lambda)\to-\infty
\quad\text{as }\lambda\to\infty.
\]
Thus, there exists a unique
\[
\lambda_\ast>\log(\sqrt q),
\]
such that
\[
F(\lambda_\ast)=0.
\]
Since
\[
F(\lambda_\ast)
=
D(\lambda_\ast)
-
\lambda_\ast D'(\lambda_\ast)=0,
\]
this identity is equivalent to 
\[
\beta-\alpha(q+1)
+
\alpha\left(
e^{\lambda_\ast}+qe^{-\lambda_\ast}
\right)
=
\alpha\lambda_\ast
\left(
e^{\lambda_\ast}-qe^{-\lambda_\ast}
\right).
\]

We next relate \(\lambda_\ast\) to the critical speed. For
\(\lambda>\log(\sqrt q)\), set
\[
\mu
:=
\lambda-\frac12\log q.
\]
Then
\[
e^\lambda=\sqrt q\,e^\mu,
\qquad
qe^{-\lambda}=\sqrt q\,e^{-\mu},
\]
and consequently
\[
D(\lambda)
=
\beta-\alpha(q+1)
+
2\alpha\sqrt q\,\cosh\mu,
\]
while
\[
D'(\lambda)
=
2\alpha\sqrt q\,\sinh\mu.
\]

Set
\[
\xi:=D'(\lambda)
=
2\alpha\sqrt q\,\sinh\mu.
\]
Since \(\mu>0\), we have \(\xi>0\), and
\[
\mu
=
\operatorname{arsinh}
\left(
\frac{\xi}{2\alpha\sqrt q}
\right).
\]
Moreover,
\[
2\alpha\sqrt q\,\cosh\mu
=
\sqrt{\xi^2+4\alpha^2q}.
\]
Thus,
\[
\lambda
=
\operatorname{arsinh}
\left(
\frac{\xi}{2\alpha\sqrt q}
\right)
+
\frac12\log q.
\]
It follows that
\[
\begin{aligned}
F(\lambda)
&=
\beta-\alpha(q+1)
+
\sqrt{\xi^2+4\alpha^2q}
-
\xi\operatorname{arsinh}
\left(
\frac{\xi}{2\alpha\sqrt q}
\right)
-
\frac \xi2\log q=
\Phi(\xi).
\end{aligned}
\]
Hence, $F(\lambda_\ast)=0$ if and only if $\Phi\left(D'(\lambda_\ast)\right)=0$. Since
\[
\lambda_\ast>\frac12\log q,
\]
we have
\[
D'(\lambda_\ast)>0.
\]
By the uniqueness of the positive zero of \(\Phi\), it follows that
\[
c_\ast=D'(\lambda_\ast).
\]
Therefore,
\[
c_\ast
=
\alpha\left(
e^{\lambda_\ast}-qe^{-\lambda_\ast}
\right).
\]
Moreover, since
\[
\Phi'(c)
=
\log\left(
\frac{2\alpha}
{c+\sqrt{c^2+4q\alpha^2}}
\right),
\]
we obtain the equivalent characterization
\[
\lambda_\ast
=
-\Phi'(c_\ast)
=
\log\left(
\frac{c_\ast+\sqrt{c_\ast^2+4q\alpha^2}}
{2\alpha}
\right).
\]

Finally,
\[
\frac{d}{d\lambda}
\left(
\frac{D(\lambda)}{\lambda}
\right)
=
\frac{\lambda D'(\lambda)-D(\lambda)}{\lambda^2}
=
-\frac{F(\lambda)}{\lambda^2}.
\]
Since \(F\) is strictly decreasing and vanishes uniquely at
\(\lambda_\ast\),
\[
F(\lambda)>0
\quad\text{for }0<\lambda<\lambda_\ast,
\qquad
F(\lambda)<0
\quad\text{for }\lambda>\lambda_\ast.
\]
Therefore, $\lambda\mapsto\frac{D(\lambda)}{\lambda}$ is strictly decreasing on \((0,\lambda_\ast)\) and strictly increasing
on \((\lambda_\ast,\infty)\). Hence its unique global minimum is attained
at \(\lambda_\ast\). Since
\[
F(\lambda_\ast)=0,
\]
we have
\[
D(\lambda_\ast)
=
\lambda_\ast D'(\lambda_\ast),
\]
and consequently
\[
\inf_{\lambda>0}
\frac{D(\lambda)}{\lambda}
=
\frac{D(\lambda_\ast)}{\lambda_\ast}
=
D'(\lambda_\ast)
=
c_\ast.
\]
\end{proof}

\section{Nonlinear Dynamics: Leading-Edge Asymptotics and the Bramson Correction}
\label{sec:nonlinear-dynamics}

We recall that throughout this section we consider the nonlinear problem
\begin{equation}\label{Eq:Fisher-KPP-tree:nonlinear}
\begin{cases}
\partial_t u(t,x)
=
\alpha \Delta_{\TT_{q+1}}u(t,x)
+\beta u(t,x)\bigl(1-u(t,x)\bigr),
& t>0,\quad x\in\TT_{q+1},
\\[1mm]
u(0,x)=u_0(x),
& x\in\TT_{q+1},
\end{cases}
\end{equation}
where \(u_0\) is a nontrivial, compactly supported radial initial datum
with respect to the root \(o\), satisfying
\[
0\leq u_0(x)\leq1,
\qquad x\in\TT_{q+1}.
\]

Our starting point is the comparison principle, which is standard for cooperative systems; see, for
instance, \cite[Chapter~3]{Smith-1995-Book}. For completeness, in Appendix \ref{Appendix:Auxiliary:Results:2} we give a short argument adapted to our setting.

As a direct consequence of the comparison principle, we obtain extinction
of solutions below the propagation threshold. This phenomenon is a
distinctive feature of the branching geometry and is markedly different
from the behavior observed on the one-dimensional lattice.

\begin{proposition}\label{prop:extinction-below-threshold}
Let \(u\) be the solution of \eqref{Eq:FKPP:on:Trees} with a nonnegative,
compactly supported initial datum \(u_0\) satisfying \(0\leq u_0\leq1\).
Assume that
\[
\beta\leq\alpha(\sqrt q-1)^2.
\]
Then
\[
\lim_{t\to\infty}
\|u(t,\cdot)\|_{\ell^\infty(\TT_{q+1})}
=
0.
\]
\end{proposition}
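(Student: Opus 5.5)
The plan is to dominate $u$ by the solution of the linearized equation and then read off the decay from the two-sided kernel estimate of Theorem~\ref{Thm:two-sided-homogeneous}, in the form of Corollary~\ref{cor:general-linear-kernel}. Radial symmetry of $u_0$ is not needed.

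\emph{Step 1 (linear supersolution).} Define
\[
v(t,x):=\sum_{y\in\operatorname{supp}u_0}\mathcal G(t,x,y)\,u_0(y).
\]
This is a finite sum, so $v$ is a bounded solution of \eqref{Eq:linearized-Fisher-KPP-tree} on each finite time interval, with $v(0,\cdot)=u_0$.

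By the comparison principle of Appendix~\ref{Appendix:Auxiliary:Results:2}, $0\leq u\leq 1$. Hence $\beta u(1-u)\leq\beta u$, so $u$ is a subsolution of the linear problem. Applying the same comparison principle to the linear equation (a cooperative system with bounded coefficients) gives $0\leq u\leq v$ for all $t\geq0$.

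\emph{Step 2 (uniform decay of $v$).} By Corollary~\ref{cor:general-linear-kernel}, with $n=d(x,y)$,
\[
\mathcal G(t,x,y)\leq C_2\frac{n+1}{(1+n^2+4q\alpha^2t^2)^{3/4}}\,e^{t\Phi(n/t)}.
\]
The remark following Corollary~\ref{cor:general-linear-kernel} shows that $\Phi$ is strictly decreasing on $[0,\infty)$. It also shows that $\Phi(0)=\beta-\alpha(\sqrt q-1)^2\leq 0$ under the present hypothesis. Hence $t\Phi(n/t)\leq 0$ and the exponential factor is at most $1$.

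For the algebraic prefactor, use $n+1\leq \sqrt2\,(1+n^2)^{1/2}\leq\sqrt2\,(1+n^2+4q\alpha^2t^2)^{1/2}$. This gives
\[
\mathcal G(t,x,y)\leq \sqrt2\,C_2\,(1+4q\alpha^2t^2)^{-1/4},
\]
uniformly in $x,y$. Summing over the finitely many points of $\operatorname{supp}u_0$ and using $u_0\leq1$ yields
\[
\|u(t,\cdot)\|_{\ell^\infty}\leq \|v(t,\cdot)\|_{\ell^\infty}\leq C\,(\#\operatorname{supp}u_0)\,t^{-1/2}\longrightarrow0 .
\]
In the strictly subcritical case $\beta<\alpha(\sqrt q-1)^2$ one even gets exponential decay $e^{t\Phi(0)}$. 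In the critical case $\beta=\alpha(\sqrt q-1)^2$ the $t^{-1/2}$ prefactor is exactly what forces convergence to zero.

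\emph{Main obstacle.} The only delicate point is Step~1: justifying comparison on the infinite graph. This requires that both $u$ and $v$ are bounded on $[0,T]\times\TT_{q+1}$, which holds since $0\le u\le 1$ and $\mathcal G\leq e^{\beta t}$ (Lemma~\ref{Lemma:radial-linear-bound}). The appendix's comparison argument for bounded sub- and supersolutions of cooperative systems with bounded coupling then applies directly. The kernel estimate itself is already available, so Step~2 is routine.
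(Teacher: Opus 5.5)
Your proof is correct and follows the paper's argument: you dominate $u$ by the linearized solution via comparison, then apply the uniform kernel upper bound of Theorem~\ref{Thm:two-sided-homogeneous}, transported by homogeneity of the tree. The only difference is the last estimate. You use just $\Phi\le\Phi(0)\le0$ and absorb the factor $n+1$ into the denominator, which gives a uniform $t^{-1/2}$ bound; the paper instead uses $\Phi'(c)\le-\frac12\log q$ to extract a factor $q^{-n/2}$ that neutralizes $n+1$, yielding the sharper rate $t^{-3/2}e^{-[\alpha(\sqrt q-1)^2-\beta]t}$, but either suffices for the conclusion.
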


\begin{proof}
Let $v$ be the solution of the corresponding linearized problem
\[
\partial_t v
=
\alpha\Delta_{\TT_{q+1}}v+\beta v,
\qquad
v(0,\cdot)=u_0.
\]
By Lemma~\ref{lem:nonlinear-wellposedness-radiality}, the comparison
principle applies. Since
\[
\beta u(1-u)\leq\beta u,
\qquad
0\leq u\leq1,
\]
we obtain
\[
0\leq u(t,x)\leq v(t,x),
\qquad
t\geq0,\quad x\in\TT_{q+1}.
\]
Let \(\mathcal G(t,x,y)\) denote the fundamental solution of the
linearized equation. Since \(u_0\) is compactly supported,
\[
v(t,x)
=
\sum_{y\in\operatorname{supp}u_0}
\mathcal G(t,x,y)u_0(y).
\]
By homogeneity, \(\mathcal G(t,x,y)\) depends only on the distance between $x$ and $y$, denoted as  
\(n=d(x,y)\). 
By Theorem~\ref{Thm:two-sided-homogeneous}, whose proof relies on
Lemma~\ref{Lemma:Bessel-series-comparison},
Lemma~\ref{Lemma:Bessel-uniform-estimate}, and
Corollary~\ref{Cor:Bessel-nplusone-estimate}, there exists $C>0$ such
that
\[
\mathcal G(t,x,y)
\leq
C
\frac{n+1}
{\left(1+n^2+4q\alpha^2t^2\right)^{3/4}}
\exp\left\{
t\Phi\left(\frac nt\right)
\right\},
\qquad
n=d(x,y).
\]
Moreover, by the definition of $\Phi$ and the differentiation formula recorded in
Lemma~\ref{lem:Phi-expansion-critical},
\[
\Phi'(c)
=
\log\left(
\frac{2\alpha}
{c+\sqrt{c^2+4q\alpha^2}}
\right),
\qquad c\geq0.
\]
Then
\[
\Phi'(c)\leq
-\frac12\log q,
\qquad c\geq0.
\]
Hence,
\[
\Phi(c)
\leq
\Phi(0)-\frac c2\log q
=
-\bigl[\alpha(\sqrt q-1)^2-\beta\bigr]
-\frac c2\log q.
\]
From this, we deduce
\[
\exp\left\{
t\Phi\left(\frac nt\right)
\right\}
\leq
e^{-[\alpha(\sqrt q-1)^2-\beta]t}q^{-n/2}.
\]
Since
\[
\left(1+n^2+4q\alpha^2t^2\right)^{3/4}
\geq
(4q\alpha^2t^2)^{3/4},
\]
we obtain, for \(t>0\),
\[
\mathcal G(t,x,y)
\leq
Ct^{-3/2}
e^{-[\alpha(\sqrt q-1)^2-\beta]t}
(n+1)q^{-n/2}.
\]
Since
\[
\sup_{n\geq0}(n+1)q^{-n/2}<\infty,
\]
it follows that
\[
\sup_{x,y\in\TT_{q+1}}
\mathcal G(t,x,y)
\leq
Ct^{-3/2}
e^{-[\alpha(\sqrt q-1)^2-\beta]t}.
\]
Consequently,
\[
u(t,x)
\leq
Ct^{-3/2}
e^{-[\alpha(\sqrt q-1)^2-\beta]t}
\sum_{y\in\operatorname{supp}u_0}u_0(y).
\]
Taking the supremum over \(x\in\TT_{q+1}\) and letting \(t\to\infty\)
proves the result.
\end{proof}

\begin{remark}
If we fix \(\alpha\) and \(\beta\), then \(q\) can always be chosen
sufficiently large so that
\[
\beta\leq\alpha(\sqrt q-1)^2.
\]
Thus, increasing the branching eventually places the system in the
extinction regime.

This observation is closely related to the phenomenon described by
Hoffman and Holzer~\cite{Hoffman-Holzer-2019}. In their normalization,
the branching parameter is fixed and the diffusion coefficient is varied;
they identify a critical diffusion threshold beyond which propagation is
lost. In our parametrization, this transition is expressed by
\[
\alpha
\geq
\frac{\beta}{(\sqrt q-1)^2}.
\]
Thus, extinction may be induced either by increasing the diffusion
strength for a fixed tree or by increasing the branching while keeping
\(\alpha\) and \(\beta\) fixed.
\end{remark}

The dynamics in the complementary regime are considerably richer and,
consequently, more delicate to analyze. To study this regime, we first
introduce several auxiliary results that will be used throughout the
subsequent analysis.

By Appendix~\ref{Appendix:Auxiliary:Results:2}, radial symmetry is
preserved by the evolution. Hence, we may write
\[
u(t,x)=u_n(t),
\qquad
n=d(o,x).
\]
The corresponding radial equation is
\begin{equation}\label{eq:radial-KPP-Bramson-new}
\begin{cases}
\partial_tu_0
=
\alpha(q+1)(u_1-u_0)+\beta u_0(1-u_0),
\\[1mm]
\partial_tu_n
=
\alpha u_{n-1}
+\alpha q u_{n+1}
-\alpha(q+1)u_n
+\beta u_n(1-u_n),
& n\geq1.
\end{cases}
\end{equation}

The equation at \(n=0\) is exceptional, since it reflects the different
geometry of the root, while for every \(n\geq1\) the radial equation has
constant coefficients and is therefore translation invariant in the
spatial index. Thus, the root can be viewed as a localized defect confined
to a fixed neighborhood of \(n=0\).

The asymptotic regimes considered below, however, are observed in moving
spatial regions whose distance from the root tends to infinity as
\(t\to\infty\). More precisely, the relevant indices satisfy
\[
n\sim c_\ast t,
\]
up to lower-order corrections, and hence \(n\to\infty\). Consequently, for
all sufficiently large times, these moving regions lie entirely in the
bulk \(n\geq1\) and do not intersect the exceptional site \(n=0\).

It follows that the local structure of the equation near the root does not
enter the leading-order analysis of the propagating front. In particular,
the behavior in the leading edge is governed by the translation-invariant
bulk equation, namely the second equation in
\eqref{eq:radial-KPP-Bramson-new}. The root may influence the solution at
finite times and its overall amplitude, but it does not modify the
asymptotic mechanism determining the front propagation in the moving
region.

The equation at the root is not translation invariant, whereas the
radial equation is translation invariant for every $n\geq1$. Let
$0<\delta<1$ and consider a moving region satisfying
\[
n-c_\ast t\geq-t^\delta.
\]
Since $c_\ast>0$, there exists $T_\delta>0$ such that
\[
c_\ast t-t^\delta\geq1,
\qquad t\geq T_\delta.
\]
Thus every point of this moving region belongs to the bulk $n\geq1$ for
all $t\geq T_\delta$. Hence all moving-domain comparisons and barrier
arguments occur entirely in the translation-invariant bulk equation. The
root affects only finite-time initialization and multiplicative constants;
it does not modify $c_\ast$, $\lambda_\ast$, or the coefficient
$3/(2\lambda_\ast)$ in the logarithmic correction.

In order to analyze the behavior of the solution near the leading edge,
we introduce the critical exponential conjugation
\begin{equation}\label{eq:critical-conjugation-new}
v_n(t):=e^{\lambda_\ast(n-c_\ast t)}u_n(t),
\qquad n\geq0.
\end{equation}
The main reason for introducing \(v_n\) in this form is that the factor
\[
e^{-\lambda_\ast(n-c_\ast t)},
\]
captures the dominant exponential decay of the solution near the leading
edge. Thus, the expression \eqref{eq:critical-conjugation-new} factors out this decay and isolates the slower behavior that remains.

\begin{lemma}\label{lem:critical-conjugation-new}
For every \(n\geq1\), the function \(v\) defined by
\eqref{eq:critical-conjugation-new} satisfies
\[
\partial_t v_n
=
\mathscr L_\ast v_n
-\beta e^{-\lambda_\ast(n-c_\ast t)}v_n^2,
\]
where
\[
\mathscr L_\ast v_n
:=
a_\ast(v_{n-1}-v_n)
+
b_\ast(v_{n+1}-v_n),
\]
with
\[
a_\ast:=\alpha e^{\lambda_\ast},
\qquad
b_\ast:=\alpha q e^{-\lambda_\ast}.
\]
\end{lemma}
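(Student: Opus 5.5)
Substitute $u_n=e^{-\lambda_\ast(n-c_\ast t)}v_n$ into the bulk equation of \eqref{eq:radial-KPP-Bramson-new} (valid for $n\ge1$) and compute directly. Set $E_n(t):=e^{-\lambda_\ast(n-c_\ast t)}$. Then
\[
\partial_t u_n=E_n\bigl(\partial_t v_n+\lambda_\ast c_\ast v_n\bigr),
\qquad
u_{n\pm1}=e^{\mp\lambda_\ast}E_n v_{n\pm1}.
\]
Inserting these expressions and dividing by $E_n>0$ gives
\[
\partial_t v_n+\lambda_\ast c_\ast v_n
=
\alpha e^{\lambda_\ast}v_{n-1}+\alpha q e^{-\lambda_\ast}v_{n+1}
-\alpha(q+1)v_n+\beta v_n-\beta E_n v_n^2 .
\]
The first two terms on the right are $a_\ast v_{n-1}+b_\ast v_{n+1}$.

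Next we rewrite this as $a_\ast(v_{n-1}-v_n)+b_\ast(v_{n+1}-v_n)$ plus a zeroth-order term. The remaining coefficient of $v_n$ is
\[
a_\ast+b_\ast-\alpha(q+1)+\beta-\lambda_\ast c_\ast
=
\beta-\alpha(q+1)+\alpha\bigl(e^{\lambda_\ast}+qe^{-\lambda_\ast}\bigr)-\lambda_\ast c_\ast .
\]
The key step is to show that this coefficient vanishes. This uses the two characterizations in Proposition~\ref{prop:critical-speed}. First, $c_\ast=\alpha(e^{\lambda_\ast}-qe^{-\lambda_\ast})$. Second, $\lambda_\ast$ solves
\[
\beta-\alpha(q+1)+\alpha\bigl(e^{\lambda_\ast}+qe^{-\lambda_\ast}\bigr)=\alpha\lambda_\ast\bigl(e^{\lambda_\ast}-qe^{-\lambda_\ast}\bigr).
\]
Together they give $D(\lambda_\ast)=\lambda_\ast c_\ast$, so the coefficient is zero. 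What remains is exactly $\partial_t v_n=\mathscr L_\ast v_n-\beta E_n v_n^2$.

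No real obstacle is expected; the only care needed is bookkeeping. The shifts $n\pm1$ produce the factors $e^{\mp\lambda_\ast}$, which is why the backward neighbour carries the coefficient $a_\ast=\alpha e^{\lambda_\ast}$ and the forward one carries $b_\ast=\alpha qe^{-\lambda_\ast}$. We also restrict to $n\ge1$ so that the root equation, which has different coefficients, never enters.
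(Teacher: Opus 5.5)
Your proposal is correct and follows the paper's proof: you substitute $u_n=e^{-\lambda_\ast(n-c_\ast t)}v_n$ into the bulk equation for $n\geq1$ and divide out the exponential factor. You then remove the zeroth-order coefficient using the critical relation $\lambda_\ast c_\ast=D(\lambda_\ast)$, which comes from $c_\ast=D'(\lambda_\ast)$ and $F(\lambda_\ast)=0$ in Proposition~\ref{prop:critical-speed}.
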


\begin{proof}
By a direct computation we have that
\[
\partial_tu_n
=
e^{-\lambda_\ast(n-c_\ast t)}
\left(
\partial_tv_n+\lambda_\ast c_\ast v_n
\right).
\]
Further, 
\[
u_{n+1}
=
e^{-\lambda_\ast(n-c_\ast t)}
e^{-\lambda_\ast}v_{n+1}, \quad \text{and}\quad 
u_{n-1}
=
e^{-\lambda_\ast(n-c_\ast t)}
e^{\lambda_\ast}v_{n-1}.
\]

Substituting these expressions into the bulk equation in
\eqref{eq:radial-KPP-Bramson-new}, we obtain
\[
\partial_tv_n
=
\alpha e^{\lambda_\ast}v_{n-1}
+
\alpha q e^{-\lambda_\ast}v_{n+1}
+
\bigl[
\beta-\alpha(q+1)-\lambda_\ast c_\ast
\bigr]v_n
-
\beta e^{-\lambda_\ast(n-c_\ast t)}v_n^2.
\]
Using the critical relation
\[
\lambda_\ast c_\ast
=
\beta-\alpha(q+1)
+
\alpha\left(
e^{\lambda_\ast}+qe^{-\lambda_\ast}
\right),
\]
the coefficient of \(v_n\) becomes
\[
-\alpha e^{\lambda_\ast}
-\alpha q e^{-\lambda_\ast}.
\]
Hence,
\[
\partial_tv_n
=
\alpha e^{\lambda_\ast}(v_{n-1}-v_n)
+
\alpha q e^{-\lambda_\ast}(v_{n+1}-v_n)
-
\beta e^{-\lambda_\ast(n-c_\ast t)}v_n^2.
\]
By the definition of \(a_\ast\), \(b_\ast\), and
\(\mathscr L_\ast\), this is the desired expression.
\end{proof}

\begin{remark}
The conjugated equation in
Lemma~\ref{lem:critical-conjugation-new} holds for $n\geq1$ only. The
root $n=0$ satisfies a different discrete boundary equation, due to the
fact that all its $q+1$ neighbors belong to the first sphere. This
boundary effect is irrelevant for the leading-edge analysis, whose
moving domains are contained in $n\geq1$ for all sufficiently large
times.
\end{remark}

\begin{remark}
The coefficients of the conjugated operator satisfy
\[
a_\ast-b_\ast=c_\ast,
\qquad
a_\ast+b_\ast=D''(\lambda_\ast).
\]
Indeed,
\[
D'(\lambda)
=
\alpha\left(e^\lambda-qe^{-\lambda}\right),
\qquad
D''(\lambda)
=
\alpha\left(e^\lambda+qe^{-\lambda}\right),
\]
and Proposition~\ref{prop:critical-speed} gives
\[
D'(\lambda_\ast)=c_\ast.
\]
Therefore,
\[
a_\ast-b_\ast
=
\alpha e^{\lambda_\ast}
-\alpha q e^{-\lambda_\ast}
=
c_\ast,
\]
while
\[
a_\ast+b_\ast
=
\alpha e^{\lambda_\ast}
+\alpha q e^{-\lambda_\ast}
=
D''(\lambda_\ast).
\]
In particular, \(\mathscr L_\ast\) is an asymmetric discrete diffusion
operator whose drift is precisely \(c_\ast\).
\end{remark}

Let \(\mathcal G_n(t)\) be the fundamental solution on \(\ZZ\) of
\begin{equation}\label{eq:critical-linear-whole-line-new}
\partial_tw_n=\mathscr L_\ast w_n,
\qquad
w_n(0)=\delta_{0n}.
\end{equation}

Some properties of \(\mathcal{G}_n\) are summarized in Lemma \ref{lem:tilted-Green-new}. The following result is an adaptation to the asymmetric case of the corresponding result in \cite{Besse-Faye-Roquejoffre-Zhang-2023-TAMS}.

\begin{proposition}\label{prop:Green-refined-asymptotics-new}
For every \(\vartheta>0\) and every \(\gamma\in(0,1/2)\), there exist
constants \[
T_0=T_0(\vartheta,\gamma)>0,
\qquad
C=C(\vartheta,\gamma)>0,
\qquad
\delta_0=\delta_0(\vartheta,\gamma)>0,
\]
such that, for every $t\geq T_0$ and every
$n\in\ZZ$ satisfying
\[
|n-c_\ast t|\leq\vartheta t^\gamma,
\]
one has
\begin{equation}\label{eq:Green-decomp-new}
\mathcal G_n(t)
=
\mathcal H_n(t)
+
\mathcal E_n(t),
\end{equation}
where
\begin{equation}\label{eq:H-new}
\mathcal H_n(t)
=
\left[
\frac{1}{\sqrt{s_\ast t}}
+
\frac{c_\ast}{6s_\ast^2t}(X^3-3X)
\right]\mathfrak g(X),
\end{equation}
with
\[
s_\ast:=a_\ast+b_\ast,
\qquad
X:=\frac{n-c_\ast t}{\sqrt{s_\ast t}},
\qquad
\mathfrak g(X):=\frac{1}{\sqrt{2\pi}}e^{-X^2/2},
\]
and
\begin{equation}\label{eq:E-new}
|\mathcal E_n(t)|
\leq
Ct^{-3/2}
\exp\left(
-\delta_0\frac{(n-c_\ast t)^2}{t}
\right).
\end{equation}
\end{proposition}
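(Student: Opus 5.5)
We will prove the expansion by Fourier inversion followed by a second-order Edgeworth expansion of the characteristic function. The Green function of \(\mathscr L_\ast\) on \(\ZZ\) describes a continuous-time random walk that jumps \(+1\) at rate \(a_\ast\) and \(-1\) at rate \(b_\ast\). Its first three cumulants per unit time are therefore \(a_\ast-b_\ast=c_\ast\), \(a_\ast+b_\ast=s_\ast\) and \(a_\ast-b_\ast=c_\ast\). This is exactly the structure of \eqref{eq:H-new}: the Gaussian term comes from the first two cumulants and the Hermite correction \(X^3-3X\) from the third.

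\textbf{Step 1 (Fourier representation and cumulant expansion).} Taking the discrete Fourier transform of \eqref{eq:critical-linear-whole-line-new} gives
\[
\mathcal G_n(t)=\frac1{2\pi}\int_{-\pi}^{\pi}e^{ikn+t\psi(k)}\,dk,
\qquad
\psi(k)=a_\ast(e^{-ik}-1)+b_\ast(e^{ik}-1).
\]
The symbol satisfies
\[
\operatorname{Re}\psi(k)=-s_\ast(1-\cos k)\leq -\tfrac{2s_\ast}{\pi^2}k^2
\quad\text{on }[-\pi,\pi].
\]
Near \(k=0\) it has the Taylor expansion
\[
\psi(k)=-ic_\ast k-\tfrac{s_\ast}{2}k^2+\tfrac{ic_\ast}{6}k^3+O(k^4).
\]

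\textbf{Step 2 (splitting the integral).} Fix a small \(\varepsilon>0\) and split the integral at \(|k|=t^{-1/2+\varepsilon}\).

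On the outer region the quadratic bound on \(\operatorname{Re}\psi\) gives a contribution of size \(O(e^{-ct^{2\varepsilon}})\). On the inner region we write
\[
e^{ikn+t\psi(k)}
=e^{ik(n-c_\ast t)-s_\ast tk^2/2}
\Bigl(1+\tfrac{ic_\ast}{6}tk^3+R(t,k)\Bigr),
\qquad
|R|\lesssim tk^4+t^2k^6 .
\]
This bound on \(R\) holds uniformly because \(t|k|^3\leq t^{-1/2+3\varepsilon}\) is small once \(\varepsilon<1/6\).

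\textbf{Step 3 (main terms and errors).} We extend the two main integrals to all of \(\RR\); this costs only an exponentially small error. With \(u=\sqrt{s_\ast t}\,k\) they become explicit Gaussian integrals:
\[
\frac1{2\pi}\int_{\RR}e^{iuX-u^2/2}\,\frac{du}{\sqrt{s_\ast t}}=\frac{\mathfrak g(X)}{\sqrt{s_\ast t}},
\]
\[
\frac1{2\pi}\int_{\RR}\frac{ic_\ast t}{6}\,\frac{u^3}{(s_\ast t)^{3/2}}\,e^{iuX-u^2/2}\,\frac{du}{\sqrt{s_\ast t}}=\frac{c_\ast}{6s_\ast^2t}\,(X^3-3X)\,\mathfrak g(X).
\]
The second identity uses
\[
\frac1{2\pi}\int_{\RR} (iu)^3e^{iuX-u^2/2}\,du=-\mathfrak g'''(X)=(X^3-3X)\,\mathfrak g(X).
\]
Together these two terms give \(\mathcal H_n(t)\).

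The remainder is bounded by
\[
\int_{\RR}(tk^4+t^2k^6)e^{-s_\ast tk^2/2}\,dk\lesssim t^{-3/2}.
\]
The truncation and outer-region errors are exponentially small, hence also \(O(t^{-3/2})\).

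\textbf{Step 4 (Gaussian weight in \eqref{eq:E-new}).} For \(|n-c_\ast t|\leq\vartheta t^\gamma\) with \(\gamma<1/2\), we have
\[
\frac{(n-c_\ast t)^2}{t}\leq\vartheta^2t^{2\gamma-1}\leq\vartheta^2
\quad\text{for } t\geq1.
\]
Hence \(\exp(-\delta_0(n-c_\ast t)^2/t)\geq e^{-\delta_0\vartheta^2}\), and any choice \(\delta_0>0\) works after enlarging \(C\). This yields \eqref{eq:E-new} for \(t\geq T_0\).

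\textbf{Main obstacle.} The only delicate point is the uniformity of the remainder in Step 2. One must choose \(\varepsilon\) so that on the inner region \(t|k|^3\to0\); this justifies expanding \(e^{t(\psi+ic_\ast k+s_\ast k^2/2)}\) to first order with a remainder \(O(tk^4+t^2k^6)\), and the latter integrates to \(t^{-3/2}\). If one instead wanted genuine Gaussian decay for \(|n-c_\ast t|\) of order \(\sqrt t\) or larger, one would first shift the contour to \(k\mapsto k+i\eta\) with \(\eta\sim(n-c_\ast t)/(s_\ast t)\). That refinement is not needed in the window \(|n-c_\ast t|\leq\vartheta t^\gamma\).
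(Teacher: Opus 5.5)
Your proof is correct, but it takes a different route from the paper. The paper does not compute anything. It rescales time by $\tau=A_\ast t$ with $A_\ast=\sqrt{a_\ast b_\ast}$, rewrites $\mathscr L_\ast$ as $e^{\rho_\ast}$ times the discrete Laplacian minus the drift term $(c_\ast/A_\ast)(z_{n+1}-z_n)$, and imports the expansion of Besse, Faye, Roquejoffre and Zhang (their Proposition~4.1). It then translates the self-similar variable and the correction term back to the present notation, asserting the coefficient matching rather than recomputing it.

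Your Fourier--Edgeworth argument is self-contained and explains the coefficients directly. The tilted walk has cumulants $\kappa_1=\kappa_3=c_\ast t$ and $\kappa_2=s_\ast t$, so the Hermite correction $\frac{c_\ast}{6s_\ast^2t}(X^3-3X)\mathfrak g(X)$ comes out without any dictionary between parametrizations. Your $O(t^{-3/2})$ remainder is also uniform in all $n\in\ZZ$.

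What the citation buys in principle is a remainder with genuine Gaussian decay in wider regions. In your approach that would need the contour shift you mention. As you correctly observe, however, in the window $|n-c_\ast t|\le\vartheta t^\gamma$ with $\gamma<1/2$ the factor $\exp(-\delta_0(n-c_\ast t)^2/t)$ is bounded below by $e^{-\delta_0\vartheta^2}$. So the Gaussian weight in the error bound is cosmetic there, and your uniform bound suffices. The paper itself only uses the proposition in windows of this type.

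One small slip: $\frac{1}{2\pi}\int_{\RR}(iu)^3e^{iuX-u^2/2}\,du=\mathfrak g'''(X)$, not $-\mathfrak g'''(X)$. Your integrand is $iu^3=-(iu)^3$, so the value $(X^3-3X)\mathfrak g(X)$ you obtain for the correction term is nonetheless correct.
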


\begin{proof}
We use \cite[Proposition~4.1]{Besse-Faye-Roquejoffre-Zhang-2023-TAMS}
only in the moderate-deviation regime
\[
|n-c_\ast t|\leq\vartheta t^\gamma,
\qquad 0<\gamma<\frac12.
\]
Indeed, the corresponding self-similar variable tends to zero uniformly:
\[
\left|\frac{n-c_\ast t}{\sqrt{s_\ast t}}\right|
\leq \frac{\vartheta}{\sqrt{s_\ast}}t^{\gamma-1/2}\longrightarrow0.
\]
We now verify the correspondence of the coefficients under a fixed time
rescaling. Set
\[
\rho_\ast
:=
\frac12\log\frac{a_\ast}{b_\ast}
=
\lambda_\ast-\frac12\log q,
\qquad
A_\ast
:=
\sqrt{a_\ast b_\ast}
=
\alpha\sqrt q.
\]
Then
\[
a_\ast=A_\ast e^{\rho_\ast},
\qquad
b_\ast=A_\ast e^{-\rho_\ast}.
\]
Hence, under the time rescaling
\[
\tau=A_\ast t,
\]
the equation
\[
\partial_t z_n
=
a_\ast(z_{n-1}-z_n)
+
b_\ast(z_{n+1}-z_n),
\]
becomes
\[
\partial_\tau z_n
=
e^{\rho_\ast}(z_{n-1}-z_n)
+
e^{-\rho_\ast}(z_{n+1}-z_n).
\]
Equivalently,
\[
\partial_\tau z_n
=
e^{\rho_\ast}
(z_{n-1}-2z_n+z_{n+1})
-
\left(e^{\rho_\ast}-e^{-\rho_\ast}\right)
(z_{n+1}-z_n).
\]
Since
\[
e^{\rho_\ast}-e^{-\rho_\ast}
=
\frac{c_\ast}{A_\ast},
\]
we obtain
\[
\partial_\tau z_n
=
e^{\rho_\ast}
(z_{n-1}-2z_n+z_{n+1})
-
\frac{c_\ast}{A_\ast}(z_{n+1}-z_n),
\]
which is precisely the form considered in
\cite[Section 3]{Besse-Faye-Roquejoffre-Zhang-2023-TAMS}, with
\[
\lambda_\ast^{\rm BFRZ}=\rho_\ast,
\qquad
c_\ast^{\rm BFRZ}=\frac{c_\ast}{A_\ast}.
\]

It remains only to translate their variables back to the present
notation. Since
\[
c_\ast^{\rm BFRZ}\tau=c_\ast t
\]
and
\[
2\cosh(\rho_\ast)\tau
=
(a_\ast+b_\ast)t
=
s_\ast t,
\]
their self-similar variable becomes
\[
X
=
\frac{n-c_\ast t}{\sqrt{s_\ast t}}.
\]
Moreover, their first correction term becomes
\[
\frac{c_\ast}{6s_\ast^2t}
(X^3-3X)\mathfrak g(X).
\]
Finally, the remainder estimate is preserved under the rescaling
\(\tau=A_\ast t\), after modifying the constants \(C\), \(\delta_0\),
and \(T_0\).

Therefore, their expansion gives
\eqref{eq:Green-decomp-new}--\eqref{eq:E-new}.
\end{proof}

\subsection{The dipole scale}

Fix a nontrivial compactly supported odd sequence \(w^0\) on \(\ZZ\) such
that
\begin{equation}\label{eq:odd-data-new}
w_n^0=-w_{-n}^0\geq0,
\qquad n\geq1,
\end{equation}
and let \(w\) solve \eqref{eq:critical-linear-whole-line-new} with datum
\(w^0\).  If \(\operatorname{supp}w^0\subset[-J,J]\), then
\begin{equation}\label{eq:dipole-representation-new}
w_n(t)
=
\sum_{\ell=1}^Jw_\ell^0
\bigl(\mathcal G_{n-\ell}(t)-\mathcal G_{n+\ell}(t)\bigr).
\end{equation}
Set
\[
M_1:=\sum_{\ell=1}^J\ell w_\ell^0>0.
\]

\begin{lemma}[Linear dipole]\label{lem:critical-dipole-new}
Let \(0<\delta<\gamma<1/2\) and \(\vartheta>0\).  Uniformly in
\begin{equation}\label{eq:dipole-asymptotic-region-new}
t^\delta\leq y_n(t):=n-c_\ast t\leq\vartheta t^\gamma,
\end{equation}
one has
\begin{equation}\label{eq:dipole-asymptotic-new}
w_n(t)
=
\frac{2M_1}{\sqrt{2\pi}s_\ast^{3/2}}
\frac{y_n(t)}{t^{3/2}}
\exp\left(-\frac{y_n(t)^2}{2s_\ast t}\right)
\bigl(1+o(1)\bigr).
\end{equation}
In addition, for every \(L>0\) there exist \(T_L>0\) and constants
\(0<c_L<C_L\) such that
\begin{equation}\label{eq:dipole-positive-new}
w_n(t)>0
\end{equation}
and
\begin{equation}\label{eq:dipole-two-sided-new}
c_L\frac{y_n(t)}{t^{3/2}}
 e^{-C_Ly_n(t)^2/t}
\leq
w_n(t)
\leq
C_L\frac{y_n(t)}{t^{3/2}}
 e^{-y_n(t)^2/(C_Lt)}
\end{equation}
whenever
\[
t\geq T_L,
\qquad
1\leq y_n(t)\leq L\sqrt t.
\]
Finally, for every \(\gamma<1/2\),
\begin{equation}\label{eq:dipole-rough-new}
|w_n(t)|
\leq
C\frac{|y_n(t)|+1}{t^{3/2}}
\end{equation}
uniformly for \(|y_n(t)|\leq t^\gamma\) and large \(t\).
\end{lemma}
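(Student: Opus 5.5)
The plan is to work directly from the dipole representation \eqref{eq:dipole-representation-new}. Each difference $\mathcal G_{n-\ell}(t)-\mathcal G_{n+\ell}(t)$, with $1\leq\ell\leq J$ fixed, is a centered discrete derivative of the Green function. The idea is to insert the decomposition of Proposition~\ref{prop:Green-refined-asymptotics-new} and Taylor expand in $n$.

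\textbf{Step 1: the asymptotics \eqref{eq:dipole-asymptotic-new}.} Write $\mathcal H_n(t)=h(t,X)$ with $X=y_n(t)/\sqrt{s_\ast t}$. A shift $n\mapsto n\mp\ell$ moves $X$ by $\mp\ell/\sqrt{s_\ast t}$. Expanding to third order gives
\[
\mathcal H_{n-\ell}-\mathcal H_{n+\ell}
=-\frac{2\ell}{\sqrt{s_\ast t}}\,\partial_X h(t,X)+O\bigl(t^{-2}\bigr),
\]
uniformly for $|X|\lesssim t^{\gamma-1/2}$. The contributions are as follows.
\begin{itemize}
\item The leading part $(s_\ast t)^{-1/2}\mathfrak g(X)$ gives
\[
-\partial_X\mathfrak g=X\mathfrak g,
\qquad\text{hence}\qquad
\frac{2\ell\, y_n}{s_\ast^{3/2}t^{3/2}}\,\mathfrak g(X).
\]
\item The Edgeworth term $\frac{c_\ast}{6s_\ast^2t}(X^3-3X)\mathfrak g$ contributes $O(t^{-3/2})$.
\item The remainder $\mathcal E$ contributes $O(t^{-3/2})$ by \eqref{eq:E-new}.
\end{itemize}
Summing over $\ell$ with weights $w^0_\ell$ produces $M_1$. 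Since $y_n\geq t^\delta\to\infty$, both $O(t^{-3/2})$ terms are $o\bigl(y_n t^{-3/2}\bigr)$. Finally, $\mathfrak g(X)=\frac{1}{\sqrt{2\pi}}e^{-y_n^2/(2s_\ast t)}$, which yields \eqref{eq:dipole-asymptotic-new}.

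\textbf{Step 2: the rough bound \eqref{eq:dipole-rough-new}.} This follows from the same expansion: the leading term is $O(|y_n|t^{-3/2})$ and everything else is $O(t^{-3/2})$.

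\textbf{Step 3: positivity and the two-sided bounds \eqref{eq:dipole-positive-new}--\eqref{eq:dipole-two-sided-new}.} This is the main obstacle, for two reasons.
\begin{itemize}
\item The range $1\leq y_n\leq L\sqrt t$ leaves the moderate-deviation regime of Proposition~\ref{prop:Green-refined-asymptotics-new}.
\item For $y_n=O(1)$, the leading dipole term and the $O(t^{-3/2})$ errors are of the same size, so the crude bound on $\mathcal E$ cannot decide the sign.
\end{itemize}
I would therefore bypass $\mathcal E$ and use the Fourier representation
\[
\mathcal G_n(t)=\frac1{2\pi}\int_{-\pi}^{\pi}e^{-in\theta}\,e^{t(a_\ast e^{i\theta}+b_\ast e^{-i\theta}-s_\ast)}\,d\theta .
\]
This gives
\[
\mathcal G_{n-\ell}-\mathcal G_{n+\ell}=\frac{1}{2\pi}\int_{-\pi}^{\pi}2i\sin(\ell\theta)\,e^{-in\theta}\,e^{t(\cdots)}\,d\theta .
\]
The factor $\sin(\ell\theta)=\ell\theta+O(\theta^3)$ supplies the extra vanishing at $\theta=0$. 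I would then run the standard Laplace/stationary-phase analysis:
\begin{itemize}
\item $|\theta|\geq\eta$ gives exponentially small contributions;
\item near $0$, write $t(\cdots)=ic_\ast t\theta-\frac{s_\ast t}{2}\theta^2-i\frac{c_\ast t}{6}\theta^3+O(t\theta^4)$ and rescale $\theta=\phi/\sqrt{t}$.
\end{itemize}
The aim is
\[
w_n(t)=\frac{2M_1}{s_\ast^{3/2}t^{3/2}}\bigl(y_n+\kappa_0\bigr)\mathfrak g(X)\bigl(1+O(X^2)\bigr)+O\bigl(t^{-2}\bigr),
\]
uniformly for $|X|\leq L$. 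Here $\kappa_0$ is a constant coming from the cubic (drift) term, which is the discrete derivative of the Edgeworth correction.

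\textbf{Sign of $\kappa_0$.} At $X=0$ one has $\partial_X\bigl[(X^3-3X)\mathfrak g\bigr]=-3\mathfrak g(0)$. After the factor $-2\ell$ this suggests $\kappa_0\geq0$; this sign must be checked. Should it fail, I would instead take $y_n\geq K$ with $K$ large and absorb $\kappa_0$ there. Alternatively, I would compare with the exact Bessel form
\[
\mathcal G_n(t)=e^{-s_\ast t}(a_\ast/b_\ast)^{n/2}I_n\bigl(2\sqrt{a_\ast b_\ast}\,t\bigr)
\]
and use monotonicity of $I_m$ in $m$ together with the appendix Bessel estimates.

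\textbf{Conclusion of Step 3.} Once the sign is settled, $y_n+\kappa_0\asymp y_n$ for $y_n\geq1$, and the $O(t^{-2})$ error is dominated for $t\geq T_L$. Bounding $\mathfrak g(X)$ above and below by Gaussians with adjusted constants, for $|X|\leq L$, then yields \eqref{eq:dipole-two-sided-new} and hence positivity.
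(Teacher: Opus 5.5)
Your Steps 1 and 2 are the paper's argument. You insert Proposition~\ref{prop:Green-refined-asymptotics-new} into the dipole representation; the shifted sites $n\pm\ell$ stay in a window $|m-c_\ast t|\leq 2\vartheta t^\gamma$, which you should say explicitly. You read off the leading term and note that the Edgeworth correction and $\mathcal E$ contribute $O(t^{-3/2})=o(y_n t^{-3/2})$ once $y_n\geq t^\delta$.

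For positivity and the two-sided bounds on $1\leq y_n\leq L\sqrt t$, the paper does no computation. It rescales time by $\tau=A_\ast t$ and imports the odd-data dipole estimate of \cite[Section~4.2]{Besse-Faye-Roquejoffre-Zhang-2023-TAMS}. Your Step 3 replaces that citation with a direct Fourier--Laplace computation, and it works. The integral is exact and $|e^{t(a_\ast e^{i\theta}+b_\ast e^{-i\theta}-s_\ast)}|=e^{s_\ast t(\cos\theta-1)}$, so the expansion is uniform in $n$. Keeping the cubic phase term against the factor $2i\ell\phi/\sqrt t$ from $\sin(\ell\theta)$ gives, uniformly in $n$,
\[
w_n(t)=\frac{2M_1}{s_\ast^{3/2}t^{3/2}}\Bigl[y_n+\frac{c_\ast}{6s_\ast}\bigl(X^4-6X^2+3\bigr)\Bigr]\mathfrak g(X)+O\bigl(t^{-2}\bigr).
\]

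Two points in your write-up need fixing.

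\emph{Form of the correction.} The correction is additive, $\kappa_0=\kappa_0(X)$, not a multiplicative factor $1+O(X^2)$. Such a factor is not small when $|X|\sim L$, and read literally it would spoil the positivity argument.

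\emph{Sign.} The sign is not in doubt: $\kappa_0(0)=c_\ast/(2s_\ast)>0$ because $c_\ast>0$. More precisely, $\kappa_0(X)\geq\frac{c_\ast}{2s_\ast}-\frac{c_\ast}{s_\ast}X^2$ and $X^2\leq Ly_n/(s_\ast\sqrt t)$. Hence $y_n+\kappa_0(X)\geq y_n/2$ on $1\leq y_n\leq L\sqrt t$ for large $t$, which yields \eqref{eq:dipole-positive-new}--\eqref{eq:dipole-two-sided-new}. Your fallbacks would not have rescued a wrong sign. Restricting to $y_n\geq K$ proves less than the lemma claims. Monotonicity of $I_m$ in $m$ cannot decide the sign, because near $n=c_\ast t$ one has $I_{n+1}/I_n\approx\sqrt{b_\ast/a_\ast}$, which cancels the tilt $(a_\ast/b_\ast)^{1/2}$ to leading order.

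As for what each route buys: the paper's is shorter but rests entirely on transferring the BFRZ result through the time rescaling. Yours is self-contained and produces the explicit shift $c_\ast/(2s_\ast)$. It also shows why positivity holds down to $y_n\geq1$: the skewness of the tilted walk pushes the zero of the dipole to the left of $c_\ast t$. Finally, your expansion gives Steps 1--2 directly, without Proposition~\ref{prop:Green-refined-asymptotics-new}.
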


For the sake of brevity of the text, we only indicate the modifications needed with respect to the argument in
\cite[Section~4.2]{Besse-Faye-Roquejoffre-Zhang-2023-TAMS}.

\begin{proof}
By the oddness of \(w^0\),
\[
w_n(t)
=
\sum_{\ell=1}^J
w_\ell^0
\bigl(
\mathcal G_{n-\ell}(t)-\mathcal G_{n+\ell}(t)
\bigr).
\]
Set
\[
y:=y_n(t)=n-c_\ast t.
\]
Since $\ell\in\{1,\ldots,J\}$ is fixed, the points $n-\ell$ and
$n+\ell$ remain in a moderate-deviation region of the form
\[
|m-c_\ast t|\leq 2\vartheta t^\gamma
\]
for all sufficiently large $t$. Hence,
Proposition~\ref{prop:Green-refined-asymptotics-new} applies to both
$\mathcal G_{n-\ell}(t)$ and $\mathcal G_{n+\ell}(t)$.
Therefore,
\[
\mathcal G_{n-\ell}(t)-\mathcal G_{n+\ell}(t)
=
\frac{2\ell}{\sqrt{2\pi}\,s_\ast^{3/2}}
\frac{y}{t^{3/2}}
\exp\left(
-\frac{y^2}{2s_\ast t}
\right)
+
O(t^{-3/2}),
\]
uniformly for
\[
|y|\leq \vartheta t^\gamma,
\qquad
\gamma<\frac12.
\]
Indeed, the leading Gaussian term gives the displayed contribution,
whereas the first correction term and the remainder in
Proposition~\ref{prop:Green-refined-asymptotics-new} are both
\(O(t^{-3/2})\) in this region.

Therefore,
\[
w_n(t)
=
\frac{2M_1}{\sqrt{2\pi}\,s_\ast^{3/2}}
\frac{y}{t^{3/2}}
\exp\left(
-\frac{y^2}{2s_\ast t}
\right)
+
O(t^{-3/2}),
\]
where
\[
M_1=\sum_{\ell=1}^J\ell w_\ell^0>0.
\]
If
\[
t^\delta\leq y\leq\vartheta t^\gamma,
\qquad
0<\delta<\gamma<\frac12,
\]
then
\[
\frac{y^2}{t}\to0,
\]
uniformly and, since \(y\geq t^\delta\) and \(y^2/t\to0\) uniformly in the region under
consideration,
\[
\frac{
t^{-3/2}
}{
\dfrac{y}{t^{3/2}}
\exp\left(-\dfrac{y^2}{2s_\ast t}\right)
}
=
\frac1y
\exp\left(\frac{y^2}{2s_\ast t}\right)
\to0,
\]
uniformly as \(t\to\infty\). Hence
\[
w_n(t)
=
\frac{2M_1}{\sqrt{2\pi}\,s_\ast^{3/2}}
\frac{y_n(t)}{t^{3/2}}
\exp\left(
-\frac{y_n(t)^2}{2s_\ast t}
\right)
\bigl(1+o(1)\bigr),
\]
uniformly in \eqref{eq:dipole-asymptotic-region-new}.

Moreover, for every $\widetilde\gamma\in(0,1/2)$, there exists
$C_{\widetilde\gamma}>0$ such that
\[
|w_n(t)|
\leq
C_{\widetilde\gamma}
\frac{|y_n(t)|+1}{t^{3/2}},
\]
uniformly for
\[
|y_n(t)|\leq t^{\widetilde\gamma}
\]
and all sufficiently large $t$.
Thus, \eqref{eq:dipole-rough-new} is proved.

The preceding asymptotic expansion yields positivity only in the region
where $y_n(t)\to\infty$. To obtain positivity and uniform two-sided
bounds down to the scale $y_n(t)\geq1$, we use the odd-data dipole
argument of \cite[Section~4.2]{Besse-Faye-Roquejoffre-Zhang-2023-TAMS}.
After the fixed time rescaling
\[
\tau=A_\ast t,
\qquad
A_\ast=\sqrt{a_\ast b_\ast},
\]
the equation generated by $\mathscr L_\ast$ takes the form considered
there. Returning to the original time variable changes only the
positive constants. Hence, for every $L>0$, there exist $T_L>0$ and
constants $0<c_L<C_L$ such that
\[
w_n(t)>0,
\]
and
\[
c_L\frac{y_n(t)}{t^{3/2}}
\exp\left(
-C_L\frac{y_n(t)^2}{t}
\right)
\leq
w_n(t)
\leq
C_L\frac{y_n(t)}{t^{3/2}}
\exp\left(
-\frac{y_n(t)^2}{C_Lt}
\right)
\]
whenever
\[
t\geq T_L,
\qquad
1\leq y_n(t)\leq L\sqrt t.
\]
This proves the result.
\end{proof}

\begin{remark}
The restriction
\[
y_n(t)\geq t^\delta,
\]
in \eqref{eq:dipole-asymptotic-new} is deliberate. The remainder in the
local central limit expansion is only controlled at order \(t^{-3/2}\).
Therefore, obtaining a relative asymptotic when
\(y_n(t)\) remains bounded would require a higher-order expansion of the
Green function. Such a refinement is not needed in what follows.
\end{remark}

\subsection{Transfer of the dipole scale to the nonlinear leading edge}

The moving-domain comparison argument follows the strategy of
\cite{Besse-Faye-Roquejoffre-Zhang-2023-TAMS}; the modifications required
in the present setting are collected in
Appendix~\ref{Appendix:Nonlinear:Barriers}. Its consequence is the
following.

\begin{proposition}[Intermediate leading edge]
\label{prop:leading-edge-new}
Assume that $u_0$ is nontrivial, radial, nonnegative, and compactly
supported. Then there exist \(\eta\in(0,1/4)\), \(T_\eta>0\), and constants
\(0<c_0<C_0\) such that
\begin{equation}\label{eq:leading-edge-v-new}
c_0\frac{n-c_\ast t}{t^{3/2}}
\leq
v_n(t)
\leq
C_0\frac{n-c_\ast t}{t^{3/2}}, 
\end{equation}
for \(t\geq T_\eta\) and
\begin{equation}\label{eq:leading-edge-region-new}
\frac12t^\eta
\leq
n-c_\ast t
\leq
2t^\eta.
\end{equation}
Equivalently,
\begin{equation}\label{eq:leading-edge-u-new}
c_0\frac{n-c_\ast t}{t^{3/2}}
e^{-\lambda_\ast(n-c_\ast t)}
\leq
u_n(t)
\leq
C_0\frac{n-c_\ast t}{t^{3/2}}
e^{-\lambda_\ast(n-c_\ast t)}.
\end{equation}
\end{proposition}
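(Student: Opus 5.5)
We prove the two bounds in \eqref{eq:leading-edge-v-new} separately, both by comparison for the conjugated equation of Lemma~\ref{lem:critical-conjugation-new} on a moving half-line $\{n\geq c_\ast t+X(t)\}$. On such a domain only the bulk equation is involved, and the dipole $w$ of Lemma~\ref{lem:critical-dipole-new} is the basic building block for the barriers. Throughout, $\eta\in(0,1/4)$ is taken small, and $\delta<\gamma<1/2$ are chosen so that $\delta<\eta<\gamma$. Then the region \eqref{eq:leading-edge-region-new} lies inside \eqref{eq:dipole-asymptotic-region-new}, and there $w_n(t)\asymp y_n(t)t^{-3/2}$, since $y^2/t\to0$.

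\emph{Upper bound.} The nonlinear term $-\beta e^{-\lambda_\ast(n-c_\ast t)}v_n^2$ is nonpositive, so $v$ is a subsolution of $\partial_t z=\mathscr L_\ast z$ on $n\geq1$. The naive supersolution (the conjugated linear kernel) grows like $t^{-1/2}$ and gives the wrong power. I would instead work on the moving domain $\{n\geq c_\ast t-t^{\eta'}\}$ with some $\eta'<\eta$, which lies in the bulk for large $t$.

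Step one is an a priori left boundary bound. Since $u\leq1$,
\[
v_n(t)\leq e^{\lambda_\ast(n-c_\ast t)}\leq e^{-\lambda_\ast t^{\eta'}}
\]
on the left boundary $n=c_\ast t-t^{\eta'}$. This is super-polynomially small.

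Step two is a supersolution of the form $\bar v_n(t)=A\,w_{n-\bar X(t)}(t+t_0)+B e^{-t^{\eta'}/2}$, with a slowly varying shift $\bar X(t)\approx -t^{\eta'}$. The shift makes the dipole vanish near the left boundary, and the small correction dominates the boundary data there. The initial ordering at a large time $t_0$ comes from the crude Gaussian tail bound of Theorem~\ref{Thm:two-sided-homogeneous} on $u$, once $A$ is taken large. Because $w_{n-\bar X}$ differs from $w_n$ in the region \eqref{eq:leading-edge-region-new} only by a factor $1+o(1)$ (as $t^{\eta'}\ll t^{\eta}$), this gives $v_n\leq C_0 y_n t^{-3/2}$.

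\emph{Lower bound.} Now we must dominate the quadratic term. In the region $y\geq t^{\eta'}$, the upper bound just proved gives
\[
\beta e^{-\lambda_\ast y}v^2\leq e^{-\lambda_\ast y/2}v,
\]
so the nonlinearity is an exponentially small relative perturbation. I would use a subsolution $\underline v_n=\varepsilon\bigl(w_{n-X(t)}(t)-t^{-3/2-\kappa}\phi\bigr)$ on $\{y\geq t^{\eta'}\}$, with $X(t)=t^{\eta'}$. It vanishes or is negative on the left boundary and is dominated by $v$ initially.

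Here the key input is the positivity of $v$ near the left boundary at times $t\geq T$, which follows from $u$ being positive at finite times. We also need $v$ to be not too small at $y\sim t^{\eta'}$. For this I would first prove that $u$ stays bounded below by a fixed constant on $n\leq c_\ast t-C\log t$, by comparison with compactly supported subsolutions built from the propagation of Theorem~\ref{Thm:two-sided-homogeneous} and Proposition~\ref{prop:critical-speed}. This gives $v\geq e^{-\lambda_\ast C\log t}$ there, so $\varepsilon$ can absorb the polynomial loss.

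\emph{Main obstacle.} The hard step is matching the left boundary data for the lower barrier: the dipole lower bound of Lemma~\ref{lem:critical-dipole-new} only controls $y\geq1$. I would handle it as BFRZ do, by choosing the boundary $X(t)$ so that the barrier is nonpositive there, and by absorbing the error from the $t^{-3/2}$ remainder in \eqref{eq:dipole-rough-new} into the shift. The root plays no role, since all domains stay in $n\geq1$ for $t\geq T_\delta$.
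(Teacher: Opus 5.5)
Your overall architecture (dipole-based barriers on moving half-lines that stay in the bulk $n\geq1$) is the one the paper uses. However, the barriers you write down are not sub- or supersolutions, and that is where the real work lies.

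A time-dependent shift of the dipole, $\overline v_n=A\,w_{n-\overline X(t)}(t+t_0)$, is not an exact solution. It adds the transport term $-A\,\overline X'(t)\,\partial_n w$ to $(\partial_t-\mathscr L_\ast)\overline v$. Since $w$ lives on $\ZZ$, an integer-valued shift instead produces jumps, and these carry the same sign. With $\overline X'<0$ this term is negative wherever $w$ decreases in $n$, i.e.\ for $y\gtrsim\sqrt{s_\ast t}$, where it is of order $t^{\eta'-1}t^{-3/2}$. For your lower barrier ($X'>0$) the term has the wrong sign in the same region.

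Nothing in your barriers absorbs this error. Worse, $Be^{-t^{\eta'}/2}$ is constant in $n$ and decreasing in $t$, so $(\partial_t-\mathscr L_\ast)(Be^{-t^{\eta'}/2})<0$: it is a subsolution term. It is also far too small to offset the $O(t^{-3/2})$ values of indefinite sign that $w$ takes near its node, which is exactly where you put the lateral boundary.

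The paper does not shift at all. It keeps $w$ unshifted and adds the corrector $p_n(t)=T^{-3/2+\nu}\cos(y_n(t)/T^\gamma)$. Because $\gamma<1/2$, its residual is positive, of size about $\frac{s_\ast}{2}T^{-3/2+\nu-2\gamma}\cos\vartheta$. Taking $\delta<\nu<\eta$, this corrector dominates the $O(t^{-3/2+\delta})$ values of $w$ on the lateral layers $|y_n(t)|\approx t^\delta$. So $\overline v$ stays above $v\leq e^{-\lambda_\ast t^\delta}$ there and $\underline v\leq0$ there. Yet the corrector is negligible against $w\asymp t^{-3/2+\eta}$ in the target region. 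The amplitudes $\xi_\pm(t)$ and the far-field term $\Xi$ handle the remaining errors (Lemmas~\ref{lem:upper-barrier-new} and \ref{lem:lower-barrier-new}).

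Two further steps of your lower bound fail.

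First, the initial ordering $\varepsilon\,w_{n-X(t_0)}(t_0)\leq v_n(t_0)$ is false for large $n$, whatever $\varepsilon>0$. At fixed $t_0$ both sides have factorial tails of the form $e^{\lambda_\ast n}(\alpha t_0)^{n-k}/(n-k)!$, up to $t_0$-dependent constants. Here $k=R$ for $v$ (with $R$ the support radius of $u_0$) and $k=X(t_0)+J$ for the shifted dipole. Their ratio therefore grows like $n^{X(t_0)+J-R}$. The paper circumvents this with a unit time lag: comparison on the fixed half-line $n\geq1$ gives $v_n(T_-+1)\geq c_0w_n(T_-)$, and one then compares $\kappa_-\underline v_n(t-1)$ with $v_n(t)$.

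Second, your ``key input'' that $u\geq c$ on $n\leq c_\ast t-C\log t$ is essentially the lower half of Theorem~\ref{Thm:Bramson}. Compactly supported subsolutions built from the linear kernel only give such a bound on $n\leq(c_\ast-\epsilon)t$, so using it here is circular. Moreover, it would control $v$ only at $y_n(t)\leq-C\log t$, not at $y_n(t)\sim t^{\eta'}$ where your barrier's boundary sits. It is also unnecessary once the barrier is made nonpositive on its lateral layer, as $p^-$ does in the paper.
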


\begin{proof}
We only indicate the points that differ from \cite{Besse-Faye-Roquejoffre-Zhang-2023-TAMS}.
The barriers constructed in
Appendix~\ref{Appendix:Nonlinear:Barriers} have as their principal
linear term the dipole \(w\) associated with \(\mathscr L_\ast\).
By Lemma~\ref{lem:critical-dipole-new},
\[
w_n(t)\asymp
\frac{n-c_\ast t}{t^{3/2}},
\]
uniformly in \eqref{eq:leading-edge-region-new}. Indeed, in this region
\(n-c_\ast t\asymp t^\eta\) and
\[
\frac{(n-c_\ast t)^2}{t}\longrightarrow0.
\]

The remaining correction terms in the barriers are of lower order in
this region. Moreover, since
\[
n=c_\ast t+O(t^\eta),
\qquad c_\ast>0,
\]
the moving domain lies entirely in the bulk \(n\geq1\) for all
sufficiently large \(t\). Thus, the exceptional equation at the root
does not enter the comparison argument.

The upper and lower barriers therefore give
\eqref{eq:leading-edge-v-new}. Finally, using
\[
u_n(t)
=
e^{-\lambda_\ast(n-c_\ast t)}v_n(t), 
\]
gives \eqref{eq:leading-edge-u-new}.
\end{proof}

\subsection{ The critical traveling front}

The translation-invariant bulk equation admits traveling fronts. We shall
only need the critical one and its behavior at the unstable equilibrium.

\begin{lemma}[Critical front and its tail]
\label{lem:critical-front-new}
Assume that
\[
\beta>\alpha(\sqrt q-1)^2.
\]
Then there exists a strictly decreasing function
\[
U_\ast\in C^1(\RR)
\]
satisfying

\begin{equation}\label{eq:critical-wave-new}
-c_\ast U_\ast'(z)
=
\alpha\bigl[
U_\ast(z-1)+qU_\ast(z+1)-(q+1)U_\ast(z)
\bigr]
+
\beta U_\ast(z)(1-U_\ast(z)),
\end{equation}
with
\[
U_\ast(-\infty)=1,
\qquad
U_\ast(+\infty)=0.
\]
Moreover, there exists \(C_\ast>0\) such that
\begin{equation}\label{eq:critical-front-tail-C-new}
U_\ast(z)
=
C_\ast z e^{-\lambda_\ast z}(1+o(1)),
\qquad z\to+\infty.
\end{equation}
After a translation of the profile, we may normalize \(C_\ast=1\).
Moreover, there exists \(C>0\) such that
\[
 |U_\ast'(z)|\leq C(1+z)e^{-\lambda_\ast z},\qquad z\geq0.
\]
\end{lemma}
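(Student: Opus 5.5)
Existence will follow from the general theory of traveling waves for monostable lattice equations with asymmetric, nonlocal-in-space coupling, specialised to $-c U' = \alpha[U(z-1)+qU(z+1)-(q+1)U]+f(U)$ with $f(u)=\beta u(1-u)$. The plan is the standard sub/super-solution plus monotone iteration scheme, carried out in three steps.

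\emph{Step 1 (barriers).} Set $\overline U(z):=\min\{1,\,z_+e^{-\lambda_\ast z}+Ke^{-\lambda_\ast z}\}$, or more simply $\min\{1,(A+z)e^{-\lambda_\ast z}\}$ for large $A$. Plugging $e^{-\lambda z}$ into the linearized wave operator
\[
\mathcal N[U]:=c_\ast U'+\alpha[U(z-1)+qU(z+1)-(q+1)U]+\beta U
\]
yields the characteristic function $P(\lambda)=D(\lambda)-c_\ast\lambda$, with $D$ as in the proof of Proposition~\ref{prop:critical-speed}. By that proposition, $P\geq0$, $P(\lambda_\ast)=P'(\lambda_\ast)=0$ and $P''(\lambda_\ast)=D''(\lambda_\ast)>0$; thus $\lambda_\ast$ is a double root. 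Hence both $e^{-\lambda_\ast z}$ and $ze^{-\lambda_\ast z}$ solve the linear equation exactly, which makes $(A+z)e^{-\lambda_\ast z}$ a linear solution. Since $f(u)\le\beta u$, the truncation by $1$ gives a supersolution. For the subsolution, take
\[
\underline U(z)=\max\{0,(A+z)e^{-\lambda_\ast z}-Bz^{1/2}e^{-\lambda_\ast z}\}
\]
(or $(z-Bz^{1/2})e^{-\lambda_\ast z}$). Applied to $z^{1/2}e^{-\lambda_\ast z}$, the operator produces $-\tfrac18 D''(\lambda_\ast)z^{-3/2}e^{-\lambda_\ast z}(1+o(1))$ from the second-order Taylor expansion, and this term dominates the quadratic loss $\beta U^2\sim z^2e^{-2\lambda_\ast z}$ for large $z$, provided $B$ is large. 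The argument is local, so the lattice shifts $z\pm1$ have to be handled with care near the points where $\max$ or $\min$ become active: there the pointwise min/max preserves the super/sub inequalities because the operator is cooperative (its off-diagonal coefficients $\alpha,\alpha q$ are positive).

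\emph{Step 2 (existence).} Run the monotone iteration $U_{k+1}=(\mu-c_\ast\partial_z)^{-1}[\ldots]$ with $\mu$ large, as in Zinner--Harris--Hudson or Chen--Guo \cite{Zinner-Harris-Hudson-1993-JDE,Chen-Guo-2003-MathAnn}. Starting from $\overline U$, this produces a nonincreasing solution $U_\ast$ with $\underline U\le U_\ast\le\overline U$. Since $U_\ast\ge\underline U\not\equiv0$, the limits satisfy $U_\ast(+\infty)=0$ by the upper barrier, while $U_\ast(-\infty)$ is a positive zero of $f$, namely $1$. Strict monotonicity follows from the strong maximum principle applied to $U_\ast(\cdot-h)-U_\ast$. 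The $C^1$ regularity is read off the equation, since $c_\ast\neq0$.

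\emph{Step 3 (tail asymptotics and $C_\ast$).} This is the main obstacle. The barriers only give $U_\ast\asymp ze^{-\lambda_\ast z}$. To obtain the precise form $C_\ast z e^{-\lambda_\ast z}$, I would apply an Ikehara-type Tauberian theorem, as in Chen--Guo \cite{Chen-Guo-2003-MathAnn}, to the two-sided Laplace transform $\int e^{\lambda z}U_\ast(z)\,dz$. This transform satisfies $P(\lambda)\hat U(\lambda)=\beta\widehat{U^2}(\lambda)$, where the right side is analytic beyond $\lambda_\ast$. Because the root of $P$ at $\lambda_\ast$ is exactly double, $\hat U$ has a double pole there, which yields $U_\ast(z)\sim C_\ast ze^{-\lambda_\ast z}$. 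That the coefficient is nonzero, and that no pure $e^{-\lambda_\ast z}$ leading term occurs, follows from the lower barrier. A translation $z\mapsto z+z_0$ rescales $C_\ast$ by $e^{-\lambda_\ast z_0}$ at the same order (up to the $o(1)$ term), so $C_\ast=1$ can be arranged. Finally, the derivative bound is read from the equation: $|c_\ast U_\ast'|\le \alpha(U_\ast(z-1)+qU_\ast(z+1)+(q+1)U_\ast(z))+\beta U_\ast$, and the tail bound then gives $|U_\ast'|\le C(1+z)e^{-\lambda_\ast z}$ for $z\ge0$.
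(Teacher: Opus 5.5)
Your proposal is correct in substance. Step 3 uses the same mechanism as the paper: the identity $P(\lambda)\widehat U(\lambda)=\beta\widehat{U^2}(\lambda)$, the exact double zero of $P=D-c_\ast\lambda$ at $\lambda_\ast$, an Ikehara-type theorem for monotone functions as in Carr--Chmaj, then translation, and finally reading $U_\ast'$ off the equation.

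The genuine difference is existence. The paper cites Al Haj--Monneau for a monotone front at the minimal speed and asserts that this speed is the linearly determined $c_\ast$. Knowing only an a priori bound $U_\ast\le Ce^{-\eta z}$, it then locates the abscissa of convergence of $\widehat U$ by contradiction. An abscissa beyond $\lambda_\ast$ would force $\beta\widehat{U^2}(\lambda_\ast)=0$. One below $\lambda_\ast$ would let $\beta\widehat{U^2}/P$ continue $\widehat U$ analytically through it, contradicting the singularity theorem for Laplace transforms of nonnegative functions.

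You instead construct the front directly at speed $c_\ast$ with critical barriers and monotone iteration. This costs a barrier computation but buys two things. First, you never need to identify an abstract minimal speed with $c_\ast$. Second, the sandwich $\underline U\le U_\ast\le\overline U$ gives $U_\ast\asymp ze^{-\lambda_\ast z}$ at once, so the abscissa is $\lambda_\ast$ and $\widehat{U^2}$ is analytic on $0<\Re\lambda<2\lambda_\ast$ with no further argument. In exchange, the paper's tail argument applies to any monotone front of speed $c_\ast$, not only the one you build.

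Three details need tightening.
\begin{itemize}
\item The variant $\min\{1,(A+z)e^{-\lambda_\ast z}\}$ is negative for $z<-A$, so it is not an admissible upper barrier. Your first variant $\min\{1,(z_++K)e^{-\lambda_\ast z}\}$ does work if $K\ge\max\{e^{\lambda_\ast},1/\lambda_\ast\}$. Then the exponential branch is active only for $z\ge1$, where it solves the linearized equation exactly, and the barrier is nonincreasing.
\item In the subsolution, drop $A$. Otherwise $\underline U(0)=A>0$ and $\sqrt{z-1}$ appears at negative arguments. With $(z-B\sqrt z)_+e^{-\lambda_\ast z}$ the active set is $z>B^2$. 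There, for $\phi(z)=\sqrt z$, the identity $e^{\lambda_\ast z}\mathcal N[\phi e^{-\lambda_\ast z}]=a_\ast(\phi(z-1)-\phi(z)+\phi'(z))+b_\ast(\phi(z+1)-\phi(z)-\phi'(z))$ and Taylor's formula give the exact bound $\mathcal N[-B\phi e^{-\lambda_\ast z}]\ge\frac{s_\ast B}{8}(z+1)^{-3/2}e^{-\lambda_\ast z}$, not merely an asymptotic one.
\item Ikehara's theorem also needs $P(\lambda)\neq0$ on $\Re\lambda=\lambda_\ast$ for $\lambda\neq\lambda_\ast$. This follows from $\Re P(\lambda_\ast+i\tau)=s_\ast(\cos\tau-1)$ and $\Im P(\lambda_\ast+i\tau)=c_\ast(\sin\tau-\tau)$, and you should state it.
\end{itemize}
Positivity of $C_\ast$ is automatic from $C_\ast=2\beta\widehat{U_\ast^2}(\lambda_\ast)/s_\ast$, so you do not need the lower barrier for that.
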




\begin{proof}
Existence of a monotone front at the minimal speed follows from the
general theory for discrete monostable equations; see
\cite{AlHaj-Monneau-2024-JDE}. Indeed, the off-diagonal couplings in
\eqref{eq:critical-wave-new} are
\[
\alpha>0,
\qquad
\alpha q>0,
\]
and
\[
f(u)=\beta u(1-u),
\]
satisfies the KPP condition. In this case the minimal velocity is
linearly determined and coincides with the variational speed obtained in
Proposition~\ref{prop:critical-speed}, namely \(c_\ast\). The strong
comparison principle gives the strict monotonicity of the profile.

We next determine its behavior at \(+\infty\). We adapt the
Laplace--Tauberian argument of
\cite{Carr-Chmaj-2004-PAMS} to the present asymmetric nearest-neighbor
operator.

Set
\[
\mathcal D(\lambda)
:=
D(\lambda)-c_\ast\lambda,\quad \mbox{where}\quad D(\lambda)
:=
\beta-\alpha(q+1)
+
\alpha\left(
e^\lambda+qe^{-\lambda}
\right).
\]
By the definition of \(c_\ast\),
\[
\mathcal D(\lambda)\geq0,
\qquad
\mathcal D(\lambda)=0
\iff
\lambda=\lambda_\ast,
\]
and the criticality relation gives
\begin{equation}\label{eq:D-critical-expansion-new}
\mathcal D(\lambda)
=
\frac{s_\ast}{2}
(\lambda-\lambda_\ast)^2
+
O(|\lambda-\lambda_\ast|^3),
\end{equation}
where
\[
s_\ast
=
D''(\lambda_\ast)
=
\alpha\left(
e^{\lambda_\ast}+qe^{-\lambda_\ast}
\right)>0.
\]

The exponential estimates used in the construction of the critical wave
give
\[
U_\ast(z)\leq Ce^{-\eta z},
\qquad z\geq0,
\]
for some \(C,\eta>0\). 
Since
\[
0<U_\ast(z)\leq1,
\qquad z\leq0,
\]
and
\[
U_\ast(z)\leq Ce^{-\eta z},
\qquad z\geq0,
\]
we have that the bilateral Laplace transform
\[
\widehat U(\lambda)
:=
\int_{\RR}e^{\lambda z}U_\ast(z)\,dz,
\]
is finite for every $0<\lambda<\eta$.

Define
\[
\Lambda
:=
\sup\left\{
\lambda>0:
\widehat U(\lambda)<\infty
\right\}.
\]

For \(0<\lambda<\Lambda\), multiplying
\eqref{eq:critical-wave-new} by \(e^{\lambda z}\) and integrating gives
\begin{equation}\label{eq:front-Laplace-identity-new}
\mathcal D(\lambda)\widehat U(\lambda)
=
\beta\widehat{U^2}(\lambda),
\end{equation}
where
\[
\widehat{U^2}(\lambda)
:=
\int_{\RR}e^{\lambda z}U_\ast(z)^2\,dz.
\]
Here we used
\[
\int_{\RR}e^{\lambda z}U_\ast(z-1)\,dz
=
e^\lambda\widehat U(\lambda),
\qquad
\int_{\RR}e^{\lambda z}U_\ast(z+1)\,dz
=
e^{-\lambda}\widehat U(\lambda).
\]

We claim that
\[
\Lambda=\lambda_\ast.
\]
If \(\Lambda>\lambda_\ast\), then
\eqref{eq:front-Laplace-identity-new} at
\(\lambda=\lambda_\ast\) gives
\[
0
=
\beta\widehat{U^2}(\lambda_\ast),
\]
which is impossible.

Conversely, suppose that \(\Lambda<\lambda_\ast\). Choose
\[
\lambda_0\in(\Lambda/2,\Lambda).
\]
Since \(U_\ast\) is decreasing and
\(\widehat U(\lambda_0)<\infty\),
\[
U_\ast(z)\leq C_{\lambda_0}e^{-\lambda_0z},
\qquad z\geq1.
\]
Thus, \(\widehat{U^2}\) is analytic in a neighborhood of
\(\lambda=\Lambda\). Since
\[
\mathcal D(\Lambda)>0,
\]
identity \eqref{eq:front-Laplace-identity-new} analytically continues
\(\widehat U\) through its abscissa of convergence, contradicting the
standard Laplace-transform singularity theorem for nonnegative functions.
Hence
\[
\Lambda=\lambda_\ast.
\]

Choose now
\[
\lambda_0\in(\lambda_\ast/2,\lambda_\ast).
\]
The preceding argument shows that \(\widehat{U^2}\) is analytic in a
neighborhood of \(\lambda_\ast\). Consequently,
\[
\widehat{U^2}(\lambda)
=
\widehat{U^2}(\lambda_\ast)
+
O(\lambda-\lambda_\ast).
\]
Combining this with
\eqref{eq:front-Laplace-identity-new} and
\eqref{eq:D-critical-expansion-new} yields
\begin{equation}\label{eq:front-double-pole-new}
\widehat U(\lambda)
=
\frac{
K_\ast+O(\lambda-\lambda_\ast)
}{
(\lambda_\ast-\lambda)^2
},
\end{equation}
where
\[
K_\ast
:=
\frac{2\beta}{s_\ast}
\widehat{U^2}(\lambda_\ast)
>0.
\]
It remains to check that \(\lambda_\ast\) is the only zero of
\(\mathcal D\) on the line
\(\Re\lambda=\lambda_\ast\). If
\[
\lambda=\lambda_\ast+i\tau,
\]
then
\[
\Re\mathcal D(\lambda)
=
s_\ast(\cos\tau-1).
\]
Thus a zero must satisfy \(\tau=2\pi k\), \(k\in\ZZ\). At such a point,
\[
\Im\mathcal D(\lambda)
=
c_\ast(\sin\tau-\tau)
=
-2\pi k c_\ast,
\]
which vanishes only for \(k=0\).

Therefore the first singularity of the Laplace transform is a double
pole at \(\lambda_\ast\), with no other singularity on the corresponding
boundary line.

For completeness, we verify the analytic input in the Tauberian step.
Choose \(\lambda_0\in(\lambda_\ast/2,\lambda_\ast)\). The monotonicity
argument above yields \(U_\ast(z)\leq C e^{-\lambda_0z}\) for \(z\geq1\);
hence \(\widehat{U^2}\) is analytic and bounded on every closed vertical
substripe of \(\Re\lambda<2\lambda_0\). The identity
\eqref{eq:front-Laplace-identity-new} consequently continues
\(\widehat U\) meromorphically across \(\Re\lambda=\lambda_\ast\).
The computation of \(\mathcal D(\lambda_\ast+i\tau)\) above, together
with the fact that \(|\mathcal D(\sigma+i\tau)|\to\infty\) as
\(|\tau|\to\infty\), provides a strip about that line in which the sole
singularity is the double pole at \(\lambda_\ast\). Laplace inversion on
a vertical line to the left of the pole and a contour shift across this
pole give its residue \(K_\ast z e^{-\lambda_\ast z}\); the bounded
vertical-strip transform of \(U_\ast^2\) makes the remaining integral
\(o(ze^{-\lambda_\ast z})\).
The Tauberian theorem used in
\cite{Carr-Chmaj-2004-PAMS} gives
\[
U_\ast(z)
=
K_\ast z e^{-\lambda_\ast z}(1+o(1)),
\qquad
z\to+\infty.
\]
Thus \eqref{eq:critical-front-tail-C-new} holds with
\(C_\ast=K_\ast\).

Finally, replacing \(U_\ast(z)\) by \(U_\ast(z+s)\) changes the leading
coefficient to
\[
C_\ast e^{-\lambda_\ast s}.
\]
Hence there is a unique translation for which this coefficient equals
one. With this normalization,
\begin{equation}\label{eq:critical-front-tail-new}
U_\ast(z)
=
z e^{-\lambda_\ast z}(1+o(1)),
\qquad z\to+\infty.
\end{equation}

It remains to prove the derivative estimate used later. The preceding
asymptotic gives, for \(j\in\{-1,0,1\}\) and all sufficiently large \(z\),
\[
0<U_\ast(z+j)\leq C(1+z)e^{-\lambda_\ast z}.
\]
Solving \eqref{eq:critical-wave-new} for \(U_\ast'(z)\), and using
\(0<U_\ast<1\), therefore yields
\[
c_\ast|U_\ast'(z)|\leq C(1+z)e^{-\lambda_\ast z}.
\]
The same derivative is bounded on every compact interval by
\eqref{eq:critical-wave-new}; increasing \(C\) proves
\(|U_\ast'(z)|\leq C(1+z)e^{-\lambda_\ast z}\) for all \(z\geq0\).

For the moving-strip argument, we use the globally Lipschitz extension
\[
\widetilde f(s)=
\begin{cases}
\beta s, & s\leq0,\\[1mm]
\beta s(1-s), & 0\leq s\leq1,\\[1mm]
0, & s\geq1.
\end{cases}
\]
It agrees with the logistic nonlinearity on $[0,1]$ and permits the use of
barriers that may take negative values.
\end{proof}

\subsection{Identification of the logarithmic shift}

For $t>T$ with $T>1$, let us define
\begin{equation}\label{eq:m-new}
m(t)
:=
c_\ast t
-
\frac{3}{2\lambda_\ast}\log t,
\end{equation}
and
\begin{equation}\label{eq:z-new}
z_n(t)
:=
n-m(t)
=
n-c_\ast t
+
\frac{3}{2\lambda_\ast}\log t.
\end{equation}

The intermediate leading-edge estimate provides the following formal matching argument for the logarithmic correction. Write
\[
y:=n-c_\ast t.
\]
If the nonlinear front is located a distance \(r(t)\) behind
\(c_\ast t\), then its traveling-wave coordinate is \(y+r(t)\). Since
\[
U_\ast(z)
=
z e^{-\lambda_\ast z}(1+o(1)),
\]
for
\[
y\asymp t^\eta,
\qquad
r(t)=O(\log t),
\]
we have
\[
U_\ast(y+r(t))
=
e^{-\lambda_\ast r(t)}
y e^{-\lambda_\ast y}(1+o(1)).
\]
On the other hand, Proposition~\ref{prop:leading-edge-new} gives
\[
u_n(t)
\asymp
t^{-3/2}y e^{-\lambda_\ast y}.
\]
Therefore matching requires
\[
e^{-\lambda_\ast r(t)}
\asymp
t^{-3/2},
\]
and hence
\[
r(t)
=
\frac{3}{2\lambda_\ast}\log t+O(1).
\]

We now absorb the factor \(t^{-3/2}\) by defining
\begin{equation}\label{eq:V-new}
V_n(t):=t^{3/2}v_n(t).
\end{equation}
Since
\[
e^{-\lambda_\ast(n-c_\ast t)}
=
t^{3/2}e^{-\lambda_\ast z_n(t)},
\]
we have the exact identity
\begin{equation}\label{eq:uV-new}
u_n(t)
=
e^{-\lambda_\ast z_n(t)}V_n(t).
\end{equation}
Moreover,
\begin{equation}\label{eq:V-eqn-new}
\partial_tV_n
=
\mathscr L_\ast V_n
+
\frac{3}{2t}V_n
-
\beta e^{-\lambda_\ast z_n(t)}V_n^2.
\end{equation}

For \(B\in\RR\), define
\begin{equation}\label{eq:Psi-new}
\Psi_n^B(t)
:=
e^{\lambda_\ast z_n(t)}
U_\ast(z_n(t)+B).
\end{equation}
Then
\[
e^{-\lambda_\ast z_n(t)}
\Psi_n^B(t)
=
U_\ast(z_n(t)+B).
\]
A direct computation using \eqref{eq:critical-wave-new} gives
\begin{equation}\label{eq:Psi-residual-new}
\begin{aligned}
&
\partial_t\Psi_n^B
-
\mathscr L_\ast\Psi_n^B
-
\frac{3}{2t}\Psi_n^B
+
\beta e^{-\lambda_\ast z_n(t)}
\bigl(\Psi_n^B\bigr)^2
\\
&\qquad=
\frac{3}{2\lambda_\ast t}
e^{\lambda_\ast z_n(t)}
U_\ast'\bigl(z_n(t)+B\bigr).
\end{aligned}
\end{equation}

\begin{lemma}[Comparison in a moving strip]
\label{lem:moving-strip-comparison-new}
Let $t_0<T_1$ and let $\ell,r:[t_0,T_1]\to\RR$ be piecewise
$C^1$ functions satisfying $\ell(t)+2\leq r(t)$. Suppose that bounded
functions $z$ and $Z$ satisfy the comparison inequality associated with
$\partial_t-\mathscr L_\ast+\mathcal Q_n(t;\cdot)$ in the interior of
\[
\{(t,n):t_0<t\leq T_1,\ \ell(t)\leq n\leq r(t)\}.
\]
If $z\leq Z$ on the initial section and on the nearest-neighbor boundary
layers at both sides of the strip, then $z\leq Z$ throughout the strip.
\end{lemma}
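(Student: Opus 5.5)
The plan is a discrete maximum-principle argument for the difference $\omega_n(t):=z_n(t)-Z_n(t)$ on the moving strip. I read the hypotheses in the standard way: $Z$ is a supersolution and $z$ a subsolution, i.e.
\[
\partial_t Z_n-\mathscr L_\ast Z_n+\mathcal Q_n(t;Z_n)\geq0\geq
\partial_t z_n-\mathscr L_\ast z_n+\mathcal Q_n(t;z_n)
\]
at interior lattice points. Here $\mathcal Q_n(t;\cdot)$ is locally Lipschitz in its last argument, uniformly on bounded sets. This covers the linear term $-\tfrac{3}{2t}V$ and the quadratic term $\beta e^{-\lambda_\ast z_n(t)}V^2$, or the globally Lipschitz extension $\widetilde f$. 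Since $z,Z$ are bounded, there is a constant $K$ with $|\mathcal Q_n(t;a)-\mathcal Q_n(t;b)|\leq K|a-b|$ for all relevant values. Subtracting the two inequalities gives
\[
\partial_t\omega_n\leq a_\ast(\omega_{n-1}-\omega_n)+b_\ast(\omega_{n+1}-\omega_n)+K|\omega_n|
\]
at interior points. I then set $\widetilde\omega_n(t):=e^{-(K+1)(t-t_0)}\omega_n(t)$, which satisfies the same inequality with the zeroth-order term replaced by $K|\widetilde\omega_n|-(K+1)\widetilde\omega_n$. At any point where $\widetilde\omega_n>0$, this term is strictly negative.

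Next I argue by contradiction. Suppose $\sup\widetilde\omega>0$ on the strip. The set of lattice points in the strip at each time is finite and contains at least two sites, because $\ell(t)+2\leq r(t)$. So the quantity $M(t):=\max_{\ell(t)\leq n\leq r(t)}\widetilde\omega_n(t)$ is well defined. Let $t_1:=\inf\{t:M(t)>0\}$ and take the first time, together with a site $n_1$, at which $\widetilde\omega$ reaches a positive level $\varepsilon$ with $\varepsilon$ small. Since $\widetilde\omega\le0$ on the initial section and on the boundary layers, $n_1$ must be an interior point. At $n_1$ the site is a spatial maximum over the strip together with its nearest-neighbour layers, where $\widetilde\omega\leq0<\varepsilon$. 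Hence $\widetilde\omega_{n_1\pm1}\leq\widetilde\omega_{n_1}$, so $\mathscr L_\ast\widetilde\omega_{n_1}\leq0$, because $a_\ast,b_\ast>0$. The differential inequality then forces $\partial_t\widetilde\omega_{n_1}<0$. This contradicts $\widetilde\omega_{n_1}$ having just crossed upward to level $\varepsilon$, since by first-crossing the left derivative must be $\geq0$.

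The main obstacle is the moving boundary: the index set $\{\ell(t)\leq n\leq r(t)\}$ changes with $t$, so $t\mapsto\widetilde\omega_{n_1}(t)$ may be defined as an interior value only on a one-sided interval. I would handle this as follows.
\begin{itemize}
\item Use the piecewise $C^1$ assumption to split $[t_0,T_1]$ into finitely many subintervals on which the set of interior integer sites changes only at isolated times.
\item A site that enters the interior at time $\tau$ was, just before $\tau$, in the nearest-neighbour boundary layer, where $\widetilde\omega\le0$. By continuity of $z,Z$ in $t$, its value at entry is $\leq0$, so it cannot be the first site to reach level $\varepsilon$ at that instant.
\item Thus the first crossing always happens at a site that has been interior on a left neighbourhood of the crossing time, and the one-sided derivative argument applies there.
\end{itemize}
Letting $\varepsilon\downarrow0$ gives $\widetilde\omega\leq0$, hence $z\leq Z$ throughout the strip.
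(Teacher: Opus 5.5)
Your proposal is correct and follows essentially the paper's route. You apply an exponential time weight to $z-Z$ and then the discrete maximum principle, using $a_\ast,b_\ast>0$ and the Lipschitz bound on $\mathcal Q_n(t;\cdot)$; the initial section and the two width-one layers exclude a positive maximum on the parabolic boundary. On the finite strip no spatial penalization as in Lemma~\ref{lem:moving-half-line-comparison-new} is needed, and your handling of sites entering the moving strip is more careful than the paper's one-line sketch.

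One small correction: a $C^1$ boundary can cross an integer infinitely often, so the switching times need not be isolated. Your continuity argument does not need this. If the crossing site is outside the strip at times accumulating at the crossing time, it lies in a boundary layer at those times. It therefore has value $\leq 0$ at the crossing time, which is a contradiction.
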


\begin{proof}
The proof is the discrete maximum-principle argument used in
Lemma~\ref{lem:moving-half-line-comparison-new}. The difference, after an
exponential time weight, cannot possess a positive interior maximum because
$a_\ast,b_\ast>0$ and $\mathcal Q_n(t;\cdot)$ is locally Lipschitz. The
initial and the two width-one boundary layers exclude a positive maximum on
the parabolic boundary.
\end{proof}

\begin{proposition}[Matching in a moving strip]
\label{prop:matching-new}
There exist \(B_-<B_+\), an exponent
\(\eta_0\in(0,1/4)\), and a function
\(\varepsilon(t)\to0\) such that
\begin{equation}\label{eq:matching-V-new}
\Psi_n^{B_+}(t)-\varepsilon(t)
\leq
V_n(t)
\leq
\Psi_n^{B_-}(t)+\varepsilon(t),
\end{equation}
uniformly for
\begin{equation}\label{eq:matching-strip-new}
|z_n(t)|\leq t^{\eta_0}.
\end{equation}
\end{proposition}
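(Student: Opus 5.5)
We prove both inequalities in \eqref{eq:matching-V-new} by comparison for the equation \eqref{eq:V-eqn-new} in a moving strip, using Lemma~\ref{lem:moving-strip-comparison-new}. The barriers are the profiles $\Psi^{B}$ from \eqref{eq:Psi-new}, corrected so that the residual \eqref{eq:Psi-residual-new} has a favorable sign. Boundary data come from two sources: Proposition~\ref{prop:leading-edge-new} on the right edge, and the bound $0\le u\le1$ together with the front structure on the left edge.

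\textbf{Step 1: control of the residual.} By Lemma~\ref{lem:critical-front-new}, $|U_\ast'(z)|\le C(1+z)e^{-\lambda_\ast z}$ for $z\ge0$. Hence the right-hand side of \eqref{eq:Psi-residual-new} is $O\big((1+|z_n|)/t\big)$ on the part of the strip where $z_n+B\ge0$. On the part where $z_n+B<0$ we have $|z_n|\le t^{\eta_0}$, so the factor $e^{\lambda_\ast z_n}$ is bounded, and the residual is again $O(1/t)$. Since $\eta_0<1/4$, in both cases the residual is $O(t^{\eta_0-1})$, which is integrable only in the weak sense that it is $o(1)$ after integration over time windows of length $o(t^{1-\eta_0})$.

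\textbf{Step 2: barriers.} I set
\[
\overline V_n(t)=\Psi^{B_-}_n(t)+\varepsilon(t),\qquad \underline V_n(t)=\Psi^{B_+}_n(t)-\varepsilon(t).
\]
I choose $\varepsilon(t)=t^{-\kappa}$ with small $\kappa>0$, or else absorb the residual by adding a term $Ct^{\eta_0-1}\cdot t$ that decays in relative size. The first choice is preferable: inserting $\overline V$ into \eqref{eq:V-eqn-new}, the term $\tfrac{3}{2t}\varepsilon$ and $\varepsilon'$ have size $t^{-1-\kappa}$, which is smaller than the residual $t^{\eta_0-1}$. So the additive constant alone cannot dominate the residual. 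I would therefore instead replace $B$ by a slowly varying shift $B(t)=B_\pm\mp t^{-\kappa'}$. Its time derivative produces the term
\[
\dot B(t)\,e^{\lambda_\ast z}U_\ast'(z+B),
\]
which has exactly the same shape as the residual, with a sign we choose. However, it is only of size $t^{-1-\kappa'}$, which is too small, so this shift does not help either.

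The fallback is to work on time windows $[t_0,2t_0]$ with strip width $t^{\eta_0}$ and accept an error $\int_{t_0}^{2t_0} t^{\eta_0-1}\,dt\sim t_0^{\eta_0}$. That is too large as well. I therefore conclude that the residual must be handled through its sign. Since $U_\ast'<0$, the residual $\frac{3}{2\lambda_\ast t}e^{\lambda_\ast z}U_\ast'$ is negative, so $\Psi^B$ is automatically a subsolution. This gives the lower bound essentially for free, with $-\varepsilon(t)$ handling only the boundary mismatch. For the upper bound I correct $\Psi^{B}$ by a small multiple of the dipole $w$ of Lemma~\ref{lem:critical-dipole-new}, or by $t^{-1}\times$ (a bounded corrector solving the linearized wave operator), chosen to cancel the negative residual.

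\textbf{Step 3: boundary and initial layers.} On the right edge $z_n=t^{\eta_0}$, Proposition~\ref{prop:leading-edge-new} with $\eta$ chosen slightly larger than $\eta_0$ gives $V_n\asymp z_n$. The barrier satisfies $\Psi^B_n\approx (z_n+B)e^{-\lambda_\ast B}$. Choosing $B_-$ very negative and $B_+$ very positive, and using that the constants $c_0,C_0$ are fixed, gives the required ordering on the right edge. On the left edge $z_n=-t^{\eta_0}$, the factor $e^{-\lambda_\ast z_n}$ is huge, so the identity $u=e^{-\lambda_\ast z}V$ together with $u\le1$ gives $V\le e^{-\lambda_\ast t^{\eta_0}}$. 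The same bound holds for the barriers because $U_\ast\le1$. The lower bound on the left edge requires $u$ to be bounded away from zero there, that is $U_\ast(z+B_+)\le u$. I would obtain this by a spreading argument giving $\liminf u\ge 1-o(1)$ behind $c_\ast t-C\log t$, which follows from comparison with compactly supported subsolutions and the leading-edge lower bound. The initial section at a large time $t_0$ is handled by the same bounds, after enlarging $|B_\pm|$.

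The main obstacle is the upper barrier. The residual is only $O(t^{\eta_0-1})$, while comparison requires a correction that is $o(1)$ uniformly, and the Bramson $3/(2t)$ term has an unfavorable sign. I would first try a corrector of the form $\Psi^{B_-}+t^{-1/2+\sigma}\,\chi_n(t)$, with $\chi$ a positive supersolution of $\partial_t-\mathscr L_\ast-\tfrac{3}{2t}$ built from the dipole. I expect that bookkeeping to be the delicate step.
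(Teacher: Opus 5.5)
Your proposal has a genuine gap: it never constructs a supersolution, so the upper inequality $V_n\le\Psi^{B_-}_n+\varepsilon(t)$ in \eqref{eq:matching-V-new} is not proved. That inequality is the substance of the proposition, not a bookkeeping detail.

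Your diagnosis is correct. $\Psi^B$ is a subsolution, and its residual $\frac{3}{2\lambda_\ast t}e^{\lambda_\ast z_n}U_\ast'(z_n+B)$ is negative and can be as large as $C_Rt^{\eta_0-1}$ in absolute value. Neither a spatially constant $\varepsilon(t)$ nor a moving shift $B(t)$ produces a positive term of that size. However, the remedies you propose do not close the gap:
\begin{itemize}
\item The dipole gives no margin, because $t^{3/2}w$ solves the linear part of \eqref{eq:V-eqn-new} exactly.
\item The prefactor in $t^{-1/2+\sigma}\chi$ adds the negative term $(\sigma-\frac12)t^{-3/2+\sigma}\chi$.
\item $w_n(t)$ is negative once $y_n(t)$ is sufficiently negative. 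Since $y_n=z_n-\frac{3}{2\lambda_\ast}\log t$, this happens on roughly the left half of the strip.
\item The alternative ``bounded corrector of the linearized wave operator'' is left unspecified.
\end{itemize}
Even the lower bound is not quite free with a spatially constant $\varepsilon(t)$. The term $-\dot\varepsilon+\frac{3}{2t}\varepsilon>0$ enters the subsolution residual with the wrong sign.

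The missing idea is to make the additive correction spatially curved, so that its favorable residual comes from the discrete diffusion rather than from time derivatives. The paper takes
\[
\overline s_n(t)=C_2t^{-\ell}\cos\Bigl(\frac{n-c_\ast t}{t^{3/8}}\Bigr),
\qquad 0<\ell<\tfrac14-\eta_0 .
\]
The strip has width $O(t^{\eta_0})$ with $\eta_0<3/8$, so the argument of the cosine tends to $0$ uniformly. Hence $\overline s_n\asymp C_2t^{-\ell}\to0$ and stays positive. The transport terms cancel in the moving frame, and the curvature gives $(\partial_t-\mathscr L_\ast-\frac{3}{2t})\overline s_n=\frac{s_\ast}{2}C_2t^{-\ell-3/4}(1+o(1))$. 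This dominates $C_Rt^{\eta_0-1}$ exactly because $\ell+\frac34<1-\eta_0$, which is where $\eta_0<1/4$ is used. Using the monotonicity of $\mathcal Q_n(t;\cdot)$ on $[0,\infty)$, $\Psi^{B_-}+\overline s$ is a supersolution, $\Psi^{B_+}-\overline s$ is a subsolution wherever it is positive, and $\varepsilon(t)=C_2t^{-\ell}$.

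This construction also removes your left-edge difficulty. There $V_n$ and $\Psi^B_n$ are $O(e^{-\lambda_\ast t^{\eta_0}})$, so
\[
\Psi^{B_+}_n-\overline s_n<0\le V_n\le\overline s_n\le\Psi^{B_-}_n+\overline s_n
\]
holds trivially. No spreading estimate is needed, and in particular not $u_n\ge U_\ast(z_n+B_+)$, which would require exponential accuracy behind the front.

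Finally, on the right edge $n-c_\ast t=t^{\eta_0}+O(\log t)$. So Proposition~\ref{prop:leading-edge-new} must be applied with $\eta=\eta_0$ itself, not a slightly larger exponent.
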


\begin{proof}
Let $\eta_0\in(0,1/4)$ be the exponent given by
Proposition~\ref{prop:leading-edge-new}. Fix
\[
0<\ell<\frac14-\eta_0.
\]
For $T>1$, to be chosen sufficiently large, define
\[
\Omega_T
:=
\left\{
(t,n):
t\geq T,\quad
|z_n(t)|\leq t^{\eta_0}
\right\}.
\]

On the right lateral layer of $\Omega_T$, we have
\[
z_n(t)
=
t^{\eta_0}+O(1).
\]
Since
\[
n-c_\ast t
=
z_n(t)
-
\frac{3}{2\lambda_\ast}\log t,
\]
it follows that
\[
n-c_\ast t
=
t^{\eta_0}+O(\log t).
\]
Hence, for all sufficiently large $t$,
\[
\frac12t^{\eta_0}
\leq
n-c_\ast t
\leq
2t^{\eta_0}.
\]
By Proposition~\ref{prop:leading-edge-new}, there exist constants
$0<c_1<C_1$ such that
\[
c_1t^{\eta_0}
\leq
V_n(t)
\leq
C_1t^{\eta_0}.
\]

Moreover, the normalized critical-front tail yields
\[
\begin{aligned}
\Psi_n^B(t)
&=
e^{\lambda_\ast z_n(t)}
U_\ast(z_n(t)+B)
\\
&=
e^{-\lambda_\ast B}
(z_n(t)+B)(1+o(1))
\\
&=
e^{-\lambda_\ast B}t^{\eta_0}(1+o(1)).
\end{aligned}
\]
Thus, choosing $B_+$ sufficiently large and $B_-$ sufficiently
negative, with
\[
B_-<B_+,
\]
we obtain
\[
\Psi_n^{B_+}(t)
\leq
V_n(t)
\leq
\Psi_n^{B_-}(t)
\]
on the right lateral layer for all sufficiently large $t$.

On the left lateral layer,
\[
z_n(t)
=
-t^{\eta_0}+O(1).
\]
Since
\[
V_n(t)
=
e^{\lambda_\ast z_n(t)}u_n(t),
\qquad
\Psi_n^B(t)
=
e^{\lambda_\ast z_n(t)}
U_\ast(z_n(t)+B),
\]
and
\[
0\leq u_n(t),U_\ast\leq1,
\]
we have
\[
V_n(t)+\Psi_n^B(t)
\leq
Ce^{-\lambda_\ast t^{\eta_0}}
\]
on the left lateral layer. Consequently, for every $C_2>0$ and all
sufficiently large $t$,
\[
\Psi_n^{B_+}(t)-C_2t^{-\ell}
\leq
0
\leq
V_n(t),
\]
and
\[
V_n(t)
\leq
\Psi_n^{B_-}(t)+C_2t^{-\ell}.
\]

Fix a sufficiently large time $T$. Since
\[
\left\{
n\in\ZZ:
|z_n(T)|\leq T^{\eta_0}
\right\}
\]
is finite, by increasing $B_+$, decreasing $B_-$, and then choosing
$C_2>0$ sufficiently large, we may assume that
\[
\Psi_n^{B_+}(T)-C_2T^{-\ell}
\leq
V_n(T)
\leq
\Psi_n^{B_-}(T)+C_2T^{-\ell}
\]
on the initial section of $\Omega_T$.

Set
\[
\theta_n(t)
:=
\frac{n-c_\ast t}{t^{3/8}},
\]
and define
\[
\overline s_n(t)
:=
C_2t^{-\ell}\cos\theta_n(t).
\]
Since
\[
n-c_\ast t
=
z_n(t)
-
\frac{3}{2\lambda_\ast}\log t,
\]
we have, uniformly in $\Omega_T$,
\[
|\theta_n(t)|
\leq
t^{\eta_0-3/8}
+
Ct^{-3/8}\log t
\longrightarrow0.
\]
Thus, after increasing $T$ if necessary,
\[
\overline s_n(t)>0,
\]
throughout $\Omega_T$.

A direct computation gives
\[
\begin{aligned}
(\partial_t-\mathscr L_\ast)\overline s_n
={}&
C_2t^{-\ell}
\Bigg[
\left(
s_\ast\bigl(1-\cos(t^{-3/8})\bigr)
-\frac{\ell}{t}
\right)
\cos\theta_n
\\
&\qquad+
\left(
c_\ast\bigl(t^{-3/8}-\sin(t^{-3/8})\bigr)
+
\frac{3\theta_n}{8t}
\right)
\sin\theta_n
\Bigg].
\end{aligned}
\]
It follows that
\[
\left(
\partial_t-\mathscr L_\ast-\frac{3}{2t}
\right)
\overline s_n(t)
=
\frac{s_\ast C_2}{2}
t^{-\ell-3/4}
\bigl(1+o(1)\bigr),
\]
uniformly in $\Omega_T$. In particular, there exists $c_s>0$ such
that
\[
\left(
\partial_t-\mathscr L_\ast-\frac{3}{2t}
\right)
\overline s_n(t)
\geq
c_sC_2t^{-\ell-3/4},
\]
throughout $\Omega_T$ for all sufficiently large $t$.

Define
\[
\mathcal Q_n(t;W)
:=
\beta W
-
e^{\lambda_\ast z_n(t)}
\widetilde f\left(
e^{-\lambda_\ast z_n(t)}W
\right).
\]
Then
\[
\partial_tV_n
-
\mathscr L_\ast V_n
-
\frac{3}{2t}V_n
+
\mathcal Q_n(t;V_n)
=
0.
\]
Moreover,
\[
\mathcal Q_n(t;W)
=
\begin{cases}
0,
& W\leq0,
\\[1mm]
\beta e^{-\lambda_\ast z_n(t)}W^2,
& 0\leq W\leq e^{\lambda_\ast z_n(t)},
\\[1mm]
\beta W,
& W\geq e^{\lambda_\ast z_n(t)}.
\end{cases}
\]
Hence,
\[
\mathcal Q_n(t;\cdot),
\]
is nondecreasing on $[0,\infty)$.

For $\Psi^B$, we have
\[
e^{-\lambda_\ast z_n(t)}\Psi_n^B(t)
=
U_\ast(z_n(t)+B).
\]
Since
\[
0<U_\ast<1,
\]
and
\[
\widetilde f=f
\qquad\text{on }[0,1],
\]
it follows that
\[
\mathcal Q_n(t;\Psi_n^B)
=
\beta e^{-\lambda_\ast z_n(t)}
(\Psi_n^B)^2.
\]
Define
\[
\mathscr F_n(t;W)
:=
\partial_tW_n
-
\mathscr L_\ast W_n
-
\frac{3}{2t}W_n
+
\mathcal Q_n(t;W_n).
\]
By \eqref{eq:Psi-residual-new},
\[
\mathscr F_n(t;\Psi^B)
=
\frac{3}{2\lambda_\ast t}
e^{\lambda_\ast z_n(t)}
U_\ast'(z_n(t)+B).
\]
Since $U_\ast$ is decreasing,
\[
U_\ast'(z)\leq0,
\qquad z\in\RR,
\]
and hence
\[
\mathscr F_n(t;\Psi^B)\leq0.
\]

By the derivative estimate in
Lemma~\ref{lem:critical-front-new},
\[
|U_\ast'(z)|
\leq
C(1+z)e^{-\lambda_\ast z},
\qquad z\geq0.
\]
Together with the boundedness of $U_\ast'$ on bounded intervals, this
gives
\[
\left|
\mathscr F_n(t;\Psi^B)
\right|
\leq
C_Rt^{-1+\eta_0},
\]
uniformly in $\Omega_T$, for some $C_R>0$.

Since
\[
\ell+\frac34<1-\eta_0,
\]
we may increase $T$ further so that
\[
c_sC_2t^{-\ell-3/4}
\geq
C_Rt^{-1+\eta_0}
\]
throughout $\Omega_T$.

Set
\[
\overline V_n(t)
:=
\Psi_n^{B_-}(t)+\overline s_n(t).
\]
The monotonicity of $\mathcal Q_n(t;\cdot)$ gives
\[
\mathcal Q_n(t;\overline V_n)
-
\mathcal Q_n(t;\Psi_n^{B_-})
\geq0.
\]
Therefore,
\[
\begin{aligned}
\mathscr F_n(t;\overline V)
={}&
\mathscr F_n(t;\Psi^{B_-})
+
\left(
\partial_t-\mathscr L_\ast-\frac{3}{2t}
\right)
\overline s_n
\\
&+
\mathcal Q_n(t;\overline V_n)
-
\mathcal Q_n(t;\Psi_n^{B_-})
\\
\geq{}&
-C_Rt^{-1+\eta_0}
+
c_sC_2t^{-\ell-3/4}
\geq0.
\end{aligned}
\]
Thus, $\overline V$ is a supersolution in $\Omega_T$.

Similarly, define
\[
\underline V_n(t)
:=
\Psi_n^{B_+}(t)-\overline s_n(t).
\]
At points where
\[
\underline V_n(t)>0,
\]
we have
\[
0<\underline V_n(t)\leq\Psi_n^{B_+}(t).
\]
Hence,
\[
\mathcal Q_n(t;\underline V_n)
-
\mathcal Q_n(t;\Psi_n^{B_+})
\leq0.
\]
Consequently,
\[
\begin{aligned}
\mathscr F_n(t;\underline V)
={}&
\mathscr F_n(t;\Psi^{B_+})
-
\left(
\partial_t-\mathscr L_\ast-\frac{3}{2t}
\right)
\overline s_n
\\
&+
\mathcal Q_n(t;\underline V_n)
-
\mathcal Q_n(t;\Psi_n^{B_+})
\\
\leq{}&
0.
\end{aligned}
\]
Thus, $\underline V$ is a subsolution wherever it is positive. Where
\[
\underline V_n(t)\leq0,
\]
the lower comparison is automatic because
\[
V_n(t)\geq0.
\]

The initial comparison and the comparisons on both width-one
lateral layers have been established above. Hence,
Lemma~\ref{lem:moving-strip-comparison-new} yields

\[
\underline V_n(t)
\leq
V_n(t)
\leq
\overline V_n(t)
\]
uniformly in $\Omega_T$. Since
\[
0<\overline s_n(t)\leq C_2t^{-\ell},
\]
we obtain
\[
\Psi_n^{B_+}(t)-C_2t^{-\ell}
\leq
V_n(t)
\leq
\Psi_n^{B_-}(t)+C_2t^{-\ell}
\]
whenever
\[
|z_n(t)|\leq t^{\eta_0}
\]
and $t$ is sufficiently large.

The conclusion follows with
\[
\varepsilon(t):=C_2t^{-\ell}.
\]
\end{proof}

Multiplying \eqref{eq:matching-V-new} by
\(e^{-\lambda_\ast z_n(t)}\) and using
\eqref{eq:uV-new} gives the corresponding comparison for \(u\).

\subsection{Completion of the proof}

We first control the region ahead of the matching strip.

\begin{corollary}[Decay beyond the matching strip]
\label{cor:far-ahead-new}
With \(m(t)\) defined in \eqref{eq:m-new},
\[
\sup_{n\geq m(t)+t^{\eta_0}}u_n(t)
\to0,
\qquad
t\to\infty.
\]
\end{corollary}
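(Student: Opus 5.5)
The plan is to combine the matching strip estimate at its right edge with a global linear supersolution ahead of the strip. First, at the right boundary $z_n(t)\approx t^{\eta_0}$, Proposition~\ref{prop:matching-new} together with the normalized tail $U_\ast(z)=ze^{-\lambda_\ast z}(1+o(1))$ gives
\[
V_n(t)\leq C\,t^{\eta_0},
\qquad\text{hence}\qquad
u_n(t)=e^{-\lambda_\ast z_n(t)}V_n(t)\leq Ct^{\eta_0}e^{-\lambda_\ast t^{\eta_0}}\to0 .
\]
The same bound, with $V_n\asymp z_n$, is available from Proposition~\ref{prop:leading-edge-new} in the range $\frac12 t^{\eta}\le n-c_\ast t\le 2t^\eta$. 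It remains to propagate smallness to all $n\ge m(t)+t^{\eta_0}$.

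For this region I will use the linear upper bound. By comparison, $u\le v$, where $v$ solves the linearized problem with datum $u_0$. Corollary~\ref{cor:general-linear-kernel} then gives
\[
u_n(t)\le C\sum_{y\in\operatorname{supp}u_0}\frac{d+1}{(1+d^2+t^2)^{3/4}}\,e^{t\Phi(d/t)},
\qquad d=d(x,y)\ge n-R .
\]
Here $R$ bounds the support radius. Rather than estimating $\Phi$ directly, I will use the exponential supersolution $\bar u_n(t)=Ae^{-\lambda_\ast(n-c_\ast t)}$. It is an exact solution of the linearized bulk equation by the critical relation $\lambda_\ast c_\ast=D(\lambda_\ast)$. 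This yields $v_n(t)\le Ce^{-\lambda_\ast(n-c_\ast t)}$ for $n\ge1$; the root equation needs $\bar u_0$ to dominate, which holds since $e^{\lambda_\ast}\ge q^{1/2}$ makes $\bar u$ a supersolution there too. The bound alone is too weak near $n-c_\ast t\approx t^{\eta_0}$, since it only gives $e^{-\lambda_\ast t^{\eta_0}}t^{3/2}$ after the log shift. That is still $o(1)$, because $t^{3/2}e^{-\lambda_\ast t^{\eta_0}}\to0$.

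More carefully, on $n\ge m(t)+t^{\eta_0}$ we have $n-c_\ast t\ge t^{\eta_0}-\frac{3}{2\lambda_\ast}\log t$. Therefore
\[
u_n(t)\le Ae^{-\lambda_\ast(n-c_\ast t)}\le A\,t^{3/2}e^{-\lambda_\ast t^{\eta_0}},
\]
and the right-hand side tends to zero uniformly. The main point to verify is the global supersolution property of $Ae^{-\lambda_\ast(n-c_\ast t)}$, including at the root. At $n=0$ the requirement reads $\lambda_\ast c_\ast\ge \beta-\alpha(q+1)+\alpha(q+1)e^{-\lambda_\ast}$, which reduces to $e^{\lambda_\ast}\ge e^{-\lambda_\ast}$ and is true. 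The datum is covered by choosing $A\ge e^{\lambda_\ast R}$. The comparison principle from the appendix then concludes the argument.
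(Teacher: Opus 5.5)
Your proof is correct, but it reaches the key bound by a different and more elementary route than the paper.

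\textbf{The paper's route.} The paper invokes the upper leading-edge barrier $\overline v$ of Lemma~\ref{lem:upper-barrier-new} and uses only the fact that it is bounded. This gives $v_n(t)\le C$ on the moving half-line $y_n(t)\ge -t^\delta$ for large $t$. Then $u_n=e^{-\lambda_\ast y_n}v_n\le Ce^{-\lambda_\ast y_n}$, and $y_n\ge \frac12 t^{\eta_0}$ finishes the argument.

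\textbf{Your route.} You get exactly the same inequality, $u_n(t)\le Ae^{-\lambda_\ast(n-c_\ast t)}$ (i.e.\ $v_n\le A$), from the explicit exponential $Ae^{-\lambda_\ast(n-c_\ast t)}$. This function:
\begin{itemize}
\item solves the linearized bulk equation, because $\lambda_\ast c_\ast=D(\lambda_\ast)$;
\item is a supersolution at the root, since the defect there is $\alpha(e^{\lambda_\ast}-e^{-\lambda_\ast})\ge0$;
\item dominates $u_0$ once $A\ge e^{\lambda_\ast R}$;
\item is bounded in space at each fixed time, so an $\ell^\infty$ comparison applies.
\end{itemize}

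\textbf{What each buys.} Your bound holds for all $t\ge0$ and all $n\ge0$. It does not depend on the barrier construction, which the paper only sketches by reference to Besse--Faye--Roquejoffre--Zhang. The paper's route gains nothing extra for this corollary, because the finer dipole information carried by $\overline v$ is discarded here anyway. Your factor $t^{3/2}$ from the logarithmic shift is harmless, since $t^{3/2}e^{-\lambda_\ast t^{\eta_0}}\to0$.

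\textbf{Small cleanups.}
\begin{itemize}
\item Your first paragraph (the matching-strip bound near $z_n\approx t^{\eta_0}$) is never used and can be dropped. The same goes for the appeal to Corollary~\ref{cor:general-linear-kernel}.
\item The remark that ``$e^{\lambda_\ast}\ge q^{1/2}$'' justifies the root inequality is not the relevant reason. The correct check is the one you give at the end, $e^{\lambda_\ast}\ge e^{-\lambda_\ast}$.
\item The comparison lemma in Appendix~\ref{Appendix:Auxiliary:Results:2} is stated for two solutions of the extended equation. You need its (verbatim) sub/supersolution version. Alternatively, argue via $u\le v$ and positivity of the linear semigroup, as in Lemma~\ref{Lemma:radial-linear-bound}, applied to $Ae^{-\lambda_\ast(n-c_\ast t)}-v$.
\end{itemize}
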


\begin{proof}
By Lemma~\ref{lem:upper-barrier-new},
\[
v_n(t)
\leq
\overline v_n(t),
\]
for all sufficiently large $t$ in the moving half-line
\[
y_n(t)\geq-t^\delta.
\]
Since $\xi_+$ is bounded, $w$ is bounded in
$\ell^\infty(\ZZ)$, and $p^+$ and $\Xi$ (see its definition in Lemma \ref{lem:upper-barrier-new}) are bounded, there exists
$C>0$ such that
\[
\overline v_n(t)\leq C.
\]
Hence,
\[
v_n(t)\leq C
\]
for all sufficiently large $t$ such that
\[
y_n(t)\geq-t^\delta.
\]

If
\[
n\geq m(t)+t^{\eta_0},
\]
then
\[
\begin{aligned}
y_n(t)
&=
n-c_\ast t
\\
&\geq
t^{\eta_0}
-
\frac{3}{2\lambda_\ast}\log t.
\end{aligned}
\]
Since $\eta_0>0$, for all sufficiently large $t$,
\[
t^{\eta_0}
-
\frac{3}{2\lambda_\ast}\log t
\geq
\frac12t^{\eta_0}.
\]
Therefore,
\[
y_n(t)\geq\frac12t^{\eta_0},
\]
and, in particular,
\[
y_n(t)\geq-t^\delta.
\]
Thus the preceding upper bound for $v_n(t)$ applies. Using
\[
u_n(t)
=
e^{-\lambda_\ast y_n(t)}v_n(t),
\]
we obtain
\[
u_n(t)
\leq
Ce^{-\lambda_\ast y_n(t)}
\leq
Ce^{-\lambda_\ast t^{\eta_0}/2}.
\]
Taking the supremum over
\[
n\geq m(t)+t^{\eta_0},
\]
gives
\[
\sup_{n\geq m(t)+t^{\eta_0}}u_n(t)
\leq
Ce^{-\lambda_\ast t^{\eta_0}/2}
\longrightarrow0.
\]
\end{proof}

\begin{proposition}[Completion of the proof of Theorem~\ref{Thm:Bramson}]
Let $\theta\in(0,1)$ and let
\begin{equation*}
m(t)=c_*t-\frac{3}{2\lambda_*}\log t.
\end{equation*}
Then there exist constants $C_\theta>0$ and $T_\theta>0$ such that, for every $t\geq T_\theta$,
\begin{equation*}
m(t)-C_\theta
\leq \kappa_\theta(t)
\leq m(t)+C_\theta.
\end{equation*}
Consequently,
\begin{equation*}
\kappa_\theta(t)=c_*t-\frac{3}{2\lambda_*}\log t+O(1),
\qquad t\to\infty,
\end{equation*}
where the constant implicit in $O(1)$ may depend on $\theta$.
\end{proposition}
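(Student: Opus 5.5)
The plan is to read off both bounds on $\kappa_\theta(t)$ from the matching estimate of Proposition~\ref{prop:matching-new}, rewritten for $u$ via \eqref{eq:uV-new}. The region ahead of the matching strip is handled by Corollary~\ref{cor:far-ahead-new}. Multiplying \eqref{eq:matching-V-new} by $e^{-\lambda_\ast z_n(t)}$ gives, uniformly for $|z_n(t)|\leq t^{\eta_0}$ and $t$ large,
\[
U_\ast\bigl(z_n(t)+B_+\bigr)-e^{-\lambda_\ast z_n(t)}\varepsilon(t)
\leq u_n(t)\leq
U_\ast\bigl(z_n(t)+B_-\bigr)+e^{-\lambda_\ast z_n(t)}\varepsilon(t).
\]
Since $\kappa_\theta(t)$ is a supremum, the lower bound only needs one vertex at distance at least $m(t)-C_\theta$ with $u\geq\theta$. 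The upper bound needs $u<\theta$ at every distance beyond $m(t)+C_\theta$.

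\emph{Lower bound.} Fix $C_\theta>0$ so large that $U_\ast(-C_\theta+1+B_+)\geq(1+\theta)/2$. This is possible because $U_\ast$ is decreasing with $U_\ast(-\infty)=1$. Take the integer $n=\lceil m(t)-C_\theta\rceil$, so that $z_n(t)\in[-C_\theta,-C_\theta+1]$. This $n$ lies in the strip $|z_n|\leq t^{\eta_0}$ for large $t$, and $n\geq1$, so $n$ is realised by some vertex $x$ with $d(o,x)=n$. The lower matching bound then gives
\[
u_n(t)\geq\frac{1+\theta}{2}-e^{\lambda_\ast C_\theta}\varepsilon(t)\geq\theta
\]
as soon as $\varepsilon(t)\leq e^{-\lambda_\ast C_\theta}(1-\theta)/2$. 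This holds for $t\geq T_\theta$ because $\varepsilon(t)\to0$. Hence $\kappa_\theta(t)\geq m(t)-C_\theta$.

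\emph{Upper bound.} Split $n>m(t)+C_\theta$ into two ranges. In the range $C_\theta<z_n(t)\leq t^{\eta_0}$, the upper matching bound together with $e^{-\lambda_\ast z_n}\leq1$ gives $u_n(t)\leq U_\ast(C_\theta+B_-)+\varepsilon(t)$. Enlarging $C_\theta$ so that $U_\ast(C_\theta+B_-)\leq\theta/2$ (using $U_\ast(+\infty)=0$) and taking $t$ large makes this $<\theta$. In the range $n\geq m(t)+t^{\eta_0}$, Corollary~\ref{cor:far-ahead-new} gives $\sup u_n(t)\to0$, so it is $<\theta$ for large $t$. Hence no vertex with $d(o,x)>m(t)+C_\theta$ satisfies $u\geq\theta$, and $\kappa_\theta(t)\leq m(t)+C_\theta$.

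There is no real obstacle once Proposition~\ref{prop:matching-new} is available. The only points needing care are the following.
\begin{itemize}
\item The constants $C_\theta$ must be chosen before $T_\theta$, and the error term $e^{-\lambda_\ast z}\varepsilon(t)$ must stay small. It does so on the left only because $z$ is bounded below by $-C_\theta$ at the chosen point, and on the right because $z\geq C_\theta>0$.
\item The window $[m(t)-C_\theta,\,m(t)-C_\theta+1]$ must contain an integer, which the ceiling choice guarantees.
\item The two ranges in the upper bound must cover everything beyond $m(t)+C_\theta$, which holds since the strip reaches exactly to $z=t^{\eta_0}$.
\end{itemize}
Combining the two bounds yields $\kappa_\theta(t)=c_\ast t-\frac{3}{2\lambda_\ast}\log t+O(1)$.
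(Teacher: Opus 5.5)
Your proposal is correct and follows essentially the same route as the paper. You convert the matching estimate of Proposition~\ref{prop:matching-new} into two-sided bounds on $u_n$, evaluate the lower bound at one lattice point at bounded negative distance $z_n$ behind $m(t)$, and cover the region ahead by the matching strip together with Corollary~\ref{cor:far-ahead-new}. The differences are only cosmetic: a ceiling instead of a floor, and margins $(1+\theta)/2$ and $\theta/2$ instead of the paper's $\theta\pm2\delta$.
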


\begin{proof}
Fix $\theta\in(0,1)$. By Proposition~\ref{prop:matching-new}, there exist $B_-<B_+$ and a function $\varepsilon(t)\to0$ such that
\begin{equation*}
\Psi_n^{B_+}(t)-\varepsilon(t)\leq V_n(t)\leq\Psi_n^{B_-}(t)+\varepsilon(t)
\end{equation*}
whenever $|z_n(t)|\leq t^{\eta_0}$, where
\begin{equation*}
z_n(t)=n-m(t).
\end{equation*}
Since $u_n(t)=e^{-\lambda_*z_n(t)}V_n(t)$ and
\begin{equation*}
e^{-\lambda_*z_n(t)}\Psi_n^B(t)=U_*(z_n(t)+B),
\end{equation*}
we obtain
\begin{align*}
U_*(z_n(t)+B_+)-e^{-\lambda_*z_n(t)}\varepsilon(t)
&\leq u_n(t) \\
&\leq U_*(z_n(t)+B_-)+e^{-\lambda_*z_n(t)}\varepsilon(t).
\end{align*}

Choose $\delta>0$ such that
\begin{equation*}
0<2\delta<\min\{\theta,1-\theta\}.
\end{equation*}
Since $U_*$ is strictly decreasing, with $U_*(-\infty)=1$ and $U_*(+\infty)=0$, there exists $R_\theta>0$ such that
\begin{equation*}
U_*(-R_\theta+B_+)>\theta+2\delta,
\qquad
U_*(R_\theta+B_-)<\theta-2\delta.
\end{equation*}

We now make the time choices explicit. Since $R_\theta$ is fixed and $\eta_0>0$, there exists $T_0=T_0(R_\theta)$ such that
\begin{equation*}
R_\theta+1\leq t^{\eta_0},
\qquad t\geq T_0.
\end{equation*}
Moreover, since $\varepsilon(t)\to0$, there exists $T_1=T_1(\theta)$ such that
\begin{equation*}
\varepsilon(t)<\delta e^{-\lambda_*(R_\theta+1)},
\qquad t\geq T_1.
\end{equation*}
Let $n_t^-:=\lfloor m(t)-R_\theta\rfloor,$ where $\lfloor \cdot\rfloor$  denotes the greatest integer function. Then
\begin{equation*}
z_{n_t^-}(t)\in(-R_\theta-1,-R_\theta].
\end{equation*}
Thus, for $t\geq T_0$, the matching estimate applies at $n_t^-$. Recalling that $\lambda_*>0$, the preceding inclusion yields
\begin{equation*}
e^{-\lambda_*z_{n_t^-}(t)}\varepsilon(t)
\leq e^{\lambda_*(R_\theta+1)}\varepsilon(t).
\end{equation*}
Hence, for $t\geq T_1$,
\begin{align*}
u_{n_t^-}(t)
&\geq U_*(-R_\theta+B_+)-e^{\lambda_*(R_\theta+1)}\varepsilon(t)\\
&>\theta+2\delta-\delta\\
&>\theta.
\end{align*}
Consequently,
\begin{equation*}
\kappa_\theta(t)\geq n_t^-\geq m(t)-R_\theta-1.
\end{equation*}

For the upper bound, consider first the indices satisfying
\begin{equation*}
R_\theta\leq z_n(t)\leq t^{\eta_0}.
\end{equation*}
The upper comparison and the monotonicity of $U_*$ give
\begin{equation*}
u_n(t)\leq U_*(R_\theta+B_-)+e^{-\lambda_*R_\theta}\varepsilon(t).
\end{equation*}
Since $\varepsilon(t)\to0$, there exists $T_2=T_2(\theta)$ such that
\begin{equation*}
e^{-\lambda_*R_\theta}\varepsilon(t)<\delta,
\qquad t\geq T_2.
\end{equation*}
Hence, for $t\geq T_2$,
\begin{equation*}
u_n(t)<\theta-2\delta+\delta<\theta
\end{equation*}
whenever $R_\theta\leq z_n(t)\leq t^{\eta_0}$.

Finally, Corollary~\ref{cor:far-ahead-new} implies that there exists $T_3=T_3(\theta)$ such that
\begin{equation*}
\sup_{n\geq m(t)+t^{\eta_0}}u_n(t)<\theta,
\qquad t\geq T_3.
\end{equation*}
Taking
\begin{equation*}
T_\theta:=\max\{T_0,T_1,T_2,T_3\},
\end{equation*}
we conclude that $u_n(t)<\theta$ for every $n\geq m(t)+R_\theta$ and every $t\geq T_\theta$. Therefore,
\begin{equation*}
\kappa_\theta(t)\leq m(t)+R_\theta+1.
\end{equation*}
Combining the lower and upper bounds, we have
\begin{equation*}
m(t)-R_\theta-1
\leq \kappa_\theta(t)
\leq m(t)+R_\theta+1.
\end{equation*}
Setting $C_\theta:=R_\theta+1$, we obtain
\begin{equation*}
m(t)-C_\theta
\leq \kappa_\theta(t)
\leq m(t)+C_\theta.
\end{equation*}
This proves the claim.

Combining both bounds proves the claim. Notice that the choices have the dependence
\begin{equation*}
\theta\longmapsto \delta, R_\theta\longmapsto T_\theta.
\end{equation*}
In particular, the bounded remainder in the conclusion is allowed to depend on $\theta$, as stated in Theorem~1.2.
\end{proof}

\begin{remark}
The branching parameter \(q\) changes the dispersion relation and,
therefore, both \(c_\ast\) and \(\lambda_\ast\). After critical
conjugation, however, the leading edge is governed by a one-dimensional
nearest-neighbor operator. The odd-data cancellation produces the
dipole scale \(t^{-3/2}\), while the double root of the dispersion
relation produces the critical tail
\[
z e^{-\lambda_\ast z}.
\]
Matching these two scales gives the logarithmic coefficient
\[
\frac{3}{2\lambda_\ast}.
\]
\end{remark}

\appendix

\section{Auxiliary Estimates for the Linear Problem}\label{Appendix:Auxiliary:Results}

\begin{lemma}\label{Lemma:radial-linear-bound}
Let \(q\geq2\) and \(\alpha,\beta>0\). Let
\(h(t)=\{h_n(t)\}_{n\in\NN_0}\) be the solution of
\[
\begin{rcases}
\partial_t h_0(t)
=
\alpha(q+1)h_1(t)
+\bigl(\beta-\alpha(q+1)\bigr)h_0(t),
\\[1mm]
\partial_t h_n(t)
=
\alpha h_{n-1}(t)
+\alpha q h_{n+1}(t)
+\bigl(\beta-\alpha(q+1)\bigr)h_n(t),
\qquad n\geq1,
\\[1mm]
h_n(0)=\delta_{n0},
\qquad n\in\NN_0.
\end{rcases}
\]
Then
\[
0\leq h_n(t)\leq e^{\beta t},
\qquad
t\geq0,\quad n\in\NN_0.
\]
\end{lemma}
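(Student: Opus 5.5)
Since the radial system is the restriction of the linearized equation on $\TT_{q+1}$ to radial functions, we will work on the tree and then read off the radial components. Set $u(t,x):=h_{d(o,x)}(t)$ and $w(t,x):=e^{-\beta t}u(t,x)$. Then $w$ solves the pure heat equation $\partial_t w=\alpha\Delta_{\TT_{q+1}}w$ with $w(0)=\delta_o$, and it suffices to show $0\le w\le 1$. The bound to prove is exactly $0\le w\le 1$, which follows from positivity and mass conservation (or from the maximum principle) for the heat semigroup.

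\textbf{Step 1: well-posedness and the semigroup representation.} The operator $\alpha\Delta_{\TT_{q+1}}$ is bounded on $\ell^\infty(\TT_{q+1})$ and on $\ell^1(\TT_{q+1})$, with norm at most $2\alpha(q+1)$. Hence $w(t)=e^{t\alpha\Delta}\delta_o$ is given by a norm-convergent exponential series, and the radial system has a unique bounded solution. By uniqueness, this bounded solution coincides with the radial reduction of $w$. We will interpret the statement as referring to this bounded solution; otherwise uniqueness can fail for infinite systems.

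\textbf{Step 2: positivity.} Write $\alpha\Delta=\alpha A-\alpha(q+1)I$, where $A$ is the adjacency operator, whose entries are nonnegative. Then
\[
e^{t\alpha\Delta}=e^{-\alpha(q+1)t}\sum_{k\ge0}\frac{(\alpha t)^k A^k}{k!},
\]
which has nonnegative entries, so $w\ge0$ and therefore $h_n\ge0$. For the upper bound, use $\sum_y A(x,y)=q+1$ for every $x$. This gives $e^{t\alpha\Delta}\mathbf 1=\mathbf 1$, and combined with positivity, $0\le (e^{t\alpha\Delta}f)(x)\le \sup f$ for any $0\le f$. Applying this with $f=\delta_o\le 1$ yields $w\le1$. (Alternatively, $\sum_x w(t,x)=1$, since the kernel is symmetric and row sums are preserved.) Undoing the scaling gives $0\le h_n(t)\le e^{\beta t}$.

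The only delicate point is Step 1, namely justifying uniqueness, so that the solution of the stated system is the one given by the semigroup. Boundedness of the generator on $\ell^\infty$ makes this a standard linear ODE argument in a Banach space: Gronwall's inequality applied to the difference of two bounded solutions shows that the difference vanishes.
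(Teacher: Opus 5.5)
Your argument is correct and is essentially the paper's: an exponential series of an operator with nonnegative entries is positive, and the generator annihilates constants, so $0\le e^{-\beta t}h_n(t)\le 1$. The only differences are that the paper runs this directly on the radial operator on $\ell^\infty(\NN_0)$ instead of lifting to the tree, and that it leaves implicit, by writing $h(t)=e^{t(A+\beta I)}\delta_0$, the restriction to bounded solutions that you make explicit.
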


\begin{proof}
Define the bounded operator \(A\) on \(\ell^\infty(\NN_0)\) by
\[
(Az)_0
:=
\alpha(q+1)(z_1-z_0),
\]
and, for \(n\geq1\),
\[
(Az)_n
:=
\alpha z_{n-1}
+\alpha q z_{n+1}
-\alpha(q+1)z_n.
\]
Then the system can be written as
\[
\partial_t h=(A+\beta I)h,
\qquad
h(0)=\delta_0,
\]
and hence
\[
h(t)=e^{t(A+\beta I)}\delta_0.
\]

We first observe that the semigroup generated by \(A\) is positive.
Indeed, setting
\[
B:=A+\alpha(q+1)I,
\]
we have
\[
(Bz)_0=\alpha(q+1)z_1,\quad \text{and} \quad  (Bz)_n
=
\alpha z_{n-1}+\alpha qz_{n+1},\quad n\geq1
\]
Clearly \(B\) is a positive operator, and
\[
e^{tA}
=
e^{-\alpha(q+1)t}e^{tB}
=
e^{-\alpha(q+1)t}
\sum_{k=0}^\infty\frac{t^k}{k!}B^k
\]
is positive as well. Consequently,
\(e^{t(A+\beta I)}=e^{\beta t}e^{tA}\) is a positive semigroup.

Now,
\[
0\leq \delta_{n0}\leq1,
\qquad n\in\NN_0.
\]
Hence, coordinate by coordinate,
\[
0
\leq
\bigl(e^{t(A+\beta I)}\delta_0\bigr)_n
\leq
\bigl(e^{t(A+\beta I)}\mathbf 1\bigr)_n,
\qquad n\in\NN_0,
\]
where
\[
\mathbf 1=(1,1,\ldots).
\]
Since
\[
A\mathbf 1=0,
\]
we obtain
\[
e^{t(A+\beta I)}\mathbf 1
=
e^{\beta t}\mathbf 1.
\]
Therefore, for every \(n\in\NN_0\),
\[
0
\leq
h_n(t)
=
\bigl(e^{t(A+\beta I)}\delta_0\bigr)_n
\leq
e^{\beta t}.
\]
This proves the result.
\end{proof}

\begin{lemma}\label{Lemma:Solution:Recurrence}
Let \(q\geq2\) and \(\alpha,\beta>0\). For each fixed
\(\lambda>\beta\), consider the recurrence relation
\begin{equation}
\label{eq:Laplace-bulk-unnormalized-appendix}
qH_{n+1}(\lambda)
-\frac{\lambda-\beta+\alpha(q+1)}{\alpha}H_n(\lambda)
+H_{n-1}(\lambda)
=0,
\qquad n\geq1.
\end{equation}
Then its general solution is given by
\[
H_n(\lambda)
=
A_-(\lambda)r_-(\lambda)^n
+
A_+(\lambda)r_+(\lambda)^n,
\]
where
\[
r_{\pm}(\lambda)
=
\frac{
\lambda-\beta+\alpha(q+1)
\pm
\sqrt{\bigl(\lambda-\beta+\alpha(q+1)\bigr)^2
-4\alpha^2q}
}
{2\alpha q}.
\]
Moreover,
\[
0<r_-(\lambda)<1<r_+(\lambda).
\]
In particular, if
\[
\sup_{n\geq0}|H_n(\lambda)|<\infty,
\]
then necessarily
\[
A_+(\lambda)=0,
\]
and hence
\[
H_n(\lambda)
=
H_0(\lambda)r_-(\lambda)^n.
\]
\end{lemma}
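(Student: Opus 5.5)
The plan is to treat the recurrence as a constant-coefficient linear difference equation and study the roots of its characteristic polynomial. Set $s:=\lambda-\beta+\alpha(q+1)$. Inserting the ansatz $H_n=r^n$ into \eqref{eq:Laplace-bulk-unnormalized-appendix} gives
\[
qr^2-\frac{s}{\alpha}r+1=0,
\]
whose roots are exactly $r_\pm(\lambda)$. Since $\lambda>\beta$, we have $s>\alpha(q+1)\geq 2\alpha\sqrt q$. Hence the discriminant $s^2-4\alpha^2q$ is strictly positive and the two roots are real and distinct.

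Standard theory of second-order linear recurrences then gives the general solution. The recurrence is second order with $q\neq 0$, so it determines $H_{n+1}$ from $H_n$ and $H_{n-1}$. Its solution space is therefore two-dimensional, parametrized by $(H_0,H_1)$. The sequences $r_-^n$ and $r_+^n$ are linearly independent solutions, because $r_-\neq r_+$ and their Vandermonde determinant $r_+-r_-$ is nonzero. This yields the representation with constants $A_\pm(\lambda)$.

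For the location of the roots, I use Vieta's formulas:
\[
r_+r_-=\frac1q>0,\qquad r_++r_-=\frac{s}{\alpha q}>0.
\]
So both roots are positive. To place $1$ strictly between them, I evaluate the characteristic polynomial $P(r)=qr^2-(s/\alpha)r+1$ at $r=1$:
\[
P(1)=q+1-\frac{s}{\alpha}=\frac{\alpha(q+1)-s}{\alpha}=\frac{\beta-\lambda}{\alpha}<0.
\]
Since $P$ has positive leading coefficient, $P(1)<0$ forces $r_-<1<r_+$. This step is the only place where the hypothesis $\lambda>\beta$ is genuinely used. I expect it to be the main point to check, although it is elementary.

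Finally, assume $\sup_n|H_n|<\infty$. Suppose $A_+\neq0$. Then
\[
|H_n|\geq |A_+|r_+^n-|A_-|r_-^n\to\infty,
\]
because $r_+>1$ and $r_-^n\to0$. This is a contradiction, so $A_+=0$. Evaluating at $n=0$ then gives $A_-=H_0$, and hence $H_n=H_0r_-^n$.
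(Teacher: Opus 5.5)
Your proposal is correct and follows essentially the same route as the paper: form the characteristic polynomial, show its discriminant is positive, use Vieta's formulas to get positivity of both roots, use $P(1)=-(\lambda-\beta)/\alpha<0$ to place $1$ strictly between the roots, and use the growth of $r_+^n$ to force $A_+=0$. The differences are cosmetic: the paper locates the roots via the intermediate value theorem on $(0,1)$ and $(1,\infty)$ instead of your observation that a quadratic with positive leading coefficient is negative only between its roots, and you add the Vandermonde justification of linear independence, which the paper leaves implicit.
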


\begin{proof}
For fixed \(\lambda>\beta\), the characteristic polynomial associated with
\eqref{eq:Laplace-bulk-unnormalized-appendix} is
\[
P_\lambda(r)
=
qr^2
-\frac{\lambda-\beta+\alpha(q+1)}{\alpha}r
+1.
\]
Its discriminant is
\[
\Delta_\lambda
=
\left(
\frac{\lambda-\beta+\alpha(q+1)}{\alpha}
\right)^2
-4q.
\]
Since \(\lambda>\beta\),
\[
\frac{\lambda-\beta+\alpha(q+1)}{\alpha}
>
q+1,
\]
and therefore
\[
\Delta_\lambda
>
(q+1)^2-4q
=
(q-1)^2
>0.
\]
Hence \(P_\lambda\) has two distinct real roots, namely
\[
r_{\pm}(\lambda)
=
\frac{
\lambda-\beta+\alpha(q+1)
\pm
\sqrt{\bigl(\lambda-\beta+\alpha(q+1)\bigr)^2
-4\alpha^2q}
}
{2\alpha q}.
\]
It follows that the general solution of
\eqref{eq:Laplace-bulk-unnormalized-appendix} is
\[
H_n(\lambda)
=
A_-(\lambda)r_-(\lambda)^n
+
A_+(\lambda)r_+(\lambda)^n.
\]

We next determine the location of the roots. By Vieta's formulas,
\[
r_-(\lambda)r_+(\lambda)
=
\frac1q
>0,
\]
and
\[
r_-(\lambda)+r_+(\lambda)
=
\frac{\lambda-\beta+\alpha(q+1)}{\alpha q}
>0.
\]
Thus both roots are positive.

Moreover,
\[
P_\lambda(0)=1>0,
\]
whereas
\[
P_\lambda(1)
=
q+1
-
\frac{\lambda-\beta+\alpha(q+1)}{\alpha}
=
-\frac{\lambda-\beta}{\alpha}
<0.
\]
Therefore, by the Intermediate Value Theorem, \(P_\lambda\) has a root
in \((0,1)\). Since
\[
r_-(\lambda)<r_+(\lambda),
\]
this root must be \(r_-(\lambda)\). Hence
\[
0<r_-(\lambda)<1.
\]

On the other hand,
\[
P_\lambda(r)\to+\infty
\qquad\text{as }r\to+\infty.
\]
Since \(P_\lambda(1)<0\), the Intermediate Value Theorem implies that
\(P_\lambda\) has a root in \((1,\infty)\). This root must be
\(r_+(\lambda)\), and therefore
\[
r_+(\lambda)>1.
\]
Consequently,
\[
0<r_-(\lambda)<1<r_+(\lambda).
\]

Finally, suppose that
\[
\sup_{n\geq0}|H_n(\lambda)|<\infty.
\]
Since \(r_+(\lambda)>1\), the term
\[
A_+(\lambda)r_+(\lambda)^n,
\]
is unbounded as \(n\to\infty\) unless
\[
A_+(\lambda)=0.
\]
Hence
\[
H_n(\lambda)
=
A_-(\lambda)r_-(\lambda)^n.
\]
Evaluating at \(n=0\), we obtain
\[
A_-(\lambda)=H_0(\lambda),
\]
and therefore
\[
H_n(\lambda)
=
H_0(\lambda)r_-(\lambda)^n.
\]
\end{proof}

\begin{lemma}\label{Lemma:Laplace-Bessel-over-t}
Let \(a>0\), \(s>a\), and \(m\in\NN\). Then
\begin{equation}
\label{eq:Laplace-Bessel-over-t}
\int_0^\infty
\frac{I_m(at)}{t}e^{-st}\,dt
=
\frac{1}{m}
\left(
\frac{s-\sqrt{s^2-a^2}}{a}
\right)^m.
\end{equation}
\end{lemma}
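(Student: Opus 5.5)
The plan is to prove the formula in the more general form
\[
\int_0^\infty \frac{I_m(at)}{t}e^{-st}\,dt=\frac{1}{m}\,r^m,\qquad r:=\frac{s-\sqrt{s^2-a^2}}{a}\in(0,1),
\]
by starting from the classical Laplace transform of \(I_m\) itself and integrating with respect to \(s\).

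\textbf{Step 1: the transform of \(I_m\).} First I would recall, or rederive, the identity
\[
\int_0^\infty I_m(at)e^{-st}\,dt=\frac{r^m}{\sqrt{s^2-a^2}},\qquad s>a.
\]
To derive it, insert the integral representation \(I_m(z)=\frac1\pi\int_0^\pi e^{z\cos\phi}\cos(m\phi)\,d\phi\), valid for integer \(m\). Then use Fubini, which is justified because \(s>a\) gives absolute convergence. This reduces the left-hand side to
\[
\frac1\pi\int_0^\pi\frac{\cos m\phi}{s-a\cos\phi}\,d\phi .
\]
This integral is evaluated by the standard expansion
\[
\frac{1}{s-a\cos\phi}=\frac{1}{\sqrt{s^2-a^2}}\Bigl(1+2\sum_{k\ge1}r^k\cos k\phi\Bigr),
\]
which is the Poisson kernel with \(r\) as above, followed by orthogonality of the cosines.

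\textbf{Step 2: integrate in \(s\).} Define \(F(s):=\int_0^\infty t^{-1}I_m(at)e^{-st}\,dt\). This integral converges at \(t=0\) because \(I_m(at)=O(t^m)\) and \(m\ge1\). It converges at \(\infty\) because \(I_m(at)\le e^{at}\) and \(s>a\). Differentiating under the integral sign, justified by dominated convergence on \([s_0,\infty)\) with \(s_0>a\), gives
\[
F'(s)=-\frac{r(s)^m}{\sqrt{s^2-a^2}} .
\]
A direct computation gives \(r'(s)=-r/\sqrt{s^2-a^2}\). Hence
\[
\frac{d}{ds}\,\frac{r^m}{m}=r^{m-1}r'=-\frac{r^m}{\sqrt{s^2-a^2}}=F'(s).
\]
Therefore \(F-r^m/m\) is constant on \((a,\infty)\).

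\textbf{Step 3: fix the constant.} As \(s\to\infty\), dominated convergence gives \(F(s)\to0\), using the dominating function \(t^{-1}I_m(at)e^{-(a+1)t}\) for \(s\ge a+1\). Also \(r(s)=a/(s+\sqrt{s^2-a^2})\to0\). Hence the constant is zero, which proves the lemma.

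The main obstacle is Step 1. If citing the standard table transform is acceptable, that step is immediate. Otherwise the only delicate point is the justification of the Poisson-kernel expansion. It holds because \(0<r<1\) and \(s=\tfrac a2(r+r^{-1})\), so \(s-a\cos\phi=\tfrac{a}{2r}|1-re^{i\phi}|^2\). Everything else consists of routine differentiation-under-the-integral arguments.
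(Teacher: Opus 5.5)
Your proof is correct and takes essentially the same route as the paper. Both arguments start from the classical transform $\int_0^\infty I_m(at)e^{-st}\,dt=r^m/\sqrt{s^2-a^2}$ and integrate it in $s$ over $(s,\infty)$. The paper does this through the rule $\mathcal L[f/t](s)=\int_s^\infty\mathcal L[f](\tau)\,d\tau$ and the substitution $\omega=(\tau-\sqrt{\tau^2-a^2})/a$; your check that $\frac{d}{ds}(r^m/m)=-r^m/\sqrt{s^2-a^2}$, with the constant fixed at $s=\infty$, is the same computation. Your Poisson-kernel derivation in Step 1 and the dominated-convergence justifications are a self-contained addition that replaces the paper's citation of Gradshteyn--Ryzhik.
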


\begin{proof}
Recall the classical Laplace transform formula
\[
\int_0^\infty e^{-st}I_m(at)\,dt
=
\frac{1}{\sqrt{s^2-a^2}}
\left(
\frac{s-\sqrt{s^2-a^2}}{a}
\right)^m,
\qquad s>a,
\]
see \cite[Formula~6.611.1]{Gradshteyn-Ryzhik-2015}.

We also use the standard identity
\[
\mathcal L\left[\frac{f(t)}{t}\right](s)
=
\int_s^\infty \mathcal L[f](\tau)\,d\tau.
\]
Applying it with \(f(t)=I_m(at)\), we obtain
\[
\int_0^\infty
\frac{I_m(at)}{t}e^{-st}\,dt
=
\int_s^\infty
\frac{1}{\sqrt{\tau^2-a^2}}
\left(
\frac{\tau-\sqrt{\tau^2-a^2}}{a}
\right)^m
d\tau.
\]

Set
\[
\omega
=
\frac{\tau-\sqrt{\tau^2-a^2}}{a}.
\]
Then
\[
\tau
=
\frac{a}{2}\left(\omega+\frac{1}{\omega}\right),
\]
and
\[
\sqrt{\tau^2-a^2}
=
\frac{a}{2}\left(\frac{1}{\omega}-\omega\right).
\]
Differentiating,
\[
d\tau
=
\frac{a}{2}\left(1-\frac{1}{\omega^2}\right)d\omega,
\]
and therefore
\[
\frac{d\tau}{\sqrt{\tau^2-a^2}}
=
-\frac{d\omega}{\omega}.
\]

When \(\tau=s\), we have
\[
\omega
=
\frac{s-\sqrt{s^2-a^2}}{a},
\]
whereas
\[
\omega\to0
\quad\text{as }\tau\to\infty.
\]
Hence,
\[
\begin{aligned}
\int_0^\infty
\frac{I_m(at)}{t}e^{-st}\,dt
&=
\int_0^{\frac{s-\sqrt{s^2-a^2}}{a}}
\omega^{m-1}\,d\omega=
\frac{1}{m}
\left(
\frac{s-\sqrt{s^2-a^2}}{a}
\right)^m.
\end{aligned}
\]
This proves \eqref{eq:Laplace-Bessel-over-t}.
\end{proof}

\begin{lemma}\label{Lemma:Bessel-series-comparison}
Let \(q\geq2\), \(n\in\NN_0\), and \(z>0\). Then
\[
(n+1)I_{n+1}(z)
\leq
\sum_{j=0}^{\infty}
q^{-j}(n+2j+1)I_{n+2j+1}(z)
\leq
\frac{q(q+1)}{(q-1)^2}
(n+1)I_{n+1}(z).
\]
\end{lemma}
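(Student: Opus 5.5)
The lower bound is immediate. Every term of the series is nonnegative, since $I_m(z)>0$ for $z>0$, and the $j=0$ term is exactly $(n+1)I_{n+1}(z)$. So the whole content of the lemma is the upper bound. For that I would control the ratio of consecutive even-step terms. The natural tool is the classical monotonicity of ratios of modified Bessel functions, $I_{\nu+1}(z)<I_\nu(z)$ for $\nu\geq0$, $z>0$. It implies $I_{n+2j+1}(z)\leq I_{n+1}(z)$ for every $j\geq0$, and hence
\[
\sum_{j=0}^\infty q^{-j}(n+2j+1)I_{n+2j+1}(z)
\leq
I_{n+1}(z)\sum_{j=0}^\infty q^{-j}(n+2j+1).
\]

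Next I would evaluate the geometric-type sum explicitly:
\[
\sum_{j\geq0}q^{-j}=\frac{q}{q-1},
\qquad
\sum_{j\geq0}jq^{-j}=\frac{q}{(q-1)^2}.
\]
This gives
\[
\sum_{j\ge0}q^{-j}(n+2j+1)=(n+1)\frac{q}{q-1}+\frac{2q}{(q-1)^2}.
\]
Since $n+1\geq1$, the second term is bounded by $(n+1)\frac{2q}{(q-1)^2}$. The total is therefore at most
\[
(n+1)\frac{q(q-1)+2q}{(q-1)^2}=(n+1)\frac{q(q+1)}{(q-1)^2},
\]
which is exactly the stated constant. This strongly suggests that this crude termwise bound is the intended route.

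The only nontrivial input is the inequality $I_{\nu+1}(z)\leq I_\nu(z)$. If I wanted it self-contained, I would prove it from the recurrence $I_{\nu-1}(z)-I_{\nu+1}(z)=\frac{2\nu}{z}I_\nu(z)>0$ for $\nu\geq1$, which already appears in the remark for $q=1$. This recurrence gives $I_{\nu+1}<I_{\nu-1}$, i.e. decrease along steps of two, and that is precisely what the sum uses: the indices $n+2j+1$ all have the same parity as $n+1$. So one does not even need the full monotonicity in unit steps. Induction in $j$ gives $I_{n+2j+1}(z)\leq I_{n+1}(z)$ directly from positivity of $I_{n+2j}(z)$ and the recurrence. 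The main, mild, obstacle is justifying the interchange and convergence of the infinite series. This follows from the same bound, since the majorant $\sum q^{-j}(n+2j+1)$ converges for $q\geq2$.
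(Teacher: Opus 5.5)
Your proof is correct and follows the paper's argument essentially verbatim: the lower bound comes from the $j=0$ term, and the upper bound comes from $I_{n+2j+1}(z)\le I_{n+1}(z)$ together with the explicit evaluation of $\sum_{j\ge0}q^{-j}(n+2j+1)$. The only difference is that you justify the monotonicity through the recurrence $I_{\nu-1}(z)-I_{\nu+1}(z)=\frac{2\nu}{z}I_\nu(z)>0$, which gives decrease in steps of two, whereas the paper cites Amos's ratio bound $I_{\nu+1}(z)/I_\nu(z)<1$; either one suffices.
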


\begin{proof}
The classical ratio estimate
\[
0<\frac{I_{\nu+1}(z)}{I_\nu(z)}<1,
\qquad z>0,\quad \nu\geq0,
\]
see, for instance, Amos \cite{Amos-1974}, implies that for every fixed
\(z>0\) the sequence \(m\mapsto I_m(z)\) is decreasing for
\(m\in\NN_0\). Therefore,
\[
I_{n+2j+1}(z)
\leq
I_{n+1}(z),
\qquad j\geq0.
\]
It follows that
\[
\begin{aligned}
\sum_{j=0}^{\infty}
q^{-j}(n+2j+1)I_{n+2j+1}(z)
&\leq
I_{n+1}(z)
\sum_{j=0}^{\infty}
q^{-j}(n+2j+1).
\end{aligned}
\]
Using
\[
\sum_{j=0}^{\infty}q^{-j}
=
\frac{q}{q-1},
\]
and
\[
\sum_{j=0}^{\infty}jq^{-j}
=
\frac{q}{(q-1)^2},
\]
we obtain
\[
\begin{aligned}
\sum_{j=0}^{\infty}
q^{-j}(n+2j+1)
&=
(n+1)\frac{q}{q-1}
+
2\frac{q}{(q-1)^2}
\\
&=
\frac{
q\bigl((q-1)n+q+1\bigr)
}{
(q-1)^2
}.
\end{aligned}
\]
Since
\[
(q-1)n+q+1
\leq
(q+1)(n+1),
\]
we conclude that
\[
\sum_{j=0}^{\infty}
q^{-j}(n+2j+1)I_{n+2j+1}(z)
\leq
\frac{q(q+1)}{(q-1)^2}
(n+1)I_{n+1}(z).
\]

For the lower bound, it is enough to retain the term \(j=0\). Since
\(I_m(z)>0\) for \(m\in\NN_0\) and \(z>0\),
\[
\sum_{j=0}^{\infty}
q^{-j}(n+2j+1)I_{n+2j+1}(z)
\geq
(n+1)I_{n+1}(z).
\]
This proves the result.
\end{proof}

\begin{lemma}\label{Lemma:Bessel-ratio-uniform}
For every \(n\in\NN_0\) and \(z>0\),
\[
\frac{1}{2+\sqrt2}
\frac{z}{\left(1+n^2+z^2\right)^{1/2}}
\leq
\frac{I_{n+1}(z)}{I_n(z)}
\leq
2
\frac{z}{\left(1+n^2+z^2\right)^{1/2}}.
\]
\end{lemma}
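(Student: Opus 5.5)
We need to prove the two-sided bound for $I_{n+1}(z)/I_n(z)$ with the scale $z/\sqrt{1+n^2+z^2}$. The natural route is through the classical Amos-type bounds for the Bessel ratio, which are sharp up to constants, followed by elementary comparisons of square roots.

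\emph{Starting point.} We would use the inequalities from \cite{Amos-1974}, valid for $\nu\geq0$ and $z>0$:
\[
\frac{z}{\nu+1+\sqrt{(\nu+1)^2+z^2}}
\leq
\frac{I_{\nu+1}(z)}{I_\nu(z)}
\leq
\frac{z}{\nu+\sqrt{\nu^2+z^2}}.
\]
We apply them with $\nu=n$. Write $R:=\sqrt{1+n^2+z^2}$.

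\emph{Upper bound.} We need
\[
n+\sqrt{n^2+z^2}\ \geq\ \tfrac12 R .
\]
If $n\geq1$, then $1+n^2\leq 2n^2$, so $R\leq\sqrt2\sqrt{n^2+z^2}$. Hence
\[
n+\sqrt{n^2+z^2}\ \geq\ \sqrt{n^2+z^2}\ \geq\ R/\sqrt2\ \geq\ R/2 .
\]
If $n=0$, the Amos upper bound only gives the ratio $\leq 1$, since the denominator is $z$. We therefore need an extra argument when $z$ is small. Two facts suffice:
\begin{itemize}
\item[] $I_1(z)/I_0(z)\leq z/2$ for all $z>0$, which follows from the series expansions (or from the Riccati/Turán-type bounds);
\item[] $I_1(z)/I_0(z)<1$.
\end{itemize}
Together these give $I_1/I_0\leq\min\{1,z/2\}$, and we check $\min\{1,z/2\}\leq 2z/\sqrt{1+z^2}$:
\begin{itemize}
\item[] for $z\leq1$: $z/2\leq 2z/\sqrt2$;
\item[] for $z\geq1$: $1\leq 2z/\sqrt{1+z^2}$, because $\sqrt{1+z^2}\leq\sqrt2\,z\leq 2z$.
\end{itemize}

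\emph{Lower bound.} We need
\[
n+1+\sqrt{(n+1)^2+z^2}\ \leq\ (2+\sqrt2)\,R .
\]
Since $(n+1)^2\leq 2(1+n^2)$, we get $n+1\leq\sqrt2\sqrt{1+n^2}\leq\sqrt2\,R$. Likewise
\[
\sqrt{(n+1)^2+z^2}\ \leq\ \sqrt{2(1+n^2)+z^2}\ \leq\ \sqrt2\,R .
\]
This gives the bound with constant $2\sqrt2\leq 2+\sqrt2$, which is enough.

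\emph{Main obstacle.} The delicate point is the upper bound at $n=0$ with $z$ small, where the Amos denominator degenerates to $z$. This is handled by the elementary estimate $I_1(z)\leq (z/2)I_0(z)$, which follows termwise: comparing the series, $I_1(z)=\sum_k (z/2)^{2k+1}/(k!(k+1)!)\leq (z/2)\sum_k (z/2)^{2k}/(k!)^2$. All remaining steps are inequalities between square roots.
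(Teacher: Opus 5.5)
Your proof is correct and follows essentially the same route as the paper. You use Amos-type bounds for $I_{n+1}/I_n$ and then compare the denominators with $\sqrt{1+n^2+z^2}$; your checks $(n+1)^2\le 2(1+n^2)$, $1+n^2\le 2n^2$ for $n\ge1$, and the termwise bound $I_1(z)\le \frac z2 I_0(z)$ are all valid. The one difference is that the paper uses the sharper half-integer Amos pair, with denominators $n+\frac12+\sqrt{z^2+(n+\frac32)^2}$ and $n+\frac12+\sqrt{z^2+(n+\frac12)^2}$. Since $\sqrt{z^2+(n+\frac12)^2}\ge\frac12\sqrt{1+n^2+z^2}$ for every $n\ge0$, the paper's upper bound holds uniformly, without the separate $n=0$, small-$z$ case that your weaker bound $z/(n+\sqrt{n^2+z^2})$ requires.
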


\begin{proof}
The Amos bounds \cite{Amos-1974} give
\[
\frac{z}
{n+\frac12+\sqrt{z^2+\left(n+\frac32\right)^2}}
\leq
\frac{I_{n+1}(z)}{I_n(z)}
\leq
\frac{z}
{n+\frac12+\sqrt{z^2+\left(n+\frac12\right)^2}}.
\]

We first estimate the denominator in the lower bound. Since
\[
n+\frac12
\leq
\sqrt2\left(1+n^2+z^2\right)^{1/2},
\]
and
\[
\sqrt{z^2+\left(n+\frac32\right)^2}
\leq
2\left(1+n^2+z^2\right)^{1/2},
\]
we obtain
\[
n+\frac12
+
\sqrt{z^2+\left(n+\frac32\right)^2}
\leq
(2+\sqrt2)
\left(1+n^2+z^2\right)^{1/2}.
\]
Hence,
\[
\frac{I_{n+1}(z)}{I_n(z)}
\geq
\frac{1}{2+\sqrt2}
\frac{z}{\left(1+n^2+z^2\right)^{1/2}}.
\]

For the upper bound, observe that
\[
\sqrt{z^2+\left(n+\frac12\right)^2}
\geq
\frac12
\left(1+n^2+z^2\right)^{1/2}.
\]
Therefore,
\[
n+\frac12
+
\sqrt{z^2+\left(n+\frac12\right)^2}
\geq
\frac12
\left(1+n^2+z^2\right)^{1/2},
\]
which gives
\[
\frac{I_{n+1}(z)}{I_n(z)}
\leq
2
\frac{z}{\left(1+n^2+z^2\right)^{1/2}}.
\]
This completes the proof.
\end{proof}

\begin{lemma}\label{Lemma:Bessel-uniform-estimate}
There exist absolute constants \(c_B,C_B>0\) such that, for every
\(n\in\NN_0\) and \(z>0\),
\[
c_B
\frac{
\exp\left\{
\sqrt{n^2+z^2}
+
n\log\left(
\frac{z}{n+\sqrt{n^2+z^2}}
\right)
\right\}
}{
\left(1+n^2+z^2\right)^{1/4}
}
\leq
I_n(z),
\]
and
\[
I_n(z)
\leq
C_B
\frac{
\exp\left\{
\sqrt{n^2+z^2}
+
n\log\left(
\frac{z}{n+\sqrt{n^2+z^2}}
\right)
\right\}
}{
\left(1+n^2+z^2\right)^{1/4}
}.
\]
For \(n=0\), the logarithmic term is understood to be zero.
\end{lemma}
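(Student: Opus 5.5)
The plan is to derive the estimate from the Amos ratio bounds in Lemma~\ref{Lemma:Bessel-ratio-uniform} together with a direct analysis at $n=0$, through a telescoping product. Throughout, set
\[
E_n(z):=\sqrt{n^2+z^2}+n\log\frac{z}{n+\sqrt{n^2+z^2}} .
\]
The key observation is that $\partial_n E_n(z)=\log\bigl(z/(n+\sqrt{n^2+z^2})\bigr)$. Hence
\[
E_{n+1}(z)-E_n(z)=\int_n^{n+1}\log\frac{z}{\nu+\sqrt{\nu^2+z^2}}\,d\nu ,
\]
and the Amos bounds recorded in the proof of Lemma~\ref{Lemma:Bessel-ratio-uniform} have exactly this form:
\[
\log\frac{I_{n+1}(z)}{I_n(z)}\ \text{lies between}\ \log\frac{z}{(n+\tfrac12)+\sqrt{(n+\tfrac32)^2+z^2}}\ \text{and}\ \log\frac{z}{(n+\tfrac12)+\sqrt{(n+\tfrac12)^2+z^2}} .
\]
Write $\log I_n(z)=\log I_0(z)+\sum_{k=0}^{n-1}\log\bigl(I_{k+1}(z)/I_k(z)\bigr)$ and compare each term with $E_{k+1}-E_k$.

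The base case comes first. For $n=0$ we need $I_0(z)\asymp e^{z}(1+z^2)^{-1/4}$ uniformly in $z>0$. This follows from $I_0(z)\sim e^z/\sqrt{2\pi z}$ as $z\to\infty$, together with the facts that $I_0$ is continuous and positive with $I_0(0)=1$. It matches $E_0=z$.

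For the increments, let $g(\nu):=\log\bigl(\nu+\sqrt{\nu^2+z^2}\bigr)$. This function is increasing and concave in $\nu$, with $g'(\nu)=(\nu^2+z^2)^{-1/2}$. The difference between the Amos upper bound term and $-\int_k^{k+1}g$ is controlled by $g(k+1)-g(k)\le (k^2+z^2)^{-1/2}$, up to the half-shift. A concavity/midpoint argument should in fact give an error of order $g''\sim \nu/(\nu^2+z^2)^{3/2}$ plus a term from replacing $(n+\tfrac12)$ by the denominators with $n+\tfrac32$. I would then sum these errors over $k=0,\dots,n-1$.

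The main obstacle is that the naive per-step error $(k^2+z^2)^{-1/2}$ sums to about $\operatorname{arsinh}(n/z)$, which is unbounded. This error is precisely what must produce the change of prefactor from $(1+z^2)^{-1/4}$ to $(1+n^2+z^2)^{-1/4}$, because
\[
\tfrac14\log\frac{1+n^2+z^2}{1+z^2}
\]
has the same growth. So the plan is to show
\[
\log\frac{I_{k+1}}{I_k}-(E_{k+1}-E_k)=-\tfrac14\bigl[\log(1+(k+1)^2+z^2)-\log(1+k^2+z^2)\bigr]+r_k,\qquad \sum_k|r_k|\le C .
\]
Expanding the Amos bounds to second order, the first-order correction should be
\[
-\tfrac12\,\frac{\nu}{\nu^2+z^2}\cdot\tfrac12 ,
\]
which equals $-\tfrac14\,\partial_\nu\log(\nu^2+z^2)$. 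The remainders are $O\bigl((\nu^2+z^2)^{-1}\bigr)$, which are summable in $k$ uniformly in $z$ (the sum is bounded by $C\min(1,1/z)$). Since the upper and lower Amos bounds differ only at this same summable order once the half-shifts are tracked, both the upper and lower bounds close.

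If this sharp bookkeeping becomes too delicate for small $n,z$, I would fall back on the uniform Debye asymptotics
\[
I_n(nx)\sim\frac{e^{n\eta}}{\sqrt{2\pi n}\,(1+x^2)^{1/4}}
\]
for large $n$, uniformly in $x>0$. Since $n\eta=E_n(z)$, this gives the estimate directly for $n\ge N_0$. The finitely many remaining $n<N_0$ are then covered by the $n=0$ analysis combined with finitely many applications of Lemma~\ref{Lemma:Bessel-ratio-uniform}, each of which costs only a bounded factor.
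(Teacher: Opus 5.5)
Your main telescoping argument cannot produce the upper bound. The claim that the two Amos bounds ``differ only at a summable order'' is false.

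Write $\mu=k+\tfrac12$ and $R=\sqrt{\mu^2+z^2}$, with your $g$. The logarithm of Amos's upper bound is exactly $\log z-g(\mu)$, while $E_{k+1}-E_k=\log z-\int_k^{k+1}g$. Since $g$ is concave, $\int_k^{k+1}g\le g(k+\tfrac12)$. So each Amos upper factor is at most $e^{E_{k+1}-E_k}$, differing from it only by a summable slack, and no $-\frac{\mu}{2R^2}$ term is left to sum. Telescoping therefore gives at best $I_n(z)\le I_0(z)e^{E_n(z)-z}\le C e^{E_n(z)}(1+z^2)^{-1/4}$. This misses the required factor $\bigl((1+z^2)/(1+n^2+z^2)\bigr)^{1/4}$, which is unbounded.

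The loss is genuine, not a bookkeeping artefact. As $z\to0$ the product of Amos upper bounds is $z^n/(2n-1)!!$, whereas $I_n(z)/I_0(z)\sim z^n/(2n)!!$, a discrepancy of order $\sqrt{\pi n}$. Consistently, the per-step log-gap between the two Amos bounds is $\log\frac{\mu+\sqrt{(\mu+1)^2+z^2}}{\mu+R}\approx\frac{\mu}{R(\mu+R)}$. For $\mu\gg z$ this is $\approx\frac{1}{2\mu}$, exactly the non-summable order you are trying to resolve. Also, $-\tfrac14\partial_\nu\log(\nu^2+z^2)=-\frac{\nu}{2(\nu^2+z^2)}$, not $-\frac{\nu}{4(\nu^2+z^2)}$.

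Only the lower bound survives. Since $\frac{\mu}{R(\mu+R)}-\frac{\mu}{2R^2}=\frac{\mu z^2}{2R^2(\mu+R)^2}$ has a sum bounded uniformly in $n$ and $z$, telescoping Amos's \emph{lower} bound does give $I_n(z)\ge c\,e^{E_n(z)}(1+n^2+z^2)^{-1/4}$.

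So the upper bound needs genuinely sharp asymptotic input. Your fallback supplies it and should be the main argument, not a contingency. Debye's expansion, uniform in $x\in(0,\infty)$, gives $I_n(z)=\frac{e^{E_n(z)}}{\sqrt{2\pi}\,(n^2+z^2)^{1/4}}(1+o(1))$ uniformly in $z>0$ as $n\to\infty$. This yields both bounds for $n\ge N_0$.

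For each fixed $n<N_0$, the simplest route is to observe that $I_n(z)(1+n^2+z^2)^{1/4}e^{-E_n(z)}$ is continuous and positive on $(0,\infty)$. Its limits are $n^ne^{-n}(1+n^2)^{1/4}/n!$ at $0$ and $1/\sqrt{2\pi}$ at $\infty$, both positive and finite. If you use Lemma~\ref{Lemma:Bessel-ratio-uniform} instead, you must also check that $E_n-E_0$ and $\sum_{k<n}\log\frac{z}{\sqrt{1+k^2+z^2}}$ differ by $O_n(1)$ uniformly in $z$.

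This corrected route differs from the paper's. The paper imports the two-sided Cowling--Meda--Setti bound for the heat kernel $e^{-z}I_n(z)$ on $\mathbb Z$, rewrites $n\xi(z/n)=E_n(z)$, and treats $I_0$ separately. That proof is shorter but outsources exactly the $(1+n^2+z^2)^{-1/4}$ prefactor. Yours derives it from classical special-function asymptotics and even gives the sharp constant $1/\sqrt{2\pi}$ for large $n$.
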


\begin{proof}
Recall that the heat kernel on \(\mathbb Z\) is given by
\[
h_z^{\mathbb Z}(n)
=
e^{-z}I_n(z).
\]
By \cite[Theorem~2.3]{Cowling-Meda-Setti-2000-TAMS}, there exist
absolute constants \(c_B,C_B>0\) such that, for every \(n\geq1\) and
\(z>0\),
\[
c_B
\frac{
\exp\{-z+n\xi(z/n)\}
}{
(1+n^2+z^2)^{1/4}
}
\leq
h_z^{\mathbb Z}(n)
\leq
C_B
\frac{
\exp\{-z+n\xi(z/n)\}
}{
(1+n^2+z^2)^{1/4}
},
\]
where
\[
\xi(s)
=
\sqrt{1+s^2}
+
\log\left(
\frac{s}{1+\sqrt{1+s^2}}
\right),
\qquad s>0.
\]
Since
\[
I_n(z)=e^z h_z^{\mathbb Z}(n),
\]
we obtain
\[
c_B
\frac{
\exp\{n\xi(z/n)\}
}{
(1+n^2+z^2)^{1/4}
}
\leq
I_n(z)
\leq
C_B
\frac{
\exp\{n\xi(z/n)\}
}{
(1+n^2+z^2)^{1/4}
}.
\]

For \(n\geq1\), a direct computation gives
\[
\begin{aligned}
n\xi(z/n)
&=
n\sqrt{1+\frac{z^2}{n^2}}
+
n\log\left(
\frac{z/n}{1+\sqrt{1+z^2/n^2}}
\right)
\\
&=
\sqrt{n^2+z^2}
+
n\log\left(
\frac{z}{n+\sqrt{n^2+z^2}}
\right).
\end{aligned}
\]
This proves the desired estimate for \(n\geq1\).

It remains to consider \(n=0\). The standard estimate for the modified
Bessel function \(I_0\) gives
\[
c_0\frac{e^z}{(1+z)^{1/2}}
\leq
I_0(z)
\leq
C_0\frac{e^z}{(1+z)^{1/2}},
\qquad z>0,
\]
for suitable absolute constants \(c_0,C_0>0\). Moreover, there exist
absolute constants \(c,C>0\) such that
\[
c(1+z^2)^{1/4}
\leq
(1+z)^{1/2}
\leq
C(1+z^2)^{1/4}.
\]
Hence, after possibly decreasing \(c_B\) and increasing \(C_B\),
\[
c_B
\frac{e^z}{(1+z^2)^{1/4}}
\leq
I_0(z)
\leq
C_B
\frac{e^z}{(1+z^2)^{1/4}}.
\]
Since
\[
\sqrt{0^2+z^2}=z,
\]
this is precisely the claimed estimate when \(n=0\).
\end{proof}

\begin{corollary}\label{Cor:Bessel-nplusone-estimate}
Let \(\rho>0\). There exist constants \(c_\rho,C_\rho>0\) such that,
for every \(n\in\NN_0\) and \(t>0\),
\[
c_\rho
\frac{
\exp\left\{
\sqrt{n^2+\rho^2t^2}
+
n\log\left(
\frac{\rho t}
{n+\sqrt{n^2+\rho^2t^2}}
\right)
\right\}
}{
\left(1+n^2+\rho^2t^2\right)^{3/4}
}
\leq
\frac{1}{t}I_{n+1}(\rho t)
\]
and
\[
\frac{1}{t}I_{n+1}(\rho t)
\leq
C_\rho
\frac{
\exp\left\{
\sqrt{n^2+\rho^2t^2}
+
n\log\left(
\frac{\rho t}
{n+\sqrt{n^2+\rho^2t^2}}
\right)
\right\}
}{
\left(1+n^2+\rho^2t^2\right)^{3/4}
}.
\]
More precisely, one may take
\[
c_\rho
=
\frac{\rho c_B}{2+\sqrt2},
\qquad
C_\rho
=
2\rho C_B.
\]
\end{corollary}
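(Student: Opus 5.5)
The plan is to derive the estimate directly from Lemma~\ref{Lemma:Bessel-uniform-estimate} (applied with index $n+1$ and argument $z=\rho t$) together with the ratio bounds of Lemma~\ref{Lemma:Bessel-ratio-uniform} (applied with index $n$). The target constants $c_\rho=\rho c_B/(2+\sqrt2)$ and $C_\rho=2\rho C_B$ suggest the intended route: first write
\[
\frac1t I_{n+1}(\rho t)=\frac{I_{n+1}(\rho t)}{I_n(\rho t)}\cdot\frac{I_n(\rho t)}{t}.
\]
Then bound the ratio from below by $\frac{1}{2+\sqrt2}\,\rho t\,(1+n^2+\rho^2t^2)^{-1/2}$ and from above by $2\rho t\,(1+n^2+\rho^2t^2)^{-1/2}$. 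This cancels the factor $1/t$ and produces $\rho$ times an extra factor $(1+n^2+\rho^2t^2)^{-1/2}$.

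Next, I apply Lemma~\ref{Lemma:Bessel-uniform-estimate} to $I_n(\rho t)$ with $z=\rho t$. This gives two-sided bounds of the form $c_B$ (respectively $C_B$) times
\[
\exp\Bigl\{\sqrt{n^2+\rho^2t^2}+n\log\bigl(\rho t/(n+\sqrt{n^2+\rho^2t^2})\bigr)\Bigr\}\,(1+n^2+\rho^2t^2)^{-1/4}.
\]
The exponent here is exactly the one in the statement, with index $n$, not $n+1$; this is precisely why one factors through $I_n$ rather than applying the lemma at index $n+1$. For $n=0$ the lemma's convention (the logarithmic term is zero) matches the statement, since $0\cdot\log(\cdot)=0$.

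Multiplying the two factors, the powers combine as $(1+n^2+\rho^2t^2)^{-1/2-1/4}=(1+n^2+\rho^2t^2)^{-3/4}$. The constants come out as $\rho c_B/(2+\sqrt2)$ for the lower bound and $2\rho C_B$ for the upper bound, as claimed.

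There is no real obstacle, since everything is bookkeeping. The only point requiring care is making sure the ratio lemma is used at index $n$, so that the numerator is $I_{n+1}$ and the denominator $I_n$, and that both lemmas hold uniformly for all $n\in\NN_0$ and $z>0$, including $n=0$ and small $t$. Both lemmas are stated in exactly that generality, so the corollary follows for all $t>0$.
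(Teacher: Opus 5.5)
Your proposal is correct and matches the paper's proof: with $z=\rho t$, write $\frac1t I_{n+1}(z)=\frac{\rho}{z}\frac{I_{n+1}(z)}{I_n(z)}I_n(z)$. Then bound the quotient by Lemma~\ref{Lemma:Bessel-ratio-uniform} and $I_n(z)$ by Lemma~\ref{Lemma:Bessel-uniform-estimate} at index $n$, which gives the exponent $(1+n^2+z^2)^{-3/4}$ and the constants $\rho c_B/(2+\sqrt2)$ and $2\rho C_B$. Your opening sentence says the uniform lemma is applied at index $n+1$, but the argument you actually carry out correctly uses index $n$.
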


\begin{proof}
Set
\[
z:=\rho t.
\]
Then
\[
\frac{1}{t}I_{n+1}(z)
=
\frac{\rho}{z}
\frac{I_{n+1}(z)}{I_n(z)}
I_n(z).
\]
By Lemma~\ref{Lemma:Bessel-ratio-uniform},
\[
\frac{\rho}{2+\sqrt2}
\frac{I_n(z)}
{(1+n^2+z^2)^{1/2}}
\leq
\frac{1}{t}I_{n+1}(z)
\leq
2\rho
\frac{I_n(z)}
{(1+n^2+z^2)^{1/2}}.
\]
Applying Lemma~\ref{Lemma:Bessel-uniform-estimate} to \(I_n(z)\), we
obtain
\[
\frac{\rho c_B}{2+\sqrt2}
\frac{
\exp\left\{
\sqrt{n^2+z^2}
+
n\log\left(
\frac{z}{n+\sqrt{n^2+z^2}}
\right)
\right\}
}{
(1+n^2+z^2)^{3/4}
}
\leq
\frac1t I_{n+1}(z),
\]
and
\[
\frac1t I_{n+1}(z)
\leq
2\rho C_B
\frac{
\exp\left\{
\sqrt{n^2+z^2}
+
n\log\left(
\frac{z}{n+\sqrt{n^2+z^2}}
\right)
\right\}
}{
(1+n^2+z^2)^{3/4}
}.
\]
Substituting \(z=\rho t\) gives the result.
\end{proof}

The following result provides the Taylor expansion of the function
\(\Phi\) near the critical speed \(c_\ast\).

\begin{lemma}\label{lem:Phi-expansion-critical}
Let
\[
\Phi(c)
=
\beta-\alpha(q+1)
+\sqrt{c^2+4q\alpha^2}
+
c\log\left(
\frac{2\alpha}
{c+\sqrt{c^2+4q\alpha^2}}
\right),
\qquad c\geq0,
\]
and let \(c_\ast>0\) be the critical speed. Set
\[
s_\ast
:=
\sqrt{c_\ast^2+4q\alpha^2}.
\]
If \(\xi=\xi(t)\) satisfies
\[
\xi=o(t)
\qquad\text{as }t\to\infty,
\]
then
\[
t\Phi\left(c_\ast+\frac{\xi}{t}\right)
=
-\lambda_\ast\xi
-\frac{\xi^2}{2s_\ast t}
+
O\left(\frac{|\xi|^3}{t^2}\right).
\]
In particular, if
\[
|\xi|=o(t^{2/3}),
\]
then
\[
t\Phi\left(c_\ast+\frac{\xi}{t}\right)
=
-\lambda_\ast\xi
-\frac{\xi^2}{2s_\ast t}
+o(1).
\]
\end{lemma}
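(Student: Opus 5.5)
The plan is a third-order Taylor expansion of \(\Phi\) at \(c_\ast\) with Lagrange remainder, applied with increment \(h:=\xi/t\) and multiplied by \(t\). Since \(\xi=o(t)\), we have \(h\to0\), so \(c_\ast+h\) lies in \([c_\ast/2,\,3c_\ast/2]\subset(0,\infty)\) for all large \(t\). Hence \(\Phi(c_\ast+h)\) is defined and the expansion takes place in a fixed compact neighborhood of \(c_\ast\).

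First I compute the derivatives of \(\Phi\) at \(c_\ast\).
\begin{itemize}
\item The value is \(\Phi(c_\ast)=0\), by definition of \(c_\ast\) as the unique positive zero of \(\Phi\).
\item The first derivative is \(\Phi'(c)=\log\bigl(2\alpha/(c+\sqrt{c^2+4q\alpha^2})\bigr)\), recorded in the remark following Corollary~\ref{cor:general-linear-kernel}. The proof of Proposition~\ref{prop:critical-speed} showed \(\lambda_\ast=-\Phi'(c_\ast)\), so \(\Phi'(c_\ast)=-\lambda_\ast\).
\item For the second derivative, differentiating \(\log\bigl(c+\sqrt{c^2+4q\alpha^2}\bigr)\) gives
\[
\frac{1+c/\sqrt{c^2+4q\alpha^2}}{c+\sqrt{c^2+4q\alpha^2}}=\frac{1}{\sqrt{c^2+4q\alpha^2}}.
\]
Therefore \(\Phi''(c)=-(c^2+4q\alpha^2)^{-1/2}\), and in particular \(\Phi''(c_\ast)=-1/s_\ast\).
\item The third derivative is \(\Phi'''(c)=c\,(c^2+4q\alpha^2)^{-3/2}\). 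It is bounded on \([0,\infty)\), for instance by \((4q\alpha^2)^{-1}\), using \(c\le (c^2+4q\alpha^2)^{1/2}\).
\end{itemize}

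Next, Taylor's theorem with Lagrange remainder gives
\[
\Phi(c_\ast+h)=-\lambda_\ast h-\frac{h^2}{2s_\ast}+R(h),\qquad |R(h)|\le \frac{\sup|\Phi'''|}{6}\,|h|^3 .
\]
The bound on \(R\) is uniform because \(\Phi'''\) is globally bounded. Multiplying by \(t\) and substituting \(h=\xi/t\) yields
\[
t\Phi\Bigl(c_\ast+\frac{\xi}{t}\Bigr)=-\lambda_\ast\xi-\frac{\xi^2}{2s_\ast t}+O\Bigl(\frac{|\xi|^3}{t^2}\Bigr),
\]
where the implied constant depends only on \(q\) and \(\alpha\).

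For the second assertion, if \(|\xi|=o(t^{2/3})\) then \(|\xi|^3/t^2=o(1)\), and the claim follows immediately.

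There is no real obstacle here. The only point needing care is to confirm that \(s_\ast\) as defined here, \(\sqrt{c_\ast^2+4q\alpha^2}\), agrees with the earlier \(s_\ast=a_\ast+b_\ast\). With \(\mu=\lambda_\ast-\tfrac12\log q\), the proof of Proposition~\ref{prop:critical-speed} gives \(c_\ast=2\alpha\sqrt q\sinh\mu\). Then \(\sqrt{c_\ast^2+4q\alpha^2}=2\alpha\sqrt q\cosh\mu=\alpha\bigl(e^{\lambda_\ast}+qe^{-\lambda_\ast}\bigr)\), so the two definitions coincide.
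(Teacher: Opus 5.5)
Your proposal is correct and follows the paper's route: compute \(\Phi'\), \(\Phi''=-(c^2+4q\alpha^2)^{-1/2}\) and \(\Phi'''\), then expand to third order at \(c_\ast\) using \(\Phi(c_\ast)=0\), \(\Phi'(c_\ast)=-\lambda_\ast\) and \(\Phi''(c_\ast)=-1/s_\ast\), and multiply by \(t\). Two of your details are welcome but minor refinements of steps the paper leaves implicit: the global bound \(|\Phi'''|\leq(4q\alpha^2)^{-1}\), and the check that \(\sqrt{c_\ast^2+4q\alpha^2}=a_\ast+b_\ast\), which confirms this \(s_\ast\) agrees with the earlier one.
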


\begin{proof}
A direct differentiation gives
\[
\Phi'(c)
=
\log\left(
\frac{2\alpha}
{c+\sqrt{c^2+4q\alpha^2}}
\right),
\]
and therefore
\[
\begin{aligned}
\Phi''(c)
&=
-\frac{d}{dc}
\log\left(
c+\sqrt{c^2+4q\alpha^2}
\right)
\\
&=
-\frac{
1+\dfrac{c}{\sqrt{c^2+4q\alpha^2}}
}{
c+\sqrt{c^2+4q\alpha^2}
}
\\
&=
-\frac{1}{\sqrt{c^2+4q\alpha^2}}.
\end{aligned}
\]
In particular,
\[
\Phi''(c_\ast)
=
-\frac1{s_\ast}.
\]
Moreover,
\[
\Phi'''(c)
=
\frac{c}
{\left(c^2+4q\alpha^2\right)^{3/2}},
\]
so \(\Phi'''\) is bounded in a neighborhood of \(c_\ast\).

Since \(\xi=o(t)\), one has
\[
c_\ast+\frac{\xi}{t}\longrightarrow c_\ast.
\]
Taylor's formula at \(c_\ast\) therefore yields
\[
\Phi\left(c_\ast+\frac{\xi}{t}\right)
=
\Phi(c_\ast)
+
\Phi'(c_\ast)\frac{\xi}{t}
+
\frac12\Phi''(c_\ast)\frac{\xi^2}{t^2}
+
O\left(\frac{|\xi|^3}{t^3}\right).
\]
By the definition of the critical pair,
\[
\Phi(c_\ast)=0
\]
and
\[
\lambda_\ast
=
-\Phi'(c_\ast)
=
\log\left(
\frac{
c_\ast+\sqrt{c_\ast^2+4q\alpha^2}
}{
2\alpha
}
\right).
\]
Hence
\[
\Phi'(c_\ast)=-\lambda_\ast,
\qquad
\Phi''(c_\ast)=-\frac1{s_\ast}.
\]
Multiplying the Taylor expansion by \(t\), we obtain
\[
t\Phi\left(c_\ast+\frac{\xi}{t}\right)
=
-\lambda_\ast\xi
-\frac{\xi^2}{2s_\ast t}
+
O\left(\frac{|\xi|^3}{t^2}\right).
\]

Finally, if
\[
|\xi|=o(t^{2/3}),
\]
then
\[
\frac{|\xi|^3}{t^2}=o(1),
\]
which gives the last assertion.
\end{proof}

\section{Basic Properties of the Nonlinear Problem}\label{Appendix:Auxiliary:Results:2}
\begin{lemma}\label{lem:nonlinear-wellposedness-radiality}
Let \(u_0\in\ell^\infty(\TT_{q+1})\) satisfy
\[
0\leq u_0(x)\leq1,
\qquad x\in\TT_{q+1}.
\]
Then problem~\eqref{Eq:Fisher-KPP-tree:nonlinear} admits a unique global
solution
\[
u\in C^1\bigl([0,\infty);\ell^\infty(\TT_{q+1})\bigr).
\]
Moreover, the comparison principle holds and
\[
0\leq u(t,x)\leq1,
\qquad
t\geq0,\quad x\in\TT_{q+1}.
\]
If, in addition, \(u_0\) is radial with respect to the root
\(o\in\TT_{q+1}\), then the solution remains radial for all \(t\geq0\).
In this case, we may write
\[
u(t,x)=u_n(t),
\qquad
n=d(o,x).
\]
\end{lemma}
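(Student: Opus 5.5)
The plan is to treat the system as a finite-dimensional-in-spirit ODE on the Banach space \(\ell^\infty(\TT_{q+1})\). The Laplacian \(\alpha\Delta_{\TT_{q+1}}\) is a bounded operator there, with norm at most \(2\alpha(q+1)\). I will replace the logistic term by the globally Lipschitz extension \(\widetilde f\) introduced in the proof of Lemma~\ref{lem:critical-front-new}. Then the right-hand side \(F(u)=\alpha\Delta_{\TT_{q+1}}u+\widetilde f(u)\) is globally Lipschitz on \(\ell^\infty\), and the Picard--Lindelöf theorem in Banach spaces gives a unique global solution \(u\in C^1([0,\infty);\ell^\infty)\) of the modified problem. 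Once I show \(0\le u\le1\), the modified and original problems coincide. Uniqueness for the original problem follows as well, since any solution with values in \([0,1]\) solves the modified one, and uniqueness among bounded solutions follows from local Lipschitz continuity of \(u\mapsto u(1-u)\) on bounded sets.

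For the comparison principle, let \(\underline u,\overline u\) be bounded sub- and supersolutions with \(\underline u(0)\le\overline u(0)\), and set \(w=\underline u-\overline u\). Then \(w\) satisfies
\[
\partial_t w\le \alpha\sum_{y\sim x}(w(y)-w(x))+c(t,x)w,
\]
where \(c\) is bounded because of the Lipschitz bound. Writing \(\alpha\Delta=B-\alpha(q+1)I\) with \(B\ge0\) (as in Lemma~\ref{Lemma:radial-linear-bound}), I would take \(K>\alpha(q+1)+\|c\|_\infty\) and consider \(\tilde w=e^{-Kt}w\). The main technical point is that \(\TT_{q+1}\) is infinite, so a positive supremum need not be attained. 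I would handle this in one of two ways:
\begin{itemize}
\item Use the Duhamel formulation. Put \(w_+=\max(w,0)\) and show \(\|w_+(t)\|_\infty\le \int_0^t C\|w_+(s)\|_\infty\,ds\) using positivity of \(e^{tB}\). This works because the variation-of-constants formula \(w(t)\le e^{t(A-M)}w(0)+\int_0^t e^{(t-s)(A-M)}[(c+M)w_+](s)\,ds\) is valid for positive semigroups when \(M\ge\|c\|_\infty\).
\item Alternatively, perturb by \(\varepsilon e^{Kt}\) and use approximate maxima.
\end{itemize}
Gronwall's inequality then yields \(w_+\equiv0\). I expect this step to be the main obstacle, specifically justifying the Duhamel inequality with only a sign condition. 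I would first try the positive-semigroup route, since \(A\) is bounded and \(e^{t(A+M)}\ge0\) follows from the power series exactly as in Lemma~\ref{Lemma:radial-linear-bound}.

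The bounds \(0\le u\le1\) then follow by comparing with the constant solutions \(0\) and \(1\) of the modified equation, using \(0\le u_0\le1\).

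For radiality, let \(\varphi\) be any automorphism of \(\TT_{q+1}\) fixing \(o\). Then \(u\circ\varphi\) solves the same equation, because \(\Delta_{\TT_{q+1}}\) commutes with automorphisms and the reaction term is local. It also has the same initial datum, since \(u_0\) is radial. By uniqueness, \(u(t,\varphi(x))=u(t,x)\) for every such \(\varphi\). The stabilizer of \(o\) acts transitively on each sphere \(S_n(o)\), so \(u(t,\cdot)\) is constant on spheres, and we may write \(u(t,x)=u_n(t)\) with \(n=d(o,x)\).
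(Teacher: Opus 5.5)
Your proposal is correct and follows the paper's proof in all essentials: the globally Lipschitz extension $\widetilde f$ with the Banach-space existence--uniqueness theorem on $\ell^\infty(\TT_{q+1})$, comparison with the constant solutions $0$ and $1$, and radiality from uniqueness together with transitivity of the stabilizer of $o$ on spheres. The only difference is in the comparison step. The paper follows $M(t)=\sup_x w(t,x)^+$ along a maximizing sequence to obtain $D^+M\le LM$. Your positive-semigroup Duhamel inequality avoids the non-attained supremum entirely, and because it is stated for sub- and supersolutions rather than two exact solutions, it also covers the comparisons with the linearized equation that the paper uses later.
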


\begin{proof}
We regard \eqref{Eq:Fisher-KPP-tree:nonlinear} as an ordinary
differential equation on the Banach space
\(\ell^\infty(\TT_{q+1})\). Since every vertex of
\(\TT_{q+1}\) has degree \(q+1\),
\[
\|\Delta_{\TT_{q+1}}z\|_{\ell^\infty}
\leq
2(q+1)\|z\|_{\ell^\infty},
\]
so that \(\Delta_{\TT_{q+1}}\) is a bounded operator on
\(\ell^\infty(\TT_{q+1})\).

Let \(\widetilde f:\mathbb R\to\mathbb R\) be a globally Lipschitz
function such that
\[
\widetilde f(s)=\beta s(1-s),
\qquad 0\leq s\leq1.
\]
Then
\[
\mathcal A(z)
:=
\alpha\Delta_{\TT_{q+1}}z+\widetilde f(z)
\]
defines a globally Lipschitz map on
\(\ell^\infty(\TT_{q+1})\). Hence, by the standard existence and
uniqueness theorem for ordinary differential equations in Banach spaces
\cite[Chapter~1]{Deimling-1977-Springer}, the extended problem
\[
u'(t)=\mathcal A(u(t)),
\qquad
u(0)=u_0,
\]
admits a unique global solution
\[
u\in C^1\bigl([0,\infty);\ell^\infty(\TT_{q+1})\bigr).
\]
We next establish the comparison principle. Let \(u\) and \(v\) be two
solutions of the extended equation such that
\[
u(0,\cdot)\leq v(0,\cdot),
\]
and set \(w:=u-v\). Then, for every \(x\in\TT_{q+1}\),
\[
\partial_t w(t,x)
=
\alpha\sum_{y\sim x}w(t,y)
-\alpha(q+1)w(t,x)
+\widetilde f(u(t,x))-\widetilde f(v(t,x)).
\]

Let \(L>0\) be a Lipschitz constant for \(\widetilde f\), and define
\[
M(t):=\sup_{x\in\TT_{q+1}}w(t,x)^+,
\qquad
w(t,x)^+:=\max\{w(t,x),0\}.
\]
For each fixed \(t\) such that \(M(t)>0\), choose a sequence
\((x_k)\subset\TT_{q+1}\) satisfying
\[
w(t,x_k)>0,
\qquad
w(t,x_k)\to M(t).
\]
Since
\[
w(t,y)\leq M(t),
\qquad y\in\TT_{q+1},
\]
we obtain
\[
\alpha\sum_{y\sim x_k}w(t,y)
-\alpha(q+1)w(t,x_k)
\leq
\alpha(q+1)\bigl(M(t)-w(t,x_k)\bigr).
\]
Moreover, since \(w(t,x_k)>0\), the Lipschitz continuity of
\(\widetilde f\) gives
\[
\widetilde f(u(t,x_k))-\widetilde f(v(t,x_k))
\leq
Lw(t,x_k).
\]
Hence
\[
\partial_t w(t,x_k)
\leq
\alpha(q+1)\bigl(M(t)-w(t,x_k)\bigr)
+
Lw(t,x_k).
\]
Passing to the limit along the maximizing sequence yields
\[
D^+M(t)\leq LM(t),
\]
where \(D^+\) denotes the upper Dini derivative. Since
\(u(0,\cdot)\leq v(0,\cdot)\), we have \(M(0)=0\), and Gronwall's
inequality gives
\[
M(t)=0,
\qquad t\geq0.
\]
Therefore,
\[
w(t,x)\leq0,
\qquad
t\geq0,\quad x\in\TT_{q+1},
\]
and hence
\[
u(t,x)\leq v(t,x),
\qquad
t\geq0,\quad x\in\TT_{q+1}.
\]

We apply this principle to the constant functions
\[
\underline u(t,x)\equiv0,
\qquad
\overline u(t,x)\equiv1.
\]
Since
\[
\widetilde f(0)=\widetilde f(1)=0,
\]
and the graph Laplacian annihilates constant functions,
\(\underline u\) and \(\overline u\) are solutions of the extended
equation. Thus, from \(0\leq u_0\leq1\), comparison gives
\[
0\leq u(t,x)\leq1,
\qquad
t\geq0,\quad x\in\TT_{q+1}.
\]
Consequently, the solution remains in the interval on which
\(\widetilde f(s)=\beta s(1-s)\), and hence it is the unique global
solution of \eqref{Eq:Fisher-KPP-tree:nonlinear}.

Finally, suppose that \(u_0\) is radial with respect to the root \(o\).
Let \(\varphi\) be an automorphism of \(\TT_{q+1}\) fixing \(o\), and
define
\[
u^\varphi(t,x):=u(t,\varphi(x)).
\]
Since graph automorphisms commute with
\(\Delta_{\TT_{q+1}}\), the function \(u^\varphi\) satisfies the same
equation as \(u\). Moreover, by the radiality of \(u_0\),
\[
u^\varphi(0,x)
=
u_0(\varphi(x))
=
u_0(x).
\]
Uniqueness therefore implies
\[
u^\varphi=u.
\]
Since the stabilizer of \(o\) acts transitively on every geodesic sphere
centered at \(o\), \(u(t,\cdot)\) is constant on each such sphere.
Therefore,
\[
u(t,x)=u_n(t),
\qquad
n=d(o,x).
\]
\end{proof}

\begin{lemma}\label{lem:tilted-Green-new}
For \(t>0\) and \(n\in\ZZ\),
\begin{equation}\label{eq:tilted-Green-explicit-new}
\mathcal G_n(t)
=e^{-s_\ast t}
\left(\frac{a_\ast}{b_\ast}\right)^{n/2}
I_{|n|}(2\sqrt{a_\ast b_\ast}\,t).
\end{equation}
Moreover,
\begin{equation}\label{eq:tilted-moments-new}
\sum_{n\in\ZZ}\mathcal G_n(t)=1,
\qquad
\sum_{n\in\ZZ}n\mathcal G_n(t)=c_\ast t,
\qquad
\sum_{n\in\ZZ}(n-c_\ast t)^2\mathcal G_n(t)=s_\ast t.
\end{equation}
\end{lemma}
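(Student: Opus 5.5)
The explicit formula \eqref{eq:tilted-Green-explicit-new} can be obtained by conjugating the operator $\mathscr L_\ast$ to a symmetric one. Set $A_\ast:=\sqrt{a_\ast b_\ast}$ and $\rho_\ast:=\tfrac12\log(a_\ast/b_\ast)$, so that $a_\ast=A_\ast e^{\rho_\ast}$ and $b_\ast=A_\ast e^{-\rho_\ast}$. Write $w_n(t)=(a_\ast/b_\ast)^{n/2}\phi_n(t)=e^{\rho_\ast n}\phi_n(t)$. Substituting into $\partial_t w_n=a_\ast(w_{n-1}-w_n)+b_\ast(w_{n+1}-w_n)$ gives
\[
\partial_t\phi_n=A_\ast(\phi_{n-1}+\phi_{n+1})-s_\ast\phi_n,
\qquad \phi_n(0)=\delta_{0n}.
\]
The factor $e^{\rho_\ast n}$ from the shift $n\mapsto n-1$ combines with $a_\ast$ to give $A_\ast$, and similarly for $b_\ast$. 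This is the symmetric lattice heat equation. By the $q=1$ remark following Proposition~\ref{prop:fundamental-solution-homogeneous}, or by the classical generating function $e^{\frac z2(\zeta+\zeta^{-1})}=\sum_{n\in\ZZ}I_{n}(z)\zeta^n$, its solution is $\phi_n(t)=e^{-s_\ast t}I_{|n|}(2A_\ast t)$. Uniqueness holds among solutions bounded in $\ell^1(\ZZ)$, since $\mathscr L_\ast$ is a bounded operator on $\ell^1$. Undoing the conjugation gives \eqref{eq:tilted-Green-explicit-new}. Nonnegativity is clear from the formula.

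For the moments I will use the generating function rather than summing Bessel series directly. Define $M(t,\zeta):=\sum_n\mathcal G_n(t)\zeta^n$. From the identity above, with $\zeta$ replaced by $\zeta\sqrt{a_\ast/b_\ast}$,
\[
M(t,\zeta)=e^{-s_\ast t}\exp\bigl(A_\ast t(e^{\rho_\ast}\zeta+e^{-\rho_\ast}\zeta^{-1})\bigr)=\exp\bigl(t(a_\ast\zeta+b_\ast\zeta^{-1}-s_\ast)\bigr).
\]
Equivalently, one can derive $\partial_t M=(a_\ast\zeta+b_\ast\zeta^{-1}-a_\ast-b_\ast)M$ directly from the equation. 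The series converges absolutely for all $\zeta>0$ because $I_{|n|}(z)\le (z/2)^{|n|}e^{z}/|n|!$. This justifies differentiating term by term in $\zeta$ at $\zeta=1$.

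Let $\kappa(\zeta):=a_\ast\zeta+b_\ast\zeta^{-1}-s_\ast$. Then $M(t,1)=1$, which is the mass identity. Next, $\partial_\zeta M|_{\zeta=1}=t\kappa'(1)=t(a_\ast-b_\ast)=c_\ast t$, using the earlier remark that $a_\ast-b_\ast=c_\ast$. For the variance, take $\zeta=e^{\theta}$ and consider the cumulant function $t\kappa(e^\theta)=t(a_\ast e^\theta+b_\ast e^{-\theta}-s_\ast)$. Its second derivative at $\theta=0$ is $t(a_\ast+b_\ast)=s_\ast t$, and this equals $\sum(n-c_\ast t)^2\mathcal G_n$ because the total mass is $1$.

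The only step needing care is justifying uniqueness and the term-by-term differentiation. Both follow from the super-exponential decay in $|n|$ of the Bessel terms, which gives locally uniform convergence of all the relevant series.
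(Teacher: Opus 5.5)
Your proof is correct. For the explicit formula you take a different route from the paper; for the moments you take essentially the same one.

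For the kernel, the paper works through the Laplace transform in time. It solves the resolvent recursion $b_\ast\widehat{\mathcal G}_{n+1}-(\lambda+s_\ast)\widehat{\mathcal G}_n+a_\ast\widehat{\mathcal G}_{n-1}=0$ for $n\neq0$, using the root $r_-$ for $n\geq0$ and $r_+$ for $n\leq0$. It fixes the amplitude $C(\lambda)=\bigl((\lambda+s_\ast)^2-4a_\ast b_\ast\bigr)^{-1/2}$ from the equation at $n=0$, and then inverts with the tabulated Laplace transform of $I_m$. This mirrors the tree computation in Proposition~\ref{prop:fundamental-solution-homogeneous} and needs nothing beyond that table formula.

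You instead symmetrize with the weight $e^{\rho_\ast n}=(a_\ast/b_\ast)^{n/2}$, which reduces everything to the classical symmetric lattice kernel. Your reduced equation has effective reaction coefficient $2A_\ast-s_\ast\le0$, which is harmless since it only contributes an exponential factor. This route is shorter and can be checked directly from $I_n'=\tfrac12(I_{n-1}+I_{n+1})$. It also explains the prefactor $(a_\ast/b_\ast)^{n/2}$: it is exactly the weight that symmetrizes $\mathscr L_\ast=A_\ast\mathscr L_{\rho_\ast}$, the same $A_\ast,\rho_\ast$ the paper introduces in the proof of Proposition~\ref{prop:Green-refined-asymptotics-new}. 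The cost is that you import the symmetric kernel, or the Bessel generating function, as known.

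For the moments, both arguments rest on $F(t,z)=\exp\{t(a_\ast z+b_\ast z^{-1}-s_\ast)\}$. The paper obtains it from the ODE and computes $(z\partial_z)^2F$ at $z=1$. You read the variance off the cumulant function, which is equivalent because the total mass is one. Your bound $I_{|n|}(z)\le (z/2)^{|n|}e^{z}/|n|!$ also supplies the justification for term-by-term differentiation and for uniqueness in $\ell^1$, both of which the paper uses without comment.
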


\begin{proof}
We first derive the explicit representation of the Green function by taking
the Laplace transform in time. For \(\lambda>0\), set
\[
\widehat{\mathcal G}_n(\lambda)
:=
\int_0^\infty e^{-\lambda t}\mathcal G_n(t)\,dt.
\]
Since
\[
(\mathscr L_\ast w)_n
=
a_\ast w_{n-1}
+
b_\ast w_{n+1}
-
s_\ast w_n,
\qquad
s_\ast=a_\ast+b_\ast,
\]
taking the Laplace transform in
\eqref{eq:critical-linear-whole-line-new} gives
\[
(\lambda+s_\ast)\widehat{\mathcal G}_n(\lambda)
-\delta_{0n}
=
a_\ast\widehat{\mathcal G}_{n-1}(\lambda)
+
b_\ast\widehat{\mathcal G}_{n+1}(\lambda).
\]
Hence, for \(n\neq0\),
\begin{equation}\label{eq:resolvent-recursion-tilted}
b_\ast\widehat{\mathcal G}_{n+1}(\lambda)
-
(\lambda+s_\ast)\widehat{\mathcal G}_n(\lambda)
+
a_\ast\widehat{\mathcal G}_{n-1}(\lambda)
=
0.
\end{equation}
The corresponding characteristic equation is
\[
b_\ast r^2-(\lambda+s_\ast)r+a_\ast=0,
\]
whose roots are
\[
r_\pm(\lambda)
=
\frac{
\lambda+s_\ast
\pm
\sqrt{(\lambda+s_\ast)^2-4a_\ast b_\ast}
}{
2b_\ast
}.
\]
Since
\[
0<r_-(\lambda)<1<r_+(\lambda),
\]
the decay of the resolvent as \(|n|\to\infty\) implies
\[
\widehat{\mathcal G}_n(\lambda)
=
C(\lambda)
\begin{cases}
r_-(\lambda)^n, & n\geq0,\\[1mm]
r_+(\lambda)^n, & n\leq0.
\end{cases}
\]
Using the resolvent equation at \(n=0\), together with the relations between
\(r_-\) and \(r_+\), we obtain
\[
C(\lambda)
=
\frac{1}{
\sqrt{(\lambda+s_\ast)^2-4a_\ast b_\ast}
}.
\]
Therefore,
\[
\widehat{\mathcal G}_n(\lambda)
=
\frac{1}{
\sqrt{(\lambda+s_\ast)^2-4a_\ast b_\ast}
}
\left(\frac{a_\ast}{b_\ast}\right)^{n/2}
\left(
\frac{
\lambda+s_\ast
-
\sqrt{(\lambda+s_\ast)^2-4a_\ast b_\ast}
}{
2\sqrt{a_\ast b_\ast}
}
\right)^{|n|}.
\]
We now use the classical Laplace transform identity
\[
\int_0^\infty e^{-\sigma t}I_m(\rho t)\,dt
=
\frac{1}{\sqrt{\sigma^2-\rho^2}}
\left(
\frac{\sigma-\sqrt{\sigma^2-\rho^2}}{\rho}
\right)^m,
\qquad \sigma>\rho,
\]
with
\[
\sigma=\lambda+s_\ast,
\qquad
\rho=2\sqrt{a_\ast b_\ast}.
\]
By uniqueness of the Laplace transform, it follows that
\[
\mathcal G_n(t)
=
e^{-s_\ast t}
\left(\frac{a_\ast}{b_\ast}\right)^{n/2}
I_{|n|}(2\sqrt{a_\ast b_\ast}\,t),
\]
which proves \eqref{eq:tilted-Green-explicit-new}.

We next obtain the moment identities from the generating function. Define
\[
F(t,z)
:=
\sum_{n\in\ZZ}\mathcal G_n(t)z^n,
\qquad z>0.
\]
Using the equation satisfied by \(\mathcal G_n\), we find
\[
\begin{aligned}
\partial_tF(t,z)
&=
a_\ast\sum_{n\in\ZZ}\mathcal G_{n-1}(t)z^n
+
b_\ast\sum_{n\in\ZZ}\mathcal G_{n+1}(t)z^n
-
s_\ast F(t,z)\\
&=
\left(
a_\ast z+b_\ast z^{-1}-s_\ast
\right)F(t,z).
\end{aligned}
\]
Since
\[
F(0,z)=1,
\]
we obtain
\begin{equation}\label{eq:tilted-generating-function}
F(t,z)
=
\exp\left\{
t\left(
a_\ast z+b_\ast z^{-1}-s_\ast
\right)
\right\}.
\end{equation}

Evaluating at \(z=1\) and using \(s_\ast=a_\ast+b_\ast\) gives
\[
F(t,1)=1,
\]
and therefore
\[
\sum_{n\in\ZZ}\mathcal G_n(t)=1.
\]

Introduce the operator
\[
D:=z\partial_z.
\]
Then
\[
DF(t,z)
=
\sum_{n\in\ZZ}n\mathcal G_n(t)z^n.
\]
From \eqref{eq:tilted-generating-function},
\[
DF(t,z)
=
t\left(a_\ast z-b_\ast z^{-1}\right)F(t,z).
\]
Hence, at \(z=1\),
\[
\sum_{n\in\ZZ}n\mathcal G_n(t)
=
(a_\ast-b_\ast)t.
\]
Since
\[
c_\ast=a_\ast-b_\ast,
\]
we obtain
\[
\sum_{n\in\ZZ}n\mathcal G_n(t)=c_\ast t.
\]

Similarly,
\[
D^2F(t,z)
=
\sum_{n\in\ZZ}n^2\mathcal G_n(t)z^n.
\]
A direct differentiation yields
\[
D^2F(t,z)
=
\left[
t\left(a_\ast z+b_\ast z^{-1}\right)
+
t^2\left(a_\ast z-b_\ast z^{-1}\right)^2
\right]F(t,z).
\]
Evaluating again at \(z=1\), we find
\[
\sum_{n\in\ZZ}n^2\mathcal G_n(t)
=
s_\ast t+c_\ast^2t^2.
\]
Consequently,
\[
\begin{aligned}
\sum_{n\in\ZZ}(n-c_\ast t)^2\mathcal G_n(t)
&=
\sum_{n\in\ZZ}n^2\mathcal G_n(t)
-2c_\ast t\sum_{n\in\ZZ}n\mathcal G_n(t)
+c_\ast^2t^2\sum_{n\in\ZZ}\mathcal G_n(t).\\
&=
s_\ast t.
\end{aligned}
\]
This proves \eqref{eq:tilted-moments-new}.
\end{proof}

\section{Moving Comparison and Leading-Edge Barriers}
\label{Appendix:Nonlinear:Barriers}

In this appendix we collect the comparison and barrier estimates used in
Proposition~\ref{prop:leading-edge-new}. The construction follows closely
\cite[Sections~5.1.1--5.1.2]{Besse-Faye-Roquejoffre-Zhang-2023-TAMS}.
We only indicate the modifications required by the present problem.

Throughout this appendix, we write
\[
y_n(t):=n-c_\ast t,
\qquad
T:=t+1,
\]
and recall that
\[
(\mathscr L_\ast z)_n
=
a_\ast(z_{n-1}-z_n)
+
b_\ast(z_{n+1}-z_n),
\]
where
\[
a_\ast-b_\ast=c_\ast,
\qquad
a_\ast+b_\ast=s_\ast.
\]
We also set
\[
A_\ast:=\sqrt{a_\ast b_\ast}=\alpha\sqrt q,
\qquad
\rho_\ast
:=
\frac12\log\frac{a_\ast}{b_\ast}
=
\lambda_\ast-\frac12\log q.
\]
Then
\[
a_\ast=A_\ast e^{\rho_\ast},
\qquad
b_\ast=A_\ast e^{-\rho_\ast},
\]
and therefore
\[
\mathscr L_\ast
=
A_\ast\mathscr L_{\rho_\ast},
\]
where
\[
(\mathscr L_\rho z)_n
:=
e^\rho(z_{n-1}-z_n)
+
e^{-\rho}(z_{n+1}-z_n).
\]

For barriers which may take negative values, we extend the logistic
nonlinearity by
\[
\widetilde f(s)
=
\begin{cases}
\beta s, & s\leq0,\\[1mm]
\beta s(1-s), & 0\leq s\leq1,\\[1mm]
0, & s\geq1.
\end{cases}
\]
Accordingly, we define
\begin{equation}\label{eq:Ndef-new}
\mathcal N_n(t;s)
:=
\beta s
-
e^{\lambda_\ast y_n(t)}
\widetilde f\left(
e^{-\lambda_\ast y_n(t)}s
\right).
\end{equation}
Then
\begin{equation}\label{eq:N-properties-new}
\mathcal N_n(t;s)\geq0
\quad\text{for }s\geq0,
\qquad
\mathcal N_n(t;s)=0
\quad\text{for }s\leq0,
\end{equation}
while
\begin{equation}\label{eq:N-quadratic-new}
\mathcal N_n(t;s)
=
\beta e^{-\lambda_\ast y_n(t)}s^2,
\end{equation}
whenever
\[
0\leq
e^{-\lambda_\ast y_n(t)}s
\leq1.
\]
Since \(0\leq u_n(t)\leq1\), the transformed solution satisfies
\begin{equation}\label{eq:v-extended-new}
\partial_t v_n
-
\mathscr L_\ast v_n
+
\mathcal N_n(t;v_n)
=
0,
\end{equation}
at every bulk index \(n\geq1\).

\subsection*{Moving comparison}

We first record the comparison principle used below.

\begin{lemma}[Comparison on a moving half-line]
\label{lem:moving-half-line-comparison-new}
Let \(t_0<T_1\), let
\(\zeta:[t_0,T_1]\to\RR\) be piecewise \(C^1\), and define
\[
\Omega_\zeta
:=
\left\{
(t,n):
t_0<t\leq T_1,\quad
n\in\ZZ,\quad
n\geq\zeta(t)
\right\}.
\]
Suppose that \(z\) and \(Z\) are bounded, continuous in time, and
piecewise \(C^1\), and satisfy
\[
\partial_tz_n
-
\mathscr L_\ast z_n
+
\mathcal N_n(t;z_n)
\leq
\partial_tZ_n
-
\mathscr L_\ast Z_n
+
\mathcal N_n(t;Z_n)
\]
at every interior point of \(\Omega_\zeta\). Assume moreover that
\[
z_n(t_0)\leq Z_n(t_0),
\]
on the initial section and that
\[
z_n(t)\leq Z_n(t),
\]
whenever
\[
\zeta(t)-1\leq n<\zeta(t).
\]
Then
\[
z_n(t)\leq Z_n(t)
\qquad
\text{for every }(t,n)\in\Omega_\zeta.
\]
\end{lemma}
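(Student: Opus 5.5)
We argue with a standard discrete maximum principle, applied to the difference \(w_n(t):=z_n(t)-Z_n(t)\) after an exponential time weight. The structural facts we rely on are:
\[
a_\ast,\; b_\ast>0,
\]
so that \(-\mathscr L_\ast\) has the correct sign at a maximum, and \(s\mapsto\mathcal N_n(t;s)\) is globally Lipschitz in \(s\), uniformly in \((t,n)\). The Lipschitz property holds because \(\widetilde f\) is globally Lipschitz with constant \(L\) (one may take \(L=\beta\)). Hence
\[
\bigl|\mathcal N_n(t;s)-\mathcal N_n(t;\sigma)\bigr|
\leq
\beta|s-\sigma|
+e^{\lambda_\ast y_n(t)}\bigl|\widetilde f(e^{-\lambda_\ast y_n(t)}s)-\widetilde f(e^{-\lambda_\ast y_n(t)}\sigma)\bigr|
\leq 2\beta|s-\sigma|.
\]
The constant is therefore \(K:=2\beta\), independent of \(n\) and \(t\); the exponential factors cancel exactly.

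Subtracting the two differential inequalities at an interior point gives
\[
\partial_t w_n\leq \mathscr L_\ast w_n+K|w_n|.
\]
Set \(W_n(t):=e^{-(K+1)(t-t_0)}w_n(t)\). At interior points where \(W_n>0\), this yields
\[
\partial_tW_n\leq \mathscr L_\ast W_n-W_n.
\]
Define
\[
M(t):=\sup_{n\geq\zeta(t)-1}W_n(t)^+,
\]
which is finite by boundedness. Suppose \(M(\tau)>0\) for some \(\tau\in(t_0,T_1]\), and let \(\tau^\ast\) be the first time at which \(M\) reaches a prescribed positive value \(\varepsilon\). At that time, the boundary layer \(\zeta-1\leq n<\zeta\) and the initial section contribute nonpositive values by hypothesis. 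Thus \(W^+\) near level \(\varepsilon\) can only be approached at interior points \(n\geq\zeta(\tau^\ast)\).

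Because the domain is unbounded, the supremum need not be attained. We handle this exactly as in the proof of Lemma~\ref{lem:nonlinear-wellposedness-radiality}: take a maximizing sequence \(n_k\) and use the bound
\[
\mathscr L_\ast W_{n_k}\leq s_\ast\bigl(M-W_{n_k}\bigr),
\]
which is valid since both neighbours \(n_k\pm1\) lie in \(\{n\geq\zeta-1\}\), where \(W\leq M\). This gives \(D^+M\leq -M<0\) whenever \(M>0\). Combined with \(M(t_0)\leq0\), it contradicts \(M\) ever becoming positive.

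The main obstacle is the moving boundary. The index set \(\{n\geq\zeta(t)\}\) changes with time, so \(M(t)\) is not obviously upper semicontinuous, and points can enter the interior through the boundary layer. This is exactly why the hypothesis is imposed on the width-one layer \(\zeta(t)-1\leq n<\zeta(t)\): any index newly entering \(\Omega_\zeta\) was, at slightly earlier times, in the layer where \(w\leq0\). Together with continuity in time of \(z\) and \(Z\), this shows that \(M\) can only increase through interior dynamics, where the Dini estimate applies. We therefore run the Gronwall/Dini argument on each interval where \(\zeta\) is \(C^1\), and concatenate across the finitely many break points using continuity.

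Finally, letting \(\varepsilon\downarrow0\) gives \(W\leq0\), hence \(z\leq Z\) on \(\Omega_\zeta\).
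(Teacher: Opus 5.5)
Your argument is sound in outline, but it takes a different route from the paper. The paper never works with the supremum over the unbounded index set. After the same exponential weight, it considers $D^\varepsilon_n:=\widetilde d_n-\varepsilon(n-n_0)$ with $n_0<\inf\zeta-1$, and uses $\mathscr L_\ast(n-n_0)=b_\ast-a_\ast=-c_\ast$ to get $\partial_tD^\varepsilon-\mathscr L_\ast D^\varepsilon\le-\widetilde d-\varepsilon c_\ast<0$ wherever $D^\varepsilon>0$. Since $\widetilde d$ is bounded, $D^\varepsilon<0$ beyond a finite threshold in $n$, so only finitely many sites matter. A first positive maximum is therefore attained at a genuine interior site, and the classical touching argument ($\partial_tD^\varepsilon\ge0$, $\mathscr L_\ast D^\varepsilon\le0$ there) gives the contradiction, after which $\varepsilon\downarrow0$. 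This needs only site-by-site regularity in time, which is exactly what the hypotheses provide.

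You instead transplant the Dini-derivative argument of Lemma~\ref{lem:nonlinear-wellposedness-radiality} to a moving domain. This avoids the penalization, and you treat the moving boundary explicitly, which the paper glosses over. Your global Lipschitz constant $2\beta$ is also cleaner than the paper's appeal to local Lipschitz continuity. In fact $\mathcal N_n(t;\cdot)$ is nondecreasing, so $\mathcal N_n(t;Z_n)-\mathcal N_n(t;z_n)\le0$ wherever $z_n>Z_n$.

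The step that does not go through as written is ``take a maximizing sequence at time $t$ and pass to the limit to get $D^+M\le-M$''. To bound $M(t+h)$ you must control the near-maximizers at time $t+h$, and these need not be near-maximizers at time $t$. So you need $W_n(t+h)=W_n(t)+h\,\partial_tW_n(t)+o(h)$ uniformly in $n$. In Lemma~\ref{lem:nonlinear-wellposedness-radiality} this holds because solutions are $C^1$ with values in $\ell^\infty$. Here $z$ and $Z$ are only continuous and piecewise $C^1$ site by site, and the lemma is applied to barriers.

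The repair is to integrate site by site rather than differentiate $M$:
\begin{itemize}
\item Take $n\ge\zeta(t')$ with $W_n(t')>0$, and let $\sigma$ be the last time in $[t_0,t']$ at which $W_n\le0$ or $n<\zeta$. Continuity of $W_n$ and $\zeta$, together with the hypothesis on the width-one layer, give $W_n(\sigma)\le0$.
\item On $(\sigma,t']$ both neighbours lie in $\{m\ge\zeta-1\}$, so $\partial_sW_n\le s_\ast M(s)-(s_\ast+1)W_n(s)$. Integrating gives $W_n(t')\le s_\ast\int_{t_0}^{t'}M(s)\,ds$.
\item Taking the supremum yields $M(t')\le s_\ast\int_{t_0}^{t'}M(s)\,ds$, and Gronwall gives $M\equiv0$.
\end{itemize}

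This also settles your semicontinuity worry. Sites enter and leave $\{n\ge\zeta(t)-1\}$ only through the layer where $W_n\le0$, so each map $t\mapsto W_n(t)^+\mathbf 1_{\{n\ge\zeta(t)-1\}}$ is continuous. Hence $M$ is bounded and lower semicontinuous, in particular measurable, which is all the integral argument needs. With this modification your proof is complete.
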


\begin{proof}
Set
\[
d_n(t):=z_n(t)-Z_n(t).
\]
Since \(z\) and \(Z\) are bounded and
\(\mathcal N_n(t;\cdot)\) is locally Lipschitz, there exists \(L>0\)
such that, whenever \(d_n(t)>0\),
\[
-\Bigl(
\mathcal N_n(t;z_n(t))
-
\mathcal N_n(t;Z_n(t))
\Bigr)
\leq
Ld_n(t).
\]
Hence
\[
\partial_td_n-\mathscr L_\ast d_n
\leq
Ld_n,
\]
at every point where \(d_n>0\).

Define
\[
\widetilde d_n(t)
:=
e^{-(L+1)(t-t_0)}d_n(t).
\]
Then, whenever \(\widetilde d_n(t)>0\),
\[
\partial_t\widetilde d_n
-
\mathscr L_\ast\widetilde d_n
\leq
-\widetilde d_n.
\]

Since the moving half-line is unbounded, the spatial supremum of
\(\widetilde d\) need not be attained. Choose
\[
n_0<
\inf_{t\in[t_0,T_1]}\zeta(t)-1,
\]
and, for \(\varepsilon>0\), define
\[
D_n^\varepsilon(t)
:=
\widetilde d_n(t)
-
\varepsilon(n-n_0).
\]
Then
\[
D_n^\varepsilon(t)\to-\infty
\qquad
\text{as }n\to\infty,
\]
uniformly for \(t\in[t_0,T_1]\). Hence, if
\(D^\varepsilon\) becomes positive, its first positive maximum is attained
at a finite interior lattice site, say
\((t_\varepsilon,n_\varepsilon)\).

At this point,
\[
\partial_tD_{n_\varepsilon}^\varepsilon(t_\varepsilon)\geq0,
\]
and
\[
D_{n_\varepsilon\pm1}^\varepsilon(t_\varepsilon)
\leq
D_{n_\varepsilon}^\varepsilon(t_\varepsilon).
\]
Since \(a_\ast,b_\ast>0\),
\[
(\mathscr L_\ast D^\varepsilon)_{n_\varepsilon}(t_\varepsilon)
\leq0.
\]
On the other hand,
\[
\mathscr L_\ast(n-n_0)
=
b_\ast-a_\ast
=
-c_\ast,
\]
and therefore
\[
\begin{aligned}
\partial_tD_n^\varepsilon
-
\mathscr L_\ast D_n^\varepsilon
&=
\partial_t\widetilde d_n
-
\mathscr L_\ast\widetilde d_n
+
\varepsilon\mathscr L_\ast(n-n_0)
\\
&\leq
-\widetilde d_n-\varepsilon c_\ast
<0,
\end{aligned}
\]
whenever \(D_n^\varepsilon>0\). This contradicts
\[
\partial_tD_{n_\varepsilon}^\varepsilon
-
(\mathscr L_\ast D^\varepsilon)_{n_\varepsilon}
\geq0.
\]
Hence
\[
D_n^\varepsilon(t)\leq0,
\]
throughout \(\Omega_\zeta\). Letting \(\varepsilon\downarrow0\) gives
\[
\widetilde d_n(t)\leq0,
\]
and therefore
\[
z_n(t)\leq Z_n(t)
\qquad\text{in }\Omega_\zeta.
\]
\end{proof}

\subsection*{Auxiliary tail estimates}

The first estimate uses the geometry of the original tree.

\begin{lemma}[Fixed-time factorial tail]
\label{lem:fixed-time-tail-new}
Assume that
\[
\operatorname{supp}u_0
\subset
\{0,\ldots,R\},
\]
for some \(R\in\NN\). Then, for every fixed \(\tau>0\), there exists
\(C_\tau>0\) such that
\begin{equation}\label{eq:fixed-time-tail-new}
u_n(\tau)
\leq
C_\tau
\frac{
[\alpha(q+1)\tau]^{n-R}
}{
(n-R)!
},
\qquad
n>R.
\end{equation}
Consequently, \(v_n(\tau)\) decays faster than \(e^{-\sigma n}\) for
every fixed \(\sigma>0\).
\end{lemma}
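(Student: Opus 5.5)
Since $0\le u_0\le \mathbf 1_{\{0,\dots,R\}}$ and $\beta u(1-u)\le\beta u$, the comparison principle of Lemma~\ref{lem:nonlinear-wellposedness-radiality} gives $0\le u(t,x)\le \widetilde v(t,x)$. Here $\widetilde v$ is the solution of the linear equation $\partial_t\widetilde v=\alpha\Delta_{\TT_{q+1}}\widetilde v+\beta\widetilde v$ with datum $\mathbf 1_{B_R(o)}$. By radiality, $\widetilde v(t,x)=\widetilde h_n(t)$, and $\widetilde h$ solves the radial system of Lemma~\ref{Lemma:radial-linear-bound} with initial datum $\mathbf 1_{\{n\le R\}}$. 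It therefore suffices to bound $\widetilde h_n(\tau)$. We write it as $e^{(\beta-\alpha(q+1))\tau}e^{\tau B}\mathbf 1_{\{n\le R\}}$, where $B$ is the positive operator of Lemma~\ref{Lemma:radial-linear-bound}, $(Bz)_0=\alpha(q+1)z_1$ and $(Bz)_n=\alpha z_{n-1}+\alpha q z_{n+1}$.

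The key observation is a finite-speed-of-support property of the power series. Since $B$ changes the index by exactly one, the vector $B^k\mathbf 1_{\{n\le R\}}$ is supported in $\{n\le R+k\}$. Hence
\[
\widetilde h_n(\tau)=e^{(\beta-\alpha(q+1))\tau}\sum_{k\ge n-R}\frac{\tau^k}{k!}\bigl(B^k\mathbf 1_{\{n\le R\}}\bigr)_n .
\]
Each row sum of $B$ equals $\alpha(q+1)$ (at $n=0$ it is $\alpha(q+1)$, and for $n\ge1$ it is $\alpha+\alpha q$). Thus $\|B\|_{\ell^\infty\to\ell^\infty}=\alpha(q+1)$ and $0\le (B^k\mathbf 1_{\{n\le R\}})_n\le[\alpha(q+1)]^k$. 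With $\kappa:=\alpha(q+1)\tau$ and $m=n-R$, this gives
\[
u_n(\tau)\le e^{\beta\tau}\sum_{k\ge m}\frac{\kappa^k}{k!}\le e^{\beta\tau}\frac{\kappa^m}{m!}\sum_{j\ge0}\frac{\kappa^j}{j!}=e^{(\beta+\alpha(q+1))\tau}\frac{\kappa^{m}}{m!},
\]
where we used $(m+j)!\ge m!\,j!$. This is \eqref{eq:fixed-time-tail-new} with $C_\tau=e^{(\beta+\alpha(q+1))\tau}$.

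For the consequence, $v_n(\tau)=e^{\lambda_\ast(n-c_\ast\tau)}u_n(\tau)\le C_\tau e^{\lambda_\ast n}\kappa^{n-R}/(n-R)!$. For any $\sigma>0$, the quantity $e^{(\lambda_\ast+\sigma)n}\kappa^{n-R}/(n-R)!$ tends to $0$, because the factorial beats any geometric growth (by Stirling, or by a ratio test). Hence $v_n(\tau)=o(e^{-\sigma n})$. There is no real obstacle here. The only point needing care is justifying the series expansion and the support propagation, and both follow since $B$ is bounded on $\ell^\infty(\NN_0)$, so the exponential series converges in operator norm and may be evaluated termwise.
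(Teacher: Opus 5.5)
Your proof is correct and is essentially the paper's argument in analytic form. The paper compares with the linear problem and bounds it by $e^{\beta\tau}\|u_0\|_{\ell^\infty}\,\mathbb P\{N_\tau\ge n-R\}$, where $N_\tau$ is the Poisson number of jumps of the random walk; this is exactly your expansion $e^{-\alpha(q+1)\tau}\sum_k \frac{\tau^k}{k!}B^k$ combined with the support-propagation and row-sum observations. The only difference is that you discarded the factor $e^{-\alpha(q+1)\tau}$ before summing, so you get $C_\tau=e^{(\beta+\alpha(q+1))\tau}$ instead of the paper's $e^{\beta\tau}\|u_0\|_{\ell^\infty}$, which is harmless.
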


\begin{proof}
Let \(U\) denote the solution of the linearized equation with initial
datum \(u_0\). Since
\[
\beta u(1-u)\leq \beta u
\qquad\text{for }0\leq u\leq1,
\]
the comparison principle gives
\[
0\leq u_n(t)\leq U_n(t).
\]

Let \((X_t)_{t\geq0}\) be the continuous-time simple random walk on
\(\TT_{q+1}\) generated by \(\alpha\Delta_{\TT_{q+1}}\). Since every
vertex has degree \(q+1\), the total jump rate is
\[
\alpha(q+1).
\]
Hence the number \(N_\tau\) of jumps performed by \(X_t\) during the
interval \([0,\tau]\) is a Poisson random variable with parameter
\[
\mu_\tau:=\alpha(q+1)\tau.
\]
The linear semigroup representation gives, for every vertex \(x\) with
\(d(o,x)=n\),
\[
U(\tau,x)
=
e^{\beta\tau}
\mathbb E_x\bigl[u_0(X_\tau)\bigr].
\]

Assume now that \(n>R\). Since
\[
\operatorname{supp}u_0
\subset
\{x\in\TT_{q+1}:d(o,x)\leq R\},
\]
a path starting from a vertex at distance \(n\) from the root cannot
reach the support of \(u_0\) in fewer than \(n-R\) jumps. Consequently,
\[
u_n(\tau)
\leq
U_n(\tau)
\leq
e^{\beta\tau}\|u_0\|_{\ell^\infty}
\mathbb P\{N_\tau\geq n-R\}.
\]

For \(m:=n-R\), using the Poisson distribution we obtain
\[
\begin{aligned}
\mathbb P\{N_\tau\geq m\}
&=
e^{-\mu_\tau}
\sum_{k=m}^{\infty}\frac{\mu_\tau^k}{k!}
\\
&=
e^{-\mu_\tau}
\frac{\mu_\tau^m}{m!}
\sum_{j=0}^{\infty}
\mu_\tau^j\frac{m!}{(m+j)!}
\\
&\leq
e^{-\mu_\tau}
\frac{\mu_\tau^m}{m!}
\sum_{j=0}^{\infty}\frac{\mu_\tau^j}{j!}
=
\frac{\mu_\tau^m}{m!}.
\end{aligned}
\]
Therefore
\[
u_n(\tau)
\leq
e^{\beta\tau}\|u_0\|_{\ell^\infty}
\frac{[\alpha(q+1)\tau]^{n-R}}{(n-R)!},
\]
which proves \eqref{eq:fixed-time-tail-new}.

Finally,
\[
v_n(\tau)
=
e^{\lambda_\ast(n-c_\ast\tau)}u_n(\tau),
\]
and hence, for every fixed \(\sigma>0\),
\[
e^{\sigma n}v_n(\tau)
\leq
C_\tau
e^{(\lambda_\ast+\sigma)n}
\frac{[\alpha(q+1)\tau]^{n-R}}{(n-R)!}.
\]
The right-hand side tends to zero as \(n\to\infty\), since the factorial
dominates every exponential. Thus \(v_n(\tau)\) decays faster than
\(e^{-\sigma n}\) for every fixed \(\sigma>0\).
\end{proof}

Let \(J\geq1\) be such that
\[
\operatorname{supp}w^0\subset[-J,J].
\]

\begin{lemma}[Far-field bound for the dipole]
\label{lem:far-field-dipole-new}
For every \(A>1\), there exists \(\eta_A>0\) such that
\begin{equation}\label{eq:far-field-dipole-new}
|w_n(t)|
\leq
\|w^0\|_{\ell^\infty}
\exp\left[
-A\left(
\frac{n-J-c_\ast t}{\sqrt{t+1}}
-\eta_A
\right)
\right],
\end{equation}
whenever
\[
n-J-c_\ast t
\geq
\eta_A\sqrt{t+1}.
\]
\end{lemma}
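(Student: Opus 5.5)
The bound will come from an exponential (Chernoff) moment argument applied to the dipole representation \eqref{eq:dipole-representation-new}, using the generating function \eqref{eq:tilted-generating-function} from Lemma~\ref{lem:tilted-Green-new}. Since $|w^0_\ell|\le\|w^0\|_{\ell^\infty}$ and $\mathcal G\ge0$, we first get
\[
|w_n(t)|\le\|w^0\|_{\ell^\infty}\sum_{|\ell|\le J}\mathcal G_{n-\ell}(t)\le \|w^0\|_{\ell^\infty}\sum_{m\ge n-J}\mathcal G_m(t),
\]
because the indices $m=n-\ell$ with $|\ell|\le J$ all satisfy $m\ge n-J$. So it suffices to bound the upper tail $P(t,k):=\sum_{m\ge k}\mathcal G_m(t)$ with $k=n-J$. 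Recall that $\mathcal G(t)$ is the law of a continuous-time walk with mean $c_\ast t$ and variance $s_\ast t$.

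For any $\sigma>0$, Markov's inequality with $z=e^\sigma$ in \eqref{eq:tilted-generating-function} gives
\[
P(t,k)\le e^{-\sigma k}F(t,e^\sigma)=\exp\Bigl\{-\sigma(k-c_\ast t)+t\bigl(a_\ast(e^\sigma-1-\sigma)+b_\ast(e^{-\sigma}-1+\sigma)\bigr)\Bigr\}.
\]
The key elementary estimate is that the bracket is $\le C_0\sigma^2$ for $\sigma\in(0,1]$, with $C_0$ depending only on $a_\ast,b_\ast$; for instance $C_0=s_\ast$ works, using $e^{\pm\sigma}-1\mp\sigma\le\sigma^2$ for $\sigma\le1$. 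Now fix $A>1$ and take $\sigma=A/\sqrt{t+1}$. This is $\le1$ once $t+1\ge A^2$. Then
\[
P(t,k)\le\exp\Bigl\{-A\frac{k-c_\ast t}{\sqrt{t+1}}+C_0A^2\Bigr\},
\]
which is \eqref{eq:far-field-dipole-new} provided $A\eta_A\ge C_0A^2$, i.e.\ $\eta_A\ge C_0A$.

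The short times $t+1<A^2$ need a separate case, and I expect this to be the only (minor) obstacle. There I would use the trivial bound $|w_n|\le\|w^0\|_{\ell^\infty}$, which follows from $\sum_m\mathcal G_m=1$. The right-hand side of \eqref{eq:far-field-dipole-new} is at least $\|w^0\|_{\ell^\infty}$ only when $(k-c_\ast t)/\sqrt{t+1}\le\eta_A$, which is the opposite of the hypothesis, so the trivial bound alone does not suffice. Instead, in this bounded time range I would take $\sigma=1$ directly. Since $t\le A^2$, this gives $P\le \exp\{-(k-c_\ast t)+C_0A^2\}$. Writing $x=(k-c_\ast t)/\sqrt{t+1}\ge\eta_A$ and using $\sqrt{t+1}\le A$, the exponent $-x\sqrt{t+1}$ has to be compared with $-A(x-\eta_A)$. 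This is awkward because $\sqrt{t+1}$ may be small compared with $A$. The fix is to use $\sigma=A/\sqrt{t+1}\ge1$ itself even when it is large. The cumulant bracket is then $\le C_0e^{\sigma}$, so the term $t\cdot(\text{bracket})$ is at most $C_0A^2e^{A/\sqrt{t+1}}$. Since $t+1\ge1$, this is $\le C_0A^2e^A$, a constant depending only on $A$. The same computation then closes with $\eta_A:=C_0Ae^{A}$, uniformly for all $t\ge0$, and this single choice covers both regimes at once. Choosing $\eta_A$ large enough to absorb the cumulant term is the only point requiring care.
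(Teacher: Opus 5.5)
Your argument is correct and complete, but it takes a different route from the paper. The paper computes nothing itself. After the rescaling $\tau=A_\ast t$ it invokes the exponential supersolution argument of Besse--Faye--Roquejoffre--Zhang. That argument amounts to checking that $\overline w_n(t)=\|w^0\|_{\ell^\infty}\exp[-A(\xi-\eta_A)]$, with $\xi=(n-J-c_\ast t)/\sqrt{t+1}$, is a supersolution of $\partial_t-\mathscr L_\ast$ in the moving region $\xi\ge\eta_A$. One then concludes by comparison on a moving half-line, using $|w|\le\|w^0\|_{\ell^\infty}$ on the lateral layer and $w^0_n=0$ for $n>J$. You instead combine positivity of $\mathcal G$ with the explicit generating function \eqref{eq:tilted-generating-function} and a Chernoff bound.

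The two proofs hinge on the same quantity. Set $\sigma=A/\sqrt{t+1}$ and $\omega(\sigma):=a_\ast(e^\sigma-1-\sigma)+b_\ast(e^{-\sigma}-1+\sigma)$. The supersolution residual is $\overline w_n\,[A\xi/(2(t+1))-\omega(\sigma)]$, while your exponent is $-\sigma(n-J-c_\ast t)+t\,\omega(\sigma)$. Both therefore lead to $\eta_A$ of order $s_\ast Ae^{A}$.

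Your route buys a self-contained proof with an explicit constant. It also gives a bound valid for all $n\in\ZZ$ and $t\ge0$, since the restriction $n-J-c_\ast t\ge\eta_A\sqrt{t+1}$ is never used. It avoids the moving-domain comparison, and it avoids the bookkeeping needed to turn a bound in $\sqrt{A_\ast t+1}$ back into one in $\sqrt{t+1}$, which the paper treats as a change of constants. The supersolution route, in turn, needs no explicit kernel and survives perturbations of the operator, which is why it fits the barrier framework of the appendix.

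Finally, your case split in $t$ is unnecessary. The Taylor remainder gives $\omega(\sigma)\le\frac{s_\ast}{2}\sigma^2e^{\sigma}$ for every $\sigma>0$. Hence $t\,\omega(A/\sqrt{t+1})\le\frac{s_\ast}{2}A^2e^{A}$ for all $t\ge0$, and $\eta_A=\frac{s_\ast}{2}Ae^{A}$ works in one step.
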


\begin{proof}
After the time rescaling
\[
\tau=A_\ast t,
\]
the equation generated by \(\mathscr L_\ast\) becomes
\[
\partial_\tau z_n
=
\mathscr L_{\rho_\ast}z_n.
\]
Thus, the exponential supersolution argument of
\cite[Section~5.1.1]{Besse-Faye-Roquejoffre-Zhang-2023-TAMS}
applies directly. Returning to the original time variable only changes
the positive constants, and gives \eqref{eq:far-field-dipole-new}.
\end{proof}

\subsection*{Leading-edge barriers}

Choose
\begin{equation}\label{eq:barrier-gamma-new}
\frac13<\gamma<\frac12,
\end{equation}
and
\begin{equation}\label{eq:barrier-parameters-new}
0<\delta<\nu,
\qquad
\nu<\gamma-\delta,
\qquad
2\nu<3\gamma-1.
\end{equation}
Define
\begin{equation}\label{eq:p-new}
p_n(t)
:=
T^{-3/2+\nu}
\cos\left(\frac{y_n(t)}{T^\gamma}\right).
\end{equation}
If
\[
\vartheta:=\frac{y_n(t)}{T^\gamma},
\]
then a direct computation gives
\begin{equation}\label{eq:p-residual-new}
\begin{aligned}
(\partial_t-\mathscr L_\ast)p_n
=
T^{-3/2+\nu}\Bigg(
&
\left[
\frac{\nu-3/2}{T}
+s_\ast(1-\cos T^{-\gamma})
\right]\cos\vartheta
\\
&+
\left[
c_\ast(T^{-\gamma}-\sin T^{-\gamma})
+\frac{\gamma\vartheta}{T}
\right]\sin\vartheta
\Bigg).
\end{aligned}
\end{equation}
Hence, on intervals on which \(\cos\vartheta\) remains uniformly
positive, the leading contribution is of order
\[
T^{-3/2+\nu-2\gamma}.
\]

At each endpoint of a positive cosine interval we use a width-one
truncation: the exterior value is set equal to zero and the endpoint
value is retained. At a left endpoint the deleted neighbor occurs with
coefficient \(a_\ast>0\), whereas at a right endpoint it occurs with
coefficient \(b_\ast>0\). Consequently, deleting a nonpositive exterior
value increases the residual for the upper barrier; deleting a
nonnegative exterior value decreases it for the lower barrier. Between
switching times this proves the required sign at the two endpoint sites.
At a switching time the one-sided limits have the same favorable sign,
so comparison is continued interval by interval. The inequalities
\(\nu<\gamma-\delta\) and \(2\nu<3\gamma-1\) ensure that the quadratic
and modulation errors are lower order than
\(T^{-3/2+\nu-2\gamma}\).

The barriers below use the piecewise truncations of \(p\) introduced in
\cite[Sections~5.1.1--5.1.2]
{Besse-Faye-Roquejoffre-Zhang-2023-TAMS}.
We denote by \(p^+\) the truncation used for the upper barrier and by
\(p^-\) the one used for the lower barrier. They agree with \(p\) in
their respective cosine regions and vanish outside them, up to the
width-one endpoint modifications required by the nearest-neighbor
operator. At the switching sites these modifications have the favorable
sign for the corresponding comparison inequality.

\begin{lemma}[Upper leading-edge barrier]
\label{lem:upper-barrier-new}
There exist \(T_+>0\), \(K_+>0\), a positive bounded increasing function
\(\xi_+\), and a nonnegative smooth function
\(\Xi:\RR\to[0,\infty)\) such that
\begin{equation}\label{eq:upper-barrier-def-new}
\overline v_n(t)
=
K_+\left[
\xi_+(t)w_n(t)
+
p_n^+(t)
+
\Xi\left(\frac{y_n(t)}{\sqrt T}\right)
\right],
\end{equation}
satisfies
\begin{equation}\label{eq:upper-barrier-res-new}
\partial_t\overline v_n
-
\mathscr L_\ast\overline v_n
+
\mathcal N_n(t;\overline v_n)
\geq0,
\end{equation}
for
\[
t\geq T_+,
\qquad
y_n(t)\geq-t^\delta.
\]
Moreover,
\begin{equation}\label{eq:upper-comparison-new}
v_n(t)\leq\overline v_n(t),
\end{equation}
in the same moving half-line.
\end{lemma}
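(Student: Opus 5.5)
The plan is to verify the supersolution inequality \eqref{eq:upper-barrier-res-new} term by term, region by region, and then get \eqref{eq:upper-comparison-new} from Lemma~\ref{lem:moving-half-line-comparison-new}. Since $\mathcal N_n(t;\cdot)\geq0$ on $[0,\infty)$ and $\overline v\geq0$ after the choices below, the nonlinear term can simply be dropped. It is therefore enough to show $(\partial_t-\mathscr L_\ast)\overline v_n\geq0$ on the half-line $y_n(t)\geq-t^\delta$.

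The dipole part $\xi_+ w$ satisfies $(\partial_t-\mathscr L_\ast)(\xi_+w)=\xi_+'w$. This is nonnegative wherever $w\geq0$, which Lemma~\ref{lem:critical-dipole-new} guarantees for $1\leq y\leq L\sqrt t$. Where $w$ may be negative, namely $-t^\delta\leq y\leq 1$ and the far field $y\geq L\sqrt t$, its size is controlled by \eqref{eq:dipole-rough-new} and Lemma~\ref{lem:far-field-dipole-new}. I will take $\xi_+'(t)\lesssim T^{-1-\epsilon}$, so that $\xi_+$ is increasing and bounded.

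I split the half-line into three zones.
\begin{itemize}
\item[(i)] \emph{Inner zone} $-t^\delta\leq y\leq T^\gamma$. Here $p^+=p$ has $\cos\vartheta\geq c>0$. By \eqref{eq:p-residual-new} its residual is $\geq cT^{-3/2+\nu-2\gamma}$: the terms $s_\ast(1-\cos T^{-\gamma})\sim \tfrac{s_\ast}{2}T^{-2\gamma}$ dominate $(3/2-\nu)/T$ because $2\gamma<1$, and the $\sin$ terms are of order $T^{-3\gamma}+|\vartheta|^2/T$. This positive residual absorbs $\xi_+'|w|\lesssim T^{-1-\epsilon}(|y|+1)T^{-3/2}$ provided $\epsilon$ is large enough relative to $\gamma$ and $\delta$. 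The restrictions $\nu<\gamma-\delta$ and $2\nu<3\gamma-1$ enter here. They also make $p$ small compared with $w$ at $y\sim t^{\delta}$, so the barrier keeps the dipole shape, although that is needed only for the lower barrier.
\item[(ii)] \emph{Transition zone} $y\sim T^\gamma$. $p^+$ switches off here with the width-one truncation described before the lemma, which has the favourable sign.
\item[(iii)] \emph{Diffusive/far zone} $y\gtrsim \sqrt T$. The term $\Xi(y/\sqrt T)$ takes over. Take $\Xi(\zeta)=e^{-A(\zeta-\eta_A)}$ smoothly cut off to $0$ for $\zeta\leq\eta_A/2$. Then
\[
(\partial_t-\mathscr L_\ast)\Xi=-\tfrac{\zeta}{2T}\Xi'-\tfrac{s_\ast}{2T}\Xi''+O(T^{-3/2}),
\]
and the drift cancels exactly because $a_\ast-b_\ast=c_\ast$. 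For $\Xi=e^{-A\zeta}$ this is $\tfrac{A}{2T}(\zeta-s_\ast A)\Xi$, which is positive once $\zeta>s_\ast A$. Enlarging $\eta_A$ relative to $A$ makes it positive on the support of $\Xi$, and it dominates $\xi_+'|w|$ there by Lemma~\ref{lem:far-field-dipole-new}.
\end{itemize}
The bad region of the cutoff of $\Xi$ lies at $y\sim\sqrt T$, where $w>0$ is of size $T^{-1}$. There I choose $\xi_+'$ somewhat larger, of order $T^{-1-\epsilon}$ with $\epsilon<1/2$, so that $\xi_+'w\gtrsim T^{-2-\epsilon}$ beats the $O(T^{-1}\cdot\text{cutoff})$ error scaled by a small prefactor placed on $\Xi$. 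This balancing of exponents between (i) and (iii) is what I expect to be the \textbf{main obstacle}. I would first try to simply import the parameter choices of \cite[Section~5.1.1]{Besse-Faye-Roquejoffre-Zhang-2023-TAMS} after the rescaling $\tau=A_\ast t$, $\mathscr L_\ast=A_\ast\mathscr L_{\rho_\ast}$. Their operator matches ours exactly, so the residual computations carry over up to constants.

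For the comparison, the strategy is as follows.
\begin{itemize}
\item \emph{Start time.} Fix $t_0=T_+$ large. By Lemma~\ref{lem:fixed-time-tail-new}, $v_n(t_0)$ decays superexponentially in $n$. On the range $y\geq -t_0^\delta$ it is bounded below by nothing in particular, but it is dominated by $K_+\Xi$ for large $n$ (pick $A$ large) and by $K_+$ times a positive quantity on the finite remaining set.
\item \emph{Left boundary layer.} On $-t^\delta-1\leq y<-t^\delta$ we have $v_n=e^{\lambda_\ast y}u_n\leq e^{-\lambda_\ast t^\delta}$, whereas $\overline v\gtrsim K_+T^{-3/2+\nu}$ from $p^+$, since there $\cos\vartheta\approx1$ and $w$ is negligible by \eqref{eq:dipole-rough-new}.
\item \emph{Bulk.} This moving half-line stays in the bulk $n\geq1$.
\end{itemize}
Choosing $K_+$ large then closes the comparison through Lemma~\ref{lem:moving-half-line-comparison-new}, with the nonlinearity $\mathcal N$ and $v$ bounded as required.
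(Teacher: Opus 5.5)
Your architecture matches the paper's: dipole plus cosine correction plus far-field term, the nonlinearity dropped since $\mathcal N_n(t;\cdot)\geq0$, the left layer handled by $e^{-\lambda_\ast t^\delta}$ against the algebraic size of $p^+$, the initial section by factorial versus exponential decay, and Lemma~\ref{lem:moving-half-line-comparison-new} to conclude. However, the balancing you propose for the step you call the main obstacle fails, and this is a genuine gap.

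For any smooth $\Xi\geq0$ vanishing on $(-\infty,\zeta_1]$, the residual $(\partial_t-\mathscr L_\ast)\Xi(y/\sqrt T)=\frac1T\bigl[-\frac{\zeta}{2}\Xi'-\frac{s_\ast}{2}\Xi''\bigr]+O(T^{-3/2})$ must be negative somewhere in the transition layer. Indeed, if $\zeta\Xi'+s_\ast\Xi''\leq0$ there, then $(e^{\zeta^2/(2s_\ast)}\Xi')'\leq0$ and $\Xi'(\zeta_1)=0$ force $\Xi'\leq0$, hence $\Xi\equiv0$. With a fixed amplitude $\varepsilon_0$ the defect is therefore $\asymp\varepsilon_0T^{-1}$ on sites with $y\asymp\sqrt T$. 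There $p^+=0$, $\mathcal N=O(e^{-c\sqrt T})$, and $\xi_+'w\asymp T^{-1-\epsilon}\cdot T^{-1}$ by Lemma~\ref{lem:critical-dipole-new}. So ``$T^{-2-\epsilon}$ beats $\varepsilon_0T^{-1}$'' is off by a factor $T^{1+\epsilon}$, and no constant prefactor helps.

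One repair is a decaying amplitude $T^{-k}\Xi(y/\sqrt T)$ with $k>1+\epsilon$:
its exponential part has residual $T^{-k-1}\bigl[\frac A2(\zeta-s_\ast A)-k+o(1)\bigr]\Xi$, which stays positive for $\zeta\geq s_\ast A+2k/A+1$ and $T$ large;
the initial comparison survives after enlarging $K_+$;
and $\xi_+'w\asymp T^{-2-\epsilon}$ now pays for the $O(T^{-1-k})$ transition defect.
You then need $w\geq0$ on the whole far field, not just the bound of Lemma~\ref{lem:far-field-dipole-new}. This holds for $y\geq J$: Amos' bound gives $I_m(2A_\ast t)/I_{m+1}(2A_\ast t)>e^{\rho_\ast}$ whenever $m+\frac12>c_\ast t$, hence $\mathcal G_{n-\ell}>\mathcal G_{n+\ell}$. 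The barrier is then no longer literally of the stated form, which is harmless for its later uses. The paper's proof does not supply this step either. It takes the constant-amplitude $\Xi=2\|w^0\|_{\ell^\infty}\Gamma(\eta)e^{-a(\eta-\eta_2)}$ and refers the computation to \cite{Besse-Faye-Roquejoffre-Zhang-2023-TAMS}, so the same obstruction applies to it as written.

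Zone (ii) rests on a sign claim, taken from the text preceding the lemma, that is wrong for a supersolution. At the last site $N$ with $\cos\vartheta_N\geq0$, replacing $p_{N+1}<0$ by $0$ raises $(\mathscr L_\ast p^+)_N$. The residual therefore drops by $b_\ast|p_{N+1}|$, which reaches order $T^{-3/2+\nu-\gamma}$ at suitable times; similarly, the residual at site $N+1$ is $-a_\ast p_N$. The untruncated residual at $N$ is only $O\bigl(T^{-3/2+\nu}(T^{-1}+T^{-3\gamma})\bigr)$, and $\xi_+'w\asymp T^{-5/2-\epsilon+\gamma}$ there. Beating the jump would therefore need $\epsilon<2\gamma-1-\nu<0$.

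A smooth switch-off $T^{-3/2+\nu}\phi(y/T^\gamma)$ at $y\asymp T^\gamma$, with $\phi$ smooth and compactly supported, costs only $O(T^{-3/2+\nu-2\gamma})$. The term $\xi_+'w$ pays for this precisely when $\epsilon<3\gamma-1-\nu$, which is the room the paper's choice $\sigma=3\gamma-2\nu-1$ leaves. So the binding constraint on $\epsilon$ is an upper bound, and your zone (i) requirement that $\epsilon$ be ``large enough'' points the wrong way. By Lemma~\ref{lem:critical-dipole-new}, $w>0$ on $1\leq y\leq T^\gamma$. On $-t^\delta\leq y<1$, the bound $\xi_+'|w|\lesssim T^{-5/2-\epsilon+\delta}$ is absorbed by $T^{-3/2+\nu-2\gamma}$ for every $\epsilon\geq0$, since $\delta<\nu$ and $2\gamma<1$.
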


\begin{proof}
We follow the construction in
\cite[Section~5.1.1]
{Besse-Faye-Roquejoffre-Zhang-2023-TAMS}
and only verify the points that differ in the present setting.

Set
\[
\sigma:=3\gamma-2\nu-1.
\]
By \eqref{eq:barrier-parameters-new}, \(\sigma>0\).
Choose any \(\kappa_+\in(0,1)\) and define
\[
\xi_+(t)
:=
1-\kappa_+T^{-\sigma}.
\]
Then
\[
\xi_+'(t)
=
\kappa_+\sigma T^{-\sigma-1}>0.
\]
After the fixed time rescaling
\[
\tau=A_\ast t,
\]
the linear equation is precisely the asymmetric equation considered in
the cited construction. Since this is only a constant rescaling of time,
the power relations in \eqref{eq:barrier-parameters-new} are unchanged.

We next define the far-field correction. Choose two constants
\[
1<a<A,
\]
where \(A\) is the exponent used in
Lemma~\ref{lem:far-field-dipole-new}. Choose numbers
\[
0<\eta_1<\eta_2,
\]
sufficiently large, and let
\[
\Gamma\in C^\infty(\RR;[0,1]),
\]
be nondecreasing and satisfy
\[
\Gamma(\eta)=0
\quad\text{for }\eta\leq\eta_1,
\qquad
\Gamma(\eta)=1
\quad\text{for }\eta\geq\eta_2.
\]
Define
\[
\Xi(\eta)
:=
2\|w^0\|_{\ell^\infty}
\Gamma(\eta)e^{-a(\eta-\eta_2)}.
\]
The far-field supersolution calculation is then the same as in
\cite[Section~5.1.1]
{Besse-Faye-Roquejoffre-Zhang-2023-TAMS},
after the rescaling \(\tau=A_\ast t\). Since \(A>a\),
Lemma~\ref{lem:far-field-dipole-new} ensures that the dipole term is
smaller than this correction in the far field.

It remains to verify two facts which are specific to the present
problem. First, in the moving domain,
\[
y_n(t)\geq-t^\delta,
\]
implies
\[
n\geq c_\ast t-t^\delta.
\]
Since \(c_\ast>0\) and \(\delta<1\), the whole moving domain lies in
\(n\geq1\) for all sufficiently large \(t\). Thus the exceptional
equation at the root does not enter the barrier argument.

Second, on the left lateral layer
\[
-t^\delta-1
\leq
y_n(t)
<
-t^\delta,
\]
we have
\[
v_n(t)
=
e^{\lambda_\ast y_n(t)}u_n(t)
\leq
e^{-\lambda_\ast t^\delta+O(1)},
\]
whereas \(p_n^+(t)\) has only algebraic decay. Hence the lateral
comparison holds for all sufficiently large \(t\).

Choose \(T_+\) so that all the preceding estimates hold. At the fixed
time \(T_+\), Lemma~\ref{lem:fixed-time-tail-new} gives factorial decay
of \(v_n(T_+)\), whereas the far-field correction has only exponential
decay. Thus, by increasing \(K_+\) if necessary, the initial comparison
also holds. Lemma~\ref{lem:moving-half-line-comparison-new} yields
\eqref{eq:upper-comparison-new}.

At the switching sites of \(p^+\), the truncation changes the barrier in
the favorable direction for a supersolution. Therefore the comparison
argument continues across these switching times.
\end{proof}

For the lower barrier, define
\[
\chi(t)
:=
\sup\left\{
r\geq1:
w_n(t)>0
\ \text{whenever}\
1\leq y_n(t)\leq r
\right\},
\]
and
\begin{equation}\label{eq:wtilde-new}
\widetilde w_n(t)
:=
w_n(t)
\mathbf 1_{\{1\leq y_n(t)\leq\chi(t)\}}.
\end{equation}
By Lemma~\ref{lem:critical-dipole-new}, for every fixed \(L>0\),
\[
\chi(t)\geq L\sqrt t,
\]
for all sufficiently large \(t\).

\begin{lemma}[Lower leading-edge barrier]
\label{lem:lower-barrier-new}
There exist \(T_->0\), \(\varepsilon_->0\), \(\kappa_->0\), and a
positive decreasing function
\[
\xi_-:[T_-,\infty)\to(0,\infty),
\]
bounded away from zero, such that
\begin{equation}\label{eq:lower-barrier-def-new}
\underline v_n(t)
=
\varepsilon_-
\left[
\xi_-(t)\widetilde w_n(t)
-
p_n^-(t)
\right],
\end{equation}
satisfies
\begin{equation}\label{eq:lower-barrier-res-new}
\partial_t\underline v_n
-
\mathscr L_\ast\underline v_n
+
\mathcal N_n(t;\underline v_n)
\leq0,
\end{equation}
for
\[
t\geq T_-,
\qquad
y_n(t)\geq t^\delta.
\]
Moreover,
\begin{equation}\label{eq:lower-comparison-new}
\kappa_-\underline v_n(t-1)
\leq
v_n(t),
\end{equation}
for all sufficiently large \(t\) in the same moving half-line.
\end{lemma}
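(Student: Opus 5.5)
We follow the construction of \cite[Section~5.1.2]{Besse-Faye-Roquejoffre-Zhang-2023-TAMS}, after the fixed rescaling $\tau=A_\ast t$. The computation has three parts: the residual of the main dipole term $\varepsilon_-\xi_-\widetilde w$, the residual of the correction $-\varepsilon_- p^-$, and the nonlinear term. Since $w$ solves $\partial_tw=\mathscr L_\ast w$ exactly, inside the region $1<y_n(t)<\chi(t)$ (away from the truncation sites) we have
\[
(\partial_t-\mathscr L_\ast)(\xi_-\widetilde w_n)=\xi_-'(t)\,w_n(t)\leq0 .
\]
Here we take $\xi_-(t):=1+\kappa T^{-\sigma}$ with $\sigma=3\gamma-2\nu-1>0$. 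This choice makes $\xi_-$ decreasing and bounded away from zero.

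At the two truncation layers the indicator sets an exterior value to zero. At the left layer $y_n\approx1$, the deleted value $w_{n-1}$ enters with coefficient $a_\ast>0$. Deleting it changes the residual by $+a_\ast w_{n-1}$, so the sign is favorable only if $w_{n-1}\ge 0$, and this is not automatic. We therefore absorb this contribution, which is $O(t^{-3/2})$ by \eqref{eq:dipole-rough-new}, into the correction term. The correction $-p^-$ has residual $-(\partial_t-\mathscr L_\ast)p^-$. By \eqref{eq:p-residual-new}, on the cosine region this residual equals
\[
-T^{-3/2+\nu}\bigl[s_\ast(1-\cos T^{-\gamma})\bigr]\cos\vartheta+\dots\approx -\tfrac{s_\ast}{2}\,T^{-3/2+\nu-2\gamma}\cos\vartheta .
\]
This is not quite enough by itself. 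Following BFRZ, we therefore choose the region where $p^-$ is active to be $|y_n|\le \frac{\pi}{2}T^\gamma$ intersected with $y_n\geq t^\delta$. There $\cos\vartheta$ is bounded below, since $t^\delta\ll T^\gamma$. The negative term of order $T^{-3/2+\nu-2\gamma}$ then dominates the boundary-layer errors and the modulation errors $\gamma\vartheta\sin\vartheta/T$. The parameter relations \eqref{eq:barrier-parameters-new} are exactly what make these errors smaller.

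For the nonlinear term we use \eqref{eq:N-properties-new}: $\mathcal N_n(t;s)=0$ when $s\le0$. Where $\underline v>0$ we bound it by
\[
\mathcal N_n(t;\underline v)\le\beta e^{-\lambda_\ast y_n}\underline v^2\le C e^{-\lambda_\ast t^\delta}\varepsilon_-^2 ,
\]
using $y_n\ge t^\delta$ and the boundedness of $\underline v$. This is exponentially small and is absorbed by the negative polynomial residuals. This yields \eqref{eq:lower-barrier-res-new}.

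The comparison \eqref{eq:lower-comparison-new} will come from Lemma~\ref{lem:moving-half-line-comparison-new} applied on $y_n\ge t^\delta$, with the time shift $t-1$ used to produce strict positivity at the start. We need three inputs.
\begin{itemize}
\item \emph{Initial section.} At a large time $T_-$, $v_n(T_-)>0$ for every $n$ by the strong positivity of the linear semigroup, and $\underline v_n(T_--1)$ is compactly supported in $n$, because $\widetilde w$ is cut off at $\chi$ (and $\chi<\infty$, since the dipole formula and the far-field bound, Lemma~\ref{lem:far-field-dipole-new}, only give positivity on a bounded range). Taking $\kappa_-$ small therefore gives the inequality on the initial section.
\item \emph{Left lateral layer} $y_n\approx t^\delta$. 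Here $\underline v$ is $O(\varepsilon_- t^{\delta-3/2})$. We compare it with a crude lower bound on $v$ obtained from the linear kernel estimates of Theorem~\ref{Thm:two-sided-homogeneous} together with $u\ge e^{-\beta t}\cdot$(linear solution) restricted to small values. We expect this step to need an auxiliary lower barrier, possibly borrowing BFRZ's choice: making $\underline v\le0$ on the layer by enlarging $p^-$ there, which is automatic if $\xi_-\widetilde w-p^-<0$ at $y\approx t^\delta$, since $w\sim t^{\delta-3/2}\ll T^{-3/2+\nu}$ when $\delta<\nu$. This is how we would handle it: the lower barrier is nonpositive on the left layer, so that boundary comparison is trivial.
\item \emph{Right side.} Beyond $\chi(t)$ the barrier is also nonpositive.
\end{itemize}

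The main obstacle is the sign bookkeeping at the truncation sites of $\widetilde w$ and $p^-$. In particular, the claim $\delta<\nu$ gives $\underline v\le0$ near $y=t^\delta$, so the lower boundary comparison is free. We would verify first that the positive-part region of $\underline v$ stays strictly inside $[t^\delta,\chi(t)]$.
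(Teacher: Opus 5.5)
\textbf{There is a genuine gap: the initial comparison.} You reduce it to finitely many sites by asserting that $\underline v_n(T_--1)$ is compactly supported because $\chi<\infty$. Lemmas~\ref{lem:critical-dipole-new} and \ref{lem:far-field-dipole-new} establish positivity of $w$ only on a bounded range, but that does not mean $w$ changes sign beyond it. In fact $\chi(t)=+\infty$ for large $t$:
\begin{itemize}
\item For $n\geq\ell$, with all Bessel functions evaluated at $2A_\ast t$, one has $\mathcal G_{n-\ell}(t)-\mathcal G_{n+\ell}(t)=e^{-s_\ast t}e^{\rho_\ast n}\bigl(e^{-\rho_\ast\ell}I_{n-\ell}-e^{\rho_\ast\ell}I_{n+\ell}\bigr)$.
\item The ratio $I_{n-\ell}/I_{n+\ell}$ is increasing in $n$, because $m\mapsto I_{m+1}/I_m$ is decreasing by the Tur\'an-type inequality $I_m^2>I_{m-1}I_{m+1}$.
\item Each bracket is already positive at $y_n(t)=t^\gamma$ by Proposition~\ref{prop:Green-refined-asymptotics-new}, hence stays positive for all larger $n$.
\end{itemize}
Together with positivity on $1\le y_n\le L\sqrt t$, this gives $w_n(t)>0$ for every $y_n(t)\ge1$. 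So $\underline v_n(T_--1)=\varepsilon_-\xi_-w_n>0$ for all large $n$. Pointwise positivity of $v_n(T_-)$ plus a small $\kappa_-$ then gives nothing uniform in $n$; you need a tail comparison between $w$ and $v$.

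The paper supplies this with a linear comparison that your proposal lacks. The function $V_n(s):=v_n(s+1)$ satisfies $\partial_sV_n-\mathscr L_\ast V_n+\beta V_n=\beta V_n(1-u_n(s+1))\ge0$ for $n\ge1$. The function $Z_n(s)=\varepsilon_0e^{-\beta s}w_n(s)$ solves the same equation with equality. Choose $\varepsilon_0$ so that $\varepsilon_0w^0_n\le v_n(1)$ on the finite set $\operatorname{supp}w^0\cap\NN$, and so that $Z_0(s)\le v_0(s+1)$ on $[0,T_-]$. Comparison on the fixed half-line $n\ge1$ then gives $v_n(T_-+1)\ge c_0w_n(T_-)$ for every $n\ge1$, and the initial inequality follows once $\kappa_-\varepsilon_-\le c_0$. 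This is the actual role of the unit time shift: it makes $v(1)>0$ on $\operatorname{supp}w^0$. Positivity of $v$ at the starting time holds for every $t>0$ and is not what is needed.

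\textbf{The nonlinear term and the time shift.} Since $\chi=\infty$, the set where $\underline v>0$ and $p^-=0$ is unbounded. There the only negative term is $\varepsilon_-\xi_-'w_n$, which tends to $0$ factorially as $n\to\infty$ at fixed $t$. Your $n$-independent bound $\mathcal N\le Ce^{-\lambda_\ast t^\delta}\varepsilon_-^2$ therefore cannot be absorbed. Keep the quadratic structure instead. From $0<\underline v\le\varepsilon_-\xi_-w$ you get $\mathcal N_n(t;\underline v)\le\beta\varepsilon_-^2\xi_-^2e^{-\lambda_\ast y_n}w_n^2$, so it suffices that $|\xi_-'|\ge\beta\varepsilon_-\xi_-^2\sup_{y_n\ge t^\delta}e^{-\lambda_\ast y_n}w_n$. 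This is what the paper's choice $\xi_-'=-C\beta\varepsilon_-\xi_-^2(t+1)^{-2}$ encodes; your $1+\kappa T^{-\sigma}$ also works after this change. You also need to check that $Q_n(t)=\kappa_-\underline v_n(t-1)$ is still a subsolution at time $t$. The shift $y_n(t-1)=y_n(t)+c_\ast$ changes the weight in $\mathcal N$, and the paper handles this by imposing $\kappa_-\le e^{-\lambda_\ast c_\ast}$.

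\textbf{The left truncation of $\widetilde w$.} Your sign is reversed. Deleting $w_{n-1}$ adds $a_\ast w_{n-1}$ to the residual, which is favorable if and only if $w_{n-1}\le0$. Your proposed remedy would also fail: the $p^-$ residual is only $O(T^{-3/2+\nu-2\gamma})=o(t^{-3/2})$, too small to absorb an $O(t^{-3/2})$ error. The issue is moot, however, since that truncation sits at $y\approx1$, outside $\{y_n\ge t^\delta\}$.

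Your final treatment of the left lateral layer, making $\underline v\le0$ there because $t^{\delta-3/2}\ll T^{-3/2+\nu}$, agrees with the paper.
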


\begin{proof}
We follow the construction in \cite[Section 5.1.2]{Besse-Faye-Roquejoffre-Zhang-2023-TAMS}
and indicate the modifications required in the present setting.

Choose \(T_->0\) sufficiently large and
\(\varepsilon_->0\) sufficiently small. Fix a sufficiently large
constant \(C>0\), and define \(\xi_-\) by
\[
\xi_-'(t)
=
-C\beta\varepsilon_-\xi_-^2(t)(t+1)^{-2},
\qquad
\xi_-(T_-)=1.
\]
Since
\[
\int_{T_-}^{\infty}(t+1)^{-2}\,dt<\infty,
\]
we have
\[
\frac{1}{\xi_-(t)}
=
1+
C\beta\varepsilon_-
\int_{T_-}^{t}(s+1)^{-2}\,ds,
\]
and therefore \(\xi_-\) is positive, decreasing, and bounded away
from zero on \([T_-,\infty)\).

After the time rescaling
\[
\tau=A_\ast t,
\qquad
A_\ast=\sqrt{a_\ast b_\ast},
\]
the linear part becomes precisely the operator considered in
\cite[Section 5.1.2]{Besse-Faye-Roquejoffre-Zhang-2023-TAMS}. The only difference
in the nonlinear term is the exponential factor appearing in
\eqref{eq:N-quadratic-new}. Since
\[
\rho_\ast
=
\lambda_\ast-\frac12\log q
<
\lambda_\ast,
\]
we have, whenever \(y_n(t)\ge t^\delta\),
\[
e^{-\lambda_*y_n(t)}
\le
e^{-\rho_*y_n(t)}.
\]
Thus the nonlinear contribution is bounded by the corresponding one
in the construction of Besse et al., up to a fixed positive
constant. Consequently, after choosing \(C\) sufficiently large and
\(\varepsilon_-\) sufficiently small, the function
\[
\underline v_n(t)
=
\varepsilon_-
\left(
\xi_-(t)\widetilde w_n(t)-p_n^-(t)
\right), 
\]
satisfies
\[
\partial_t\underline v_n
-
\mathscr L_\ast\underline v_n
+
\mathcal N_n(t;\underline v_n)
\leq 0.\]
for
\[
t\ge T_-,
\qquad
y_n(t)\ge t^\delta.
\]
We also choose \(\varepsilon_-\) small enough that, whenever
\(\underline v_n(t)>0\),
\[
0
\le
e^{-\lambda_*y_n(t)}\underline v_n(t)
\le1,
\]
so that the quadratic identity \eqref{eq:N-quadratic-new} is applicable.

The truncation in the definition of \(\widetilde w\) does not destroy
the subsolution inequality. Indeed, at the rightmost active site the
neighboring value of \(w\) which is removed is nonpositive. Since
\(a_*,b_*>0\), removing this value changes
\(\mathcal L_*\widetilde w\) in the favorable direction. The same
observation applies when the active endpoint changes with time.

It remains to compare the lower barrier with the true solution. We
first establish a uniform comparison between the dipole \(w\) and a
positive time translate of \(v\).

Since \(u_0\not\equiv0\) and \(u_0\ge0\), positivity of the heat
semigroup and the variation-of-constants formula imply
\[
u(t,x)>0,
\qquad
t>0,\quad x\in\mathbb T_{q+1}.
\]
Hence
\[
v_n(t)>0,
\qquad
t>0,\quad n\ge0.
\]

Set
\[
Z_n(s):=\varepsilon_0e^{-\beta s}w_n(s),
\qquad
0\le s\le T_-,
\]
where \(\varepsilon_0>0\) will be chosen below. Since \(w\) solves
the linear equation,
\[
\partial_s Z_n-\mathscr L_*Z_n+\beta Z_n=0.
\]
On the other hand, from the equation satisfied by \(v\),
\[
\partial_t v_n
-
\mathscr L_*v_n
=
-\beta e^{-\lambda_*y_n(t)}v_n^2,
\]
we obtain
\[
\partial_s v_n(s+1)
-
\mathcal L_*v_n(s+1)
+
\beta v_n(s+1)
=
\beta v_n(s+1)\bigl(1-u_n(s+1)\bigr)
\ge0.
\]

Recall that \(w^0\) is compactly supported and
\[
w_n^0\ge0,
\qquad n\ge1.
\]
Since \(v_n(1)>0\), we may choose \(\varepsilon_0>0\) sufficiently
small so that
\[
\varepsilon_0w_n^0\le v_n(1),
\qquad n\ge1.
\]
Indeed, this only has to be checked on the finite set
\(\operatorname{supp}w^0\cap\mathbb N\).

We also choose \(\varepsilon_0\) so that the lateral comparison at
\(n=0\) holds. Since
\[
\min_{0\le s\le T_-}v_0(s+1)>0, 
\]
and \(w_0\) is bounded on the compact interval \([0,T_-]\), after
decreasing \(\varepsilon_0\), if necessary, we have
\[
\varepsilon_0e^{-\beta s}w_0(s)
\le
v_0(s+1),
\qquad
0\le s\le T_-.
\]
The comparison principle on the fixed half-line \(n\ge1\) therefore
gives
\[
\varepsilon_0e^{-\beta s}w_n(s)
\le
v_n(s+1),
\qquad
0\le s\le T_-,
\quad n\ge1.
\]
In particular,
\begin{equation}\label{eq:dipole-true-comparison}
v_n(T_-+1)
\ge
c_0 w_n(T_-),
\qquad n\ge1,
\end{equation}
where
\[
c_0:=\varepsilon_0e^{-\beta T_-}>0.
\]

We now obtain the initial comparison for the lower barrier.
Outside the bounded support of \(p^-(T_-)\), whenever
\(\underline v_n(T_-)>0\), we necessarily have
\(\widetilde w_n(T_-)=w_n(T_-)>0\), and therefore
\[
\underline v_n(T_-)
\le
\varepsilon_- w_n(T_-),
\]
because \(0<\xi_-(T_-)=1\).
Hence, by \eqref{eq:dipole-true-comparison}, choosing
\(\kappa_->0\) sufficiently small so that
\[
\kappa_-\varepsilon_-\le c_0,
\]
gives
\[
\kappa_-\underline v_n(T_-)
\le
v_n(T_-+1),
\]
outside \(\operatorname{supp}p^-(T_-)\).
On the remaining finite set the same inequality follows, after
decreasing \(\kappa_-\) once more, from the strict positivity of
\(v_n(T_-+1)\). Thus
\begin{equation}\label{eq:initial-lower-comparison}
\kappa_-\underline v_n(T_-)
\le
v_n(T_-+1),
\end{equation}
on the whole initial section relevant to the moving comparison.

We may additionally require
\begin{equation}\label{eq:kappa-lower-choice}
0<\kappa_-\le e^{-\lambda_*c_*}.
\end{equation}
For \(t\ge T_-+1\), define
\[
Q_n(t):=\kappa_-\underline v_n(t-1).
\]
Notice that
\[
y_n(t-1)=y_n(t)+c_*.
\]
In particular, if \(y_n(t)\ge t^\delta\), then
\[
y_n(t-1)
=
y_n(t)+c_*
\ge
t^\delta
\ge
(t-1)^\delta,
\]
so that the subsolution inequality for \(\underline v\) is available
at \((t-1,n)\).

Suppose first that \(Q_n(t)>0\). Then
\(\underline v_n(t-1)>0\), and by the choice of
\(\varepsilon_-\),
\[
N_n(t-1;\underline v_n(t-1))
=
\beta
e^{-\lambda_*y_n(t-1)}
\underline v_n(t-1)^2.
\]
Moreover, by \eqref{eq:kappa-lower-choice},
\[
e^{-\lambda_*y_n(t)}Q_n(t)
=
\kappa_-e^{\lambda_*c_*}
e^{-\lambda_*y_n(t-1)}
\underline v_n(t-1)
\le1,
\]
and therefore
\[
N_n(t;Q_n(t))
=
\beta e^{-\lambda_*y_n(t)}Q_n(t)^2.
\]
Using the subsolution inequality for \(\underline v\), we obtain
\[
\begin{aligned}
\partial_tQ_n
-\mathcal L_*Q_n
+N_n(t;Q_n)
&\le
-\kappa_-
N_n(t-1;\underline v_n(t-1))
+
N_n(t;Q_n)
\\
&=
\beta\kappa_-
e^{-\lambda_*y_n(t-1)}
\underline v_n(t-1)^2
\left(
-1+\kappa_-e^{\lambda_*c_*}
\right)
\\
&\le0.
\end{aligned}
\]
If \(Q_n(t)\le0\), then \(N_n(t;Q_n)=0\). Since
\(\underline v_n(t-1)\le0\) in this case, also
\(N_n(t-1;\underline v_n(t-1))=0\), and the subsolution inequality
for \(\underline v\) gives
\[
\partial_tQ_n-\mathcal L_*Q_n\le0.
\]
Thus \(Q\) is a subsolution in the moving half-line
\[
y_n(t)\ge t^\delta.
\]

On its left lateral layer, the truncation \(p^-\) is chosen precisely
so that
\[
Q_n(t)\le0,
\]
whereas
\[
v_n(t)\ge0.
\]
Together with the initial comparison
\eqref{eq:initial-lower-comparison}, Lemma~\ref{lem:moving-half-line-comparison-new} therefore yields
\[
Q_n(t)\le v_n(t),
\]
for all sufficiently large \(t\) in the same moving half-line.
Equivalently,
\[
\kappa_-\underline v_n(t-1)\le v_n(t).
\]

\end{proof}

\begin{corollary}[Technical leading-edge estimate]
\label{cor:technical-leading-edge-new}
Choose
\begin{equation}\label{eq:eta-new}
\max\{\delta,\nu\}
<
\eta
<
\min\{\gamma,1/4\}.
\end{equation}
Then there exist \(T_\eta>0\) and constants \(0<c_0<C_0\) such that
\begin{equation}\label{eq:technical-leading-edge-new}
c_0t^\eta
\leq
t^{3/2}v_n(t)
\leq
C_0t^\eta, 
\end{equation}
whenever
\[
t\geq T_\eta,
\qquad
\frac12t^\eta
\leq
n-c_\ast t
\leq
2t^\eta.
\]
\end{corollary}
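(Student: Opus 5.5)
The plan is to read off both inequalities in \eqref{eq:technical-leading-edge-new} from the two barriers. The upper bound comes from Lemma~\ref{lem:upper-barrier-new}, and the lower bound from Lemma~\ref{lem:lower-barrier-new}. In each case the dipole term $w$ is evaluated with Lemma~\ref{lem:critical-dipole-new} in the window
\[
\tfrac12 t^\eta\le y_n(t)\le 2t^\eta .
\]
The constraint $\eta<\min\{\gamma,1/4\}$ guarantees three things in this window: $y_n(t)^2/t\to0$, the window lies inside the region $t^{\delta}\le y_n\le\vartheta t^\gamma$ where the relative asymptotic \eqref{eq:dipole-asymptotic-new} holds (this uses $\eta>\delta$), and the window lies inside both moving half-lines $y_n\ge -t^\delta$ and $y_n\ge t^\delta$. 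Consequently
\[
w_n(t)=\frac{2M_1}{\sqrt{2\pi}s_\ast^{3/2}}\,\frac{y_n(t)}{t^{3/2}}\,(1+o(1)),
\]
so that $t^{3/2}w_n(t)\asymp t^\eta$ uniformly in the window.

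For the upper bound I will use $v_n\le\overline v_n=K_+[\xi_+w_n+p_n^++\Xi(y_n/\sqrt T)]$. Since $\xi_+\le1$, the dipole term contributes at most $C t^{\eta-3/2}$. For the truncated cosine term, $|p^+_n|\le T^{-3/2+\nu}$, and because $\nu<\eta$ this is $o(t^{\eta-3/2})$. The far-field term $\Xi$ vanishes in the window, since $\Xi(\eta')=0$ for $\eta'\le\eta_1$ while $y_n/\sqrt T\le 2t^{\eta-1/2}\to0<\eta_1$. Adding these gives $t^{3/2}v_n\le C_0t^\eta$.

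For the lower bound I will use $v_n(t)\ge\kappa_-\underline v_n(t-1)$. First, $y_n(t-1)=y_n(t)+c_\ast$ stays in a slightly enlarged window, which by Lemma~\ref{lem:critical-dipole-new} (via $\chi(t)\ge L\sqrt t$) lies inside $\{1\le y\le\chi\}$, so there $\widetilde w=w$. Next, $\xi_-\ge c>0$, and therefore
\[
\underline v_n(t-1)\ge\varepsilon_-\bigl[c\,c' t^{\eta-3/2}-T^{-3/2+\nu}\bigr]\ge \tfrac12\varepsilon_- c\,c' t^{\eta-3/2}
\]
for large $t$, again because $\nu<\eta$. Finally, shifting $t$ to $t-1$ changes the powers only by bounded factors.

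The main obstacle is bookkeeping rather than analysis. I have to check that the exponent constraints \eqref{eq:barrier-parameters-new} and \eqref{eq:eta-new} are mutually compatible (for instance $\delta<\nu<\gamma-\delta$ with $\gamma>1/3$ leaves room for $\eta<1/4$), and that the truncated $p^\pm$ really are bounded by $T^{-3/2+\nu}$ throughout the window. Both properties hold by construction, so the estimate follows with $T_\eta$ chosen as the maximum of $T_+$, $T_-+1$, and the times at which the dipole asymptotics and the $o(1)$ absorptions become effective.
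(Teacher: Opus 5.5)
Your proposal is correct and follows the paper's proof essentially line by line. Lemma~\ref{lem:critical-dipole-new} gives $w_n\asymp t^{-3/2+\eta}$ in the window, and also at time $t-1$ since $y_n(t-1)=y_n(t)+c_\ast$; the cosine terms are $O(T^{-3/2+\nu})=o(t^{-3/2+\eta})$ because $\nu<\eta$; $\Xi$ vanishes because $y_n/\sqrt T\to0$; and the two barrier comparisons then give both bounds.

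One small correction to your side remark on compatibility of the exponents. The fact that the range for $\eta$ is nonempty comes from $2\nu<3\gamma-1<\tfrac12$ (using $\gamma<\tfrac12$), which forces $\nu<\tfrac14$. It does not follow from $\delta<\nu<\gamma-\delta$ together with $\gamma>\tfrac13$.
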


\begin{proof}
In this region,
Lemma~\ref{lem:critical-dipole-new} gives
\[
w_n(t)
\asymp
t^{-3/2+\eta}.
\]
Since
\[
y_n(t-1)
=
y_n(t)+c_\ast,
\]
the same estimate holds at time \(t-1\). Moreover,
\[
\frac{
T^{-3/2+\nu}
}{
t^{-3/2+\eta}
}
\longrightarrow 0,
\]
because \(\nu<\eta\). Finally, since \(\eta<1/2\), the argument of the
far-field correction satisfies
\[
\frac{y_n(t)}{\sqrt T}\longrightarrow0,
\]
and therefore \(\Xi(y_n(t)/\sqrt T)=0\) for all sufficiently large
\(t\).

The upper and lower barrier estimates consequently give
\[
v_n(t)
\asymp
t^{-3/2+\eta}, 
\]
uniformly in the stated region, which proves
\eqref{eq:technical-leading-edge-new}.
\end{proof}

\bibliographystyle{abbrv}
\bibliography{Biblio}

@article{Amos-1974,
 author = {Amos, D. E.},
 title = {Computation of modified {Bessel} functions and their ratios},
 fjournal = {Mathematics of Computation},
 journal = {Math. Comput.},
 issn = {0025-5718},
 volume = {28},
 pages = {239--251},
 year = {1974},
 language = {English},
 doi = {10.2307/2005830},
 zbMATH = {3436518},
 Zbl = {0277.65006}
}

@article{Aronson-Weinberger-1978-Advances,
 author = {Aronson, D. G. and Weinberger, H. F.},
 title = {Multidimensional nonlinear diffusion arising in population genetics},
 fjournal = {Advances in Mathematics},
 journal = {Adv. Math.},
 issn = {0001-8708},
 volume = {30},
 pages = {33--76},
 year = {1978},
 language = {English},
 doi = {10.1016/0001-8708(78)90130-5},
 zbMATH = {3634070},
 Zbl = {0407.92014}
}

@article{Bramson-1978,
 author = {Bramson, Maury D.},
 title = {Maximal displacement of branching {Brownian} motion},
 fjournal = {Communications on Pure and Applied Mathematics},
 journal = {Commun. Pure Appl. Math.},
 issn = {0010-3640},
 volume = {31},
 pages = {531--581},
 year = {1978},
 language = {English},
 doi = {10.1002/cpa.3160310502},
 zbMATH = {3562205},
 Zbl = {0361.60052}
}

@article{Besse-Faye-2021-BML,
 author = {Besse, Christophe and Faye, Gr{\'e}gory},
 title = {Spreading properties for {SIR} models on homogeneous trees},
 fjournal = {Bulletin of Mathematical Biology},
 journal = {Bull. Math. Biol.},
 issn = {0092-8240},
 volume = {83},
 number = {11},
 pages = {27},
 note = {Id/No 114},
 year = {2021},
 language = {English},
 doi = {10.1007/s11538-021-00948-7},
 zbMATH = {7432913},
 Zbl = {1475.92148}
}

@article{Besse-Faye-Roquejoffre-Zhang-2023-TAMS,
 author = {Besse, Christophe and Faye, Gr{\'e}gory and Roquejoffre, Jean-Michel and Zhang, Mingmin},
 title = {The logarithmic {Bramson} correction for {Fisher}-{KPP} equations on the lattice {{\(\mathbb{Z} \)}}},
 fjournal = {Transactions of the American Mathematical Society},
 journal = {Trans. Am. Math. Soc.},
 issn = {0002-9947},
 volume = {376},
 number = {12},
 pages = {8553--8619},
 year = {2023},
 language = {English},
 doi = {10.1090/tran/9007},
 zbMATH = {7772568},
 Zbl = {1534.34024}
}

@article{Cowling-Meda-Setti-2000-TAMS,
 author = {Cowling, Michael and Meda, Stefano and Setti, Alberto G.},
 title = {Estimates for functions of the {Laplace} operator on homogeneous trees},
 fjournal = {Transactions of the American Mathematical Society},
 journal = {Trans. Am. Math. Soc.},
 issn = {0002-9947},
 volume = {352},
 number = {9},
 pages = {4271--4293},
 year = {2000},
 language = {English},
 doi = {10.1090/S0002-9947-00-02460-0},
 zbMATH = {1460657},
 Zbl = {0949.43008}
}

@article{Fang-Li-Lou-Wang-2026-JDE,
 author = {Fang, Jian and Li, Yifei and Lou, Yijun and Wang, Jian},
 title = {Fisher-{KPP} waves and the minimal speed on hexagonal lattice},
 fjournal = {Journal of Differential Equations},
 journal = {J. Differ. Equations},
 issn = {0022-0396},
 volume = {467},
 pages = {36},
 note = {Id/No 114263},
 year = {2026},
 language = {English},
 doi = {10.1016/j.jde.2026.114263},
 zbMATH = {8191847}
}

@book{Gradshteyn-Ryzhik-2015,
 author = {Gradshteyn, I. S. and Ryzhik, I. M.},
 title = {Table of integrals, series, and products. {Translated} from the {Russian}. {Translation} edited and with a preface by {Victor} {Moll} and {Daniel} {Zwillinger}},
 edition = {8th updated and revised ed.},
 isbn = {978-0-12-384933-5; 978-0-12-384934-2},
 year = {2015},
 publisher = {Amsterdam: Elsevier/Academic Press},
 language = {English},
 url = {www.sciencedirect.com/science/book/9780123849335},
 zbMATH = {6369300},
 Zbl = {1300.65001}
}

@article{Guo-Wu-2008-Osaka,
 author = {Guo, Jong-Shenq and Wu, Chang-Hong},
 title = {Existence and uniqueness of traveling waves for a monostable 2-{D} lattice dynamical system},
 fjournal = {Osaka Journal of Mathematics},
 journal = {Osaka J. Math.},
 issn = {0030-6126},
 volume = {45},
 number = {2},
 pages = {327--346},
 year = {2008},
 language = {English},
 zbMATH = {5309630},
 Zbl = {1155.34016}
}

@article{Hoffman-Holzer-2019,
 author = {Hoffman, Aaron and Holzer, Matt},
 title = {Invasion fronts on graphs: the {Fisher}-{KPP} equation on homogeneous trees and {Erd{\H{o}}s}-{R{\'e}yni} graphs},
 fjournal = {Discrete and Continuous Dynamical Systems. Series B},
 journal = {Discrete Contin. Dyn. Syst., Ser. B},
 issn = {1531-3492},
 volume = {24},
 number = {2},
 pages = {671--694},
 year = {2019},
 language = {English},
 doi = {10.3934/dcdsb.2018202},
 zbMATH = {7000388},
 Zbl = {1410.35267}
}

@article{Hupkes-Jukic-Stehlik-Sigler-2023,
 author = {Hupkes, Hermen Jan and Juki{\'c}, Mia and Stehl{\'{\i}}k, Petr and {\v{S}}v{\'{\i}}gler, Vladim{\'{\i}}r},
 title = {Propagation reversal for bistable differential equations on trees},
 fjournal = {SIAM Journal on Applied Dynamical Systems},
 journal = {SIAM J. Appl. Dyn. Syst.},
 issn = {1536-0040},
 volume = {22},
 number = {3},
 pages = {1906--1944},
 year = {2023},
 language = {English},
 doi = {10.1137/22M1502203},
 zbMATH = {7729131},
 Zbl = {1523.34015}
}

@article{Zinner-Harris-Hudson-1993-JDE,
 author = {Zinner, Bertram and Harris, Glenn and Hudson, William},
 title = {Traveling wavefronts for the discrete {Fisher}'s equation},
 journal = {J. Differential Equations},
 volume = {105},
 number = {1},
 pages = {46--62},
 year = {1993},
 doi = {10.1006/jdeq.1993.1082}
}

@article{Chen-Guo-2003-MathAnn,
 author = {Chen, Xinfu and Guo, Jong-Shenq},
 title = {Uniqueness and existence of traveling waves for discrete quasilinear monostable dynamics},
 journal = {Math. Ann.},
 volume = {326},
 number = {1},
 pages = {123--146},
 year = {2003},
 doi = {10.1007/s00208-003-0414-0}
}

@article{AlHaj-Monneau-2024-JDE,
 author = {Al Haj, M. and Monneau, R.},
 title = {Traveling waves for discrete reaction-diffusion equations in the general monostable case},
 fjournal = {Journal of Differential Equations},
 journal = {J. Differ. Equations},
 issn = {0022-0396},
 volume = {378},
 pages = {707--756},
 year = {2024},
 language = {English},
 doi = {10.1016/j.jde.2023.10.017},
 zbMATH = {7765637},
 Zbl = {1527.35118}
}

@article{Carr-Chmaj-2004-PAMS,
 author = {Carr, Jack and Chmaj, Adam},
 title = {Uniqueness of travelling waves for nonlocal monostable equations},
 fjournal = {Proceedings of the American Mathematical Society},
 journal = {Proc. Am. Math. Soc.},
 issn = {0002-9939},
 volume = {132},
 number = {8},
 pages = {2433--2439},
 year = {2004},
 language = {English},
 doi = {10.1090/S0002-9939-04-07432-5},
 zbMATH = {2091051},
 Zbl = {1061.45003}
}

@article{Bateman-1943-BAMS,
 author = {Bateman, H.},
 title = {Some simple differential difference equations and the related functions},
 fjournal = {Bulletin of the American Mathematical Society},
 journal = {Bull. Am. Math. Soc.},
 issn = {0002-9904},
 volume = {49},
 pages = {494--512},
 year = {1943},
 language = {English},
 doi = {10.1090/S0002-9904-1943-07927-X},
 zbMATH = {3100626},
 Zbl = {0061.20201}
}

@article{Bouin-Henderson-Ryzhik-2020-Poicare,
 author = {Bouin, Emeric and Henderson, Christopher and Ryzhik, Lenya},
 title = {The {Bramson} delay in the non-local {Fisher}-{KPP} equation},
 fjournal = {Annales de l'Institut Henri Poincar{\'e}. Analyse Non Lin{\'e}aire},
 journal = {Ann. Inst. Henri Poincar{\'e}, Anal. Non Lin{\'e}aire},
 issn = {0294-1449},
 volume = {37},
 number = {1},
 pages = {51--77},
 year = {2020},
 language = {English},
 doi = {10.1016/j.anihpc.2019.07.001},
 zbMATH = {7152414},
 Zbl = {1436.35239}
}

@article{Alfaro-Giletti-Xiao-2025-MathAnn,
 author = {Alfaro, Matthieu and Giletti, Thomas and Xiao, Dongyuan},
 title = {The {Bramson} correction in the {Fisher}-{KPP} equation: from delay to advance},
 fjournal = {Mathematische Annalen},
 journal = {Math. Ann.},
 issn = {0025-5831},
 volume = {392},
 number = {4},
 pages = {5275--5316},
 year = {2025},
 language = {English},
 doi = {10.1007/s00208-025-03231-5},
 zbMATH = {8095765},
 Zbl = {1573.35354}
}

@book{Deimling-1977-Springer,
 author = {Deimling, Klaus},
 title = {Ordinary differential equations in {Banach} spaces},
 fseries = {Lecture Notes in Mathematics},
 series = {Lect. Notes Math.},
 issn = {0075-8434},
 volume = {596},
 year = {1977},
 publisher = {Springer, Cham},
 language = {English},
 doi = {10.1007/bfb0091636},
 zbMATH = {3561733},
 Zbl = {0361.34050}
}

@book{Smith-1995-Book,
 author = {Smith, Hal L.},
 title = {Monotone dynamical systems: an introduction to the theory of competitive and cooperative systems},
 fseries = {Mathematical Surveys and Monographs},
 series = {Math. Surv. Monogr.},
 issn = {0076-5376},
 volume = {41},
 isbn = {0-8218-0393-X},
 year = {1995},
 publisher = {Providence, RI: American Mathematical Society},
 language = {English},
 zbMATH = {758785},
 Zbl = {0821.34003}
}

\end{document}